\definecolor{orange}{RGB}{255,127,0}
\newcommand{\eqdef}{\stackrel{\text{def}}{=}}
\newcommand{\R}{\mathbb{R}}
\newcommand{\E}{\mathbf{E}}
\newcommand{\vc}[2]{#1^{(#2)}}
\newcommand{\m}[1]{~\mbox{#1}~}
\newcommand{\norm}[1]{\|#1\|}
\newtheorem{lemma}{Lemma}
\newtheorem{theorem}{Theorem}
\newtheorem{remark}{Remark}
\newtheorem{corollary}{Corollary}
\theoremstyle{plain}
\theoremstyle{definition}
\newtheorem{definition}{Definition}
\newcommand*{\starnr}{\stepcounter{equation}\tag{\theequation}}
\newcommand{\ttt}{^{(t)}}
\def\keyFont{\fontsize{8}{11}\helveticabold }
\def\firstAuthorLast{Xi He {et~al.}} 
\def\Authors{Xi He\,$^{1}$, Rachael Tappenden$^{2}$
and Martin Tak\'a\v{c}\,$^{1}$}
\begin{document}
\onecolumn
\firstpage{1}

\title[Dual Free Adaptive Minibatch SDCA for Empirical Risk Minimization]{Dual Free Adaptive Minibatch SDCA \\for Empirical Risk Minimization}

\author[\firstAuthorLast ]{\Authors} 
\address{} 
\correspondance{} 

\extraAuth{}

\maketitle

\begin{abstract}
In this paper we develop an adaptive dual free Stochastic Dual Coordinate Ascent (adfSDCA) algorithm for regularized empirical risk minimization problems. This is motivated by the recent work on dual free SDCA of \cite{shalev2016sdca}.
 The novelty of our approach is that the coordinates to update at each iteration are selected non-uniformly from an adaptive probability distribution, and this extends the previously mentioned work which only allowed for a uniform selection of ``dual" coordinates from a fixed probability distribution.

We describe an efficient iterative procedure for generating the non-uniform samples, where the scheme selects the coordinate with the greatest potential to decrease the sub-optimality of the current iterate.
We also propose a heuristic variant of adfSDCA that is more aggressive than the standard approach. Furthermore, in order to utilize multi-core machines we consider a mini-batch adfSDCA algorithm and develop complexity results that guarantee the algorithm's convergence. The work is concluded with several numerical experiments to demonstrate the practical benefits of the proposed approach.

\tiny
 \keyFont{ \section{Keywords:} SDCA, Importance Sampling, Non-uniform Sampling, Mini-batch, Adaptive} 
\end{abstract}

\section{Introduction}
\label{sec: Introduction}
In this work we study the $\ell_2$-regularized Empirical Risk Minimization (ERM) problem, which is widely used in the field of  machine learning. The problem can be stated as follows. Given training examples $(x_1,y_1), \dots,  (x_n,y_n) \in \R^d \times \R$, loss functions $\phi_1, \dots, \phi_n: \R \rightarrow \R$ and a regularization parameter $\lambda > 0$, $\ell_2$-regularized ERM is an optimization problem of the form
\begin{equation}\label{Prob: L2EMR}
\tag{P}
\min_{w\in \R^d} P(w) := \frac{1}{n}\sum_{i=1}^n\phi_i(w^Tx_i) + \frac{\lambda}{2}\norm{w}^2,
\end{equation}
where the first term in the objective function is a \emph{data fitting term} and the second is a \emph{regularization term} that prevents over-fitting.

Many algorithms have been proposed to solve problem~\eqref{Prob: L2EMR} over the past few years, including SGD, \cite{shalev2011pegasos},
SVRG and S2GD, \cite{johnson2013accelerating,nitanda2014stochastic,konevcny2014ms2gd} and SAG/SAGA,
\cite{schmidt2013minimizing,defazio2014saga,roux2012stochastic}. However, another very popular approach to solving $\ell_2$-regularized ERM problems is to consider the following dual formulation
\begin{equation}
\tag{D} \label{Prob:dual}
\max_{\alpha \in \R^n} D(\alpha) :=
-\frac1n \sum_{i=1}^n \phi^*_i(-\alpha_i) -\frac{\lambda}{2} \|\frac1{\lambda n} X^T \alpha\|^2,
\end{equation}
where $X^T=[x_1,\dots, x_n]\in \R^{d \times n}$ is the data matrix and $\phi_i^*$ denotes the convex conjugate of $\phi_i$. The structure of the dual formulation \eqref{Prob:dual} makes it well suited to a multicore or distributed computational setting, and several algorithms have been developed to take advantage of this including \cite{hsieh2008dual}
\cite{
takavc2013mini,
jaggi2014communication,
ma2015adding,
takavc2015distributed,
qu2015quartz,
csiba2015stochastic,
zhang2015communication}.

One of the most popular methods for solving \eqref{Prob:dual} is Stochastic Dual Coordinate Ascent (SDCA). The algorithm proceeds as follows. At iteration $t$ of SDCA a coordinate $i\in\{1, \dots, n\}$ is chosen uniformly at random and the current iterate $\vc{\alpha}{t}$ is updated to $\vc{\alpha}{t+1}:=\vc{\alpha}{t}+  \delta^* e_i$, where $\delta^* = \arg\max_{\delta\in \R} D(\vc{\alpha}{t}+ \delta e_i)$. Much research has focused on analysing the theoretical complexity of SDCA under various assumptions imposed on the functions $\phi_i^*$, including the pioneering work of Nesterov in \cite{nesterov2012efficiency} and others including \cite{richtarik2014iteration,
tappenden2015complexity,
necoara2013efficient,
necoara2013parallel,
liu2015asynchronous,
takavc2015distributed,
takavc2013mini}.

A modification that has led to improvements in the practical performance of SDCA is the use of \emph{importance sampling} when selecting the coordinate to update. That is, rather than using uniform probabilities,  instead coordinate $i$ is sampled with an arbitrary probability $p_i$, see for example \cite{zhao2014stochastic,csiba2015stochastic}.

In many cases algorithms that employ non-uniform coordinate sampling outperform na\"ive uniform selection, and in some cases help to decrease the number of iterations needed to achieve a desired accuracy by several fold.

{\bf Notation and Assumptions.}
In this work we use the notation $[n]\eqdef \{1,\dots,n\}$, as well as the following assumption. For all $i\in [n]$, the loss function $\phi_i$ is $\tilde L_i$-smooth with $\tilde L_i>0$, i.e., for any given $\beta, \delta \in \R$, we have
\begin{equation}
    | \phi_i'(\beta) - \phi_i'(\beta+\delta)| \leq \tilde L_i|\delta |.
\end{equation}
In addition, it is simple to observe that the function $\phi_i(x_i^T \cdot  ): \R^d \to \R$ is $L_i$ smooth, i.e., $\forall w,\bar w\in \R^d$ and for all $i \in [n]$ there exists a constant  $L_i\leq \|x_i\|^2 \tilde L_i $  such that
\begin{equation}\label{ass: smooth}
    \|\nabla\phi_i( x_i^T w)-\nabla\phi_i(x_i^T\bar{w})\| \leq L_i\norm{w-\bar{w}}.
\end{equation}
We will use the notation
\begin{equation}\label{eq:LLtilde}
  L=\max_{1\leq i\leq n} L_i, \qquad \text{and} \qquad \tilde{L} = \max_{1\leq i\leq n} \tilde{L}_i.
\end{equation}
Throughout this work we let $\R_+$ denote the set of nonnegative real numbers and we let $\R_+^n$ denote the set of $n$-dimensional vectors with all components being real and nonnegative.

\subsection{Contributions}
\label{ssec: Contributions}

In this section the main contributions of this paper are summarized (not in order of significance).

{\bf Adaptive SDCA.}
We modify the dual free SDCA algorithm proposed in \cite{DBLP:journals/corr/Shalev-Shwartz15} to allow for the adaptive adjustment of probabilities and a non-uniform selection of coordinates. Note that the method is dual free, and hence in contrast to classical SDCA, where the update is defined by maximizing the dual objective \eqref{Prob:dual}, here we define the update slightly differently (see Section \ref{sec:algorithm_adfSDCA} for details).

Allowing non-uniform selection of coordinates from an adaptive probability distribution leads  to improvements in practical performance and the algorithm achieves a better complexity bound than in \cite{DBLP:journals/corr/Shalev-Shwartz15}. We show that the error after $T$ iterations is decreased by factor of
$\prod_{t=1}^T(1-\theta\ttt)\geq (1-\theta^*)^T$ on average, where $\theta^*$ is an uniformly lower bound for all $\theta\ttt$.
 Here $1-\theta\ttt \in (0,1)$ is a parameter that depends on the current iterate $\vc{\alpha}{t}$ and the nonuniform probability distribution. By changing the coordinate selection strategy from uniform selection to adaptive, each $1-\theta\ttt$ becomes smaller, which leads to an improvement in the convergence rate.

{\bf Non-uniform sampling procedure.}
Rather than using a uniform sampling of coordinates, which is the commonly used approach, here we propose the use of non-uniform sampling from an adaptive probability distribution. With this novel sampling strategy, we are able to generate non-uniform non-overlapping and proper (see Section \ref{sec:mini_batch_adfsdca}) samplings for arbitrary marginal distributions under only one mild assumptions. Indeed, we show that without the assumption, there is no such non-uniform sampling strategy. We also extend our sampling strategy to allow the selection of mini-batches.

 {\bf Better convergence and complexity results.}
By utilizing an adaptive probabilities strategy, we can derive complexity results for our new algorithm that, for the case when every loss function is convex, depend only on the \emph{average} of the Lipschitz constants $L_i$. This improves upon the complexity theory developed in \cite{DBLP:journals/corr/Shalev-Shwartz15} (which uses a uniform sampling) and \cite{csiba2015primal} (which uses an arbitrary but fixed probability distribution), because the results in those works depend on the \emph{maximum} Lipschitz constant. Furthermore, even though adaptive probabilities are used here, we are still able to retain the very nice feature of the work in \cite{DBLP:journals/corr/Shalev-Shwartz15}, and show that the variance of the update naturally goes to zero as the iterates converge to the optimum without any additional computational effort or storage costs. Our adaptive probabilities SDCA method also comes with an improved bound on the variance of the update in terms of the sub-optimality of the current iterate.

 {\bf Practical aggressive variant.}
Following from the work of \cite{csiba2015stochastic}, we propose an efficient heuristic variant of adfSDCA. For adfSDCA the adaptive probabilities must be computed at every iteration (i.e., once a single coordinate has been selected), which can be computationally expensive. However, for our heuristic adfSDCA variant the (exact/true) adaptive probabilities are only computed once at the beginning of each epoch (where an epoch is one pass over the data/$n$ coordinate updates), and during that epoch, once a coordinate has been selected we simply reduce the probability associated with that coordinate so it is not selected again during that epoch. Intuitively this is reasonable because, after a coordinate has been updated the dual residue associated with that coordinate decreases and thus the probability of choosing this coordinate should also reduce. We show that in practice this heuristic adfSDCA variant converges and the computational effort required by this algorithm is lower than adfSDCA (see Sections~\ref{sec:heuristic_adfsdca} and \ref{sec:numerical_experiments}).

 {\bf Mini-batch variant.}
We extend the (serial) adfSDCA algorithm to incorporate a mini-batch scheme. The motivation for this approach is that there is a computational cost associated with generating the adaptive probabilities, so it is important to utilize them effectively. We develop a non-uniform mini-batch strategy that allows us to update multiple coordinates in one iteration, and the coordinates that are selected have high potential to decrease the sub-optimality of the current iterate. Further, we make use of ESO framework (Expected Separable Overapproximation) (see for example \cite{richtarik2012parallel}, \cite{qu2015quartz}) and present theoretical complexity results for mini-batch adfSDCA. In particular, for mini-batch adfSDCA used with batchsize $b$, we derive the optimal probabilities to use at each iteration, as well as the best step-size to use to guarantee speedup.

\subsection{Outline}
This paper is organized as follows. In Section~\ref{sec:algorithm_adfSDCA} we introduce our new Adaptive Dual Free SDCA algorithm (adfSDCA), and highlight its connection with a reduced variance SGD method. In Section~\ref{sec:convergence_Analysis} we provide theoretical convergence guarantees for adfSDCA in the case when all loss functions $\phi_i(\cdot)$ are convex, and also in the case when individual loss functions are allowed to be nonconvex but the average loss functions $\sum_{i=1}^n\phi_i(\cdot)$ is convex. Section~\ref{sec:heuristic_adfsdca} introduces a practical heuristic version of adfSDCA, and in Section~\ref{sec:mini_batch_adfsdca} we present a mini-batch adfSDCA algorithm and provide convergence guarantees for that method. Finally, we present the results of our numerical experiments in Section~\ref{sec:numerical_experiments}. Note that the proofs for all the theoretical results developed in this work are left to the appendix.

\section{The Adaptive Dual Free SDCA Algorithm} 
\label{sec:algorithm_adfSDCA}

In this section we describe the Adaptive Dual Free SDCA (adfSDCA) algorithm, which is motivated by the dual free SDCA algorithm proposed by \cite{DBLP:journals/corr/Shalev-Shwartz15}.
Note that in dual free SDCA two sequences of primal and dual iterates, $\{\vc{w}{t}\}_{t=0}^\infty$ and $\{\vc{\alpha}{t}\}_{t=0}^\infty$ respectively, are maintained. At every iteration of that algorithm, the variable updates are computed in such a way that the well known
primal-dual relational mapping holds; for every iteration $t$:
\begin{equation}\label{eq:walphaPDmap}
\vc{w}{t} = \frac{1}{\lambda n} {\sum}_{i=1}^n \vc{\alpha_i}{t} x_i.
\end{equation}
The dual residue is defined as follows.
\begin{definition}[Dual residue, \cite{csiba2015stochastic}]
    The dual residue $\kappa^{(t)} = (\kappa^{(t)}_1, \dots, \kappa^{(t)}_n)^T\in \R^n$ associated with $(w\ttt,\alpha\ttt)$ is given by:
    \begin{equation}\label{eq:dualresidue}
        \vc{\kappa_i}{t} \eqdef \vc{\alpha_i}{t} + \phi'_i(x_i^Tw^{(t)}).
    \end{equation}
\end{definition}

The Adaptive Dual Free SDCA algorithm is outlined in Algorithm~\ref{Alg: adfSDCA} and is described briefly now; a more detailed description (including a discussion of coordinate selection and how to generate appropriate selection rules) will follow.
An initial solution $\vc{\alpha}{0}$ is chosen, and then $\vc{w}{0}$ is defined via \eqref{eq:walphaPDmap}.
In each iteration of Algorithm~\ref{Alg: adfSDCA} the dual residue $\vc{\kappa}{t}$ is computed via \eqref{eq:dualresidue}, and this is used to generate a probability distribution $\vc{p}{t}$. Next, a coordinate $i \in [n]$ is selected (sampled) according to the generated probability distribution and a step is taken by updating the $i$th coordinate of $\alpha$ via
\begin{eqnarray}\label{alphaupdate}
\alpha_i^{(t+1)} = \alpha_i^{(t)} - \theta\ttt (p_i^{(t)})^{-1} \kappa^{(t)}_i.
\end{eqnarray}
Finally, the vector $w$ is also updated
\begin{eqnarray}\label{wupdate}
  w^{(t+1)} = w^{(t)} - \theta\ttt(n \lambda p_i^{(t)})^{-1} \kappa^{(t)}_ix_i,
\end{eqnarray}
and the process is repeated. Note that the updates to $\alpha$ and $w$ using the formulas \eqref{alphaupdate} and \eqref{wupdate} ensure that the equality \eqref{eq:walphaPDmap} is preserved.

Also note that the updates in \eqref{alphaupdate} and \eqref{wupdate} involve a step size parameter $\theta\ttt$, which will play an important role in our complexity results. The step size $\theta\ttt$ should be large so that good progress can be made, but it must also be small enough to ensure that the algorithm is guaranteed to converge. Indeed, in Section~\ref{sec:analysisCaseI} we will see that the choice of $\theta\ttt$ depends on the choice of probabilities used at iteration $t$, which in turn depend upon a particular function that is related to the suboptimality at iteration $t$.

\begin{algorithm}[ht]
    \caption{Adaptive Dual Free SDCA (adfSDCA)}
    \label{Alg: adfSDCA}
    \begin{algorithmic}[1]
        \STATE {\bf Input:} Data: $\{x_i, \phi_i\}_{i=1}^n$
        \STATE  {\bf Initialization:} Choose  $ \alpha^{(0)} \in \R^n$
        \STATE Set $w^{(0)} = \tfrac{1}{\lambda n}\sum_{i=1}^n \alpha_i^{(0)}x_i $
        \FOR {$t=0,1,2,\dots$}
        \STATE Calculate dual residual $\kappa^{(t)}_i = \phi'_i(x_i^Tw^{(t)}) + \alpha^{(t)}_i$, for all $i\in [n]$
        \STATE Generate adaptive probability distribution $p^{(t)} \sim \kappa^{(t)}$
        \STATE Sample coordinate $i$ according to $p^{(t)}$
        \STATE Set step-size $\theta\ttt \in (0, 1)$ as in \eqref{eq: optimal theta}
        \STATE {\bf Update:} $\alpha_i^{(t+1)} = \alpha_i^{(t)} - \theta\ttt (p_i^{(t)})^{-1} \kappa^{(t)}_i$
        \STATE \textbf{Update: }$w^{(t+1)} = w^{(t)} - \theta\ttt(n \lambda p_i^{(t)})^{-1} \kappa^{(t)}_ix_i$\label{step:wupdate}
        \ENDFOR
    \end{algorithmic}

\end{algorithm}

The dual residue $\kappa\ttt$ is informative and provides a useful way of monitoring suboptimality of the current solution $(w\ttt,\alpha\ttt)$. In particular, note that if $\kappa_i = 0$ for some coordinate $i$, then by \eqref{eq:dualresidue} $\alpha_i=-\phi'_i(w^Tx_i)$, and substituting $\kappa_i$ into \eqref{alphaupdate} and \eqref{wupdate} shows that $\alpha_i^{(t+1)} \gets \alpha_i\ttt$ and $w_i^{(t+1)} \gets w\ttt$, i.e., $\alpha$ and $w$ remain unchanged in that iteration. On the other hand, a large value of $|\kappa_i|$ (at some iteration $t$) indicates that a large step will be taken, which is anticipated to lead to good progress in terms of improvement in sub-optimality of current solution.

The probability distributions used in Algorithm~\ref{Alg: adfSDCA} adhere to the following definition.
\begin{definition} (Coherence, \cite{csiba2015stochastic})\label{Def: coherence}
Probability vector  $p\in \R^n$ is coherent with dual residue $\kappa\in \R^n$ if for any index $i$ in the support set of $\kappa$, denoted by $I_ \kappa := \{i \in [n]: \kappa_i \neq 0\}$, we have $p_i >0$.
When $i\notin I_ \kappa$ then $p_i = 0$. We use $p\sim \kappa$ to represent this coherent relation.
\end{definition}

\subsection{Adaptive dual free SDCA as a reduced variance SGD method.}
Reduced variance SGD methods have became very popular in the past few years, see for example
\cite{konevcny2013semi,johnson2013accelerating,roux2012stochastic,defazio2014saga}.
It is show in  \cite{DBLP:journals/corr/Shalev-Shwartz15}
that uniform dual free SDCA is an instance of a reduced variance SGD algorithm
(the variance of the stochastic gradient can be bounded by some measure of sub-optimality of the current iterate) and a similar result applies to adfSDCA in Algorithm~\ref{Alg: adfSDCA}. In particular, note that conditioned on $w^{(t-1)}$, we have
\begin{align}
\E[w^{(t)}| \alpha^{(t-1)}]
    &\overset{\eqref{wupdate}}{=} w^{(t-1)} - \frac{\theta^{(t-1)}}{\lambda}\sum_{i=1}^n\frac{ p_i}{n  p_i}   \left(\big(\nabla\phi_i(x_i^Tw^{(t-1)})+\alpha_i^{(t-1)}\big) x_i\right)\notag\\
    &\overset{\eqref{eq:walphaPDmap}}{=} w^{(t-1)} - \frac{\theta^{(t-1)}}{\lambda}\Big(\nabla\Big(\frac{1}{n}\sum_{i=1}^n\phi_i(x_i^Tw^{(t-1)})\Big) + \lambda w^{(t-1)}\Big) \notag\\
    &\overset{\eqref{Prob: L2EMR}}{=} w^{(t-1)} - \frac{\theta^{(t-1)}}{\lambda}\nabla P(w^{(t-1)}) .\label{eq:Expw}
\end{align}
Combining \eqref{wupdate} and \eqref{eq:Expw} gives
\begin{equation}\label{eq: nabla(P)}
    \E\left[\frac{1}{np_i}\kappa_i^{(t)}x_i
| \alpha^{(t-1)}
    \right] = \nabla P(w^{(t)}),
\end{equation}
which implies that $\displaystyle \tfrac{1}{np_i}\kappa_i^{(t)}x_i$ is an unbiased estimator of $\nabla P(w^{(t)})$. Therefore, Algorithm \ref{Alg: adfSDCA} is eventually a variant of the Stochastic Gradient Descent method. However, we can prove (see later) that the variance of the update goes to zero as the iterates converge to an optimum, which is not true for vanilla Stochastic Gradient Descent.

\section{Convergence Analysis}\label{sec:convergence_Analysis}
In this section we state the main convergence results for adfSDCA (Algorithm~\ref{Alg: adfSDCA}). The analysis is broken into two cases. In the first case it is assumed that each of the loss functions $\phi_i$ is convex. In the second case this assumption is relaxed slightly and it is only assumed that the average of the $\phi_i$'s is convex, i.e., individual functions $\phi_i(\cdot)$ for some (several) $i \in[n]$ are allowed to be nonconvex, as long as $\frac1n\sum_{j=1}^n \phi_j(\cdot)$ is convex. The proofs for all the results in this section can be found in the Appendix.

\subsection{Case I: All loss functions are convex}\label{sec:analysisCaseI}

Here we assume that $\phi_i$ is convex for all $i\in [n]$. Define the following parameter
\begin{equation}\label{eq:gamma}
  \gamma \eqdef \lambda\tilde{L},
\end{equation}
where $\tilde L$ is given in \eqref{eq:LLtilde}. It will also be convenient to define the following potential function. For all iterations $t\geq 0$,
\begin{equation}\label{eq:D}
  D\ttt \eqdef\tfrac{1}{n}\norm{\alpha\ttt-\alpha^*}^2 + \gamma  \norm{w\ttt-w^*}^2.
\end{equation}
The potential function \eqref{eq:D} plays a central role in the convergence theory presented in this work. It measures the distance from the optimum in both the primal and (pseudo) dual variables. Thus, our algorithm will generate iterates that reduce this suboptimality and therefore push the potential function toward zero.

Also define
\begin{equation}\label{eq:viQ}
  v_i \eqdef \norm{x_i}^2 \text{ for all } i \in [n].
\end{equation}

We have the following result.
\begin{lemma}\label{thm: seperate convex}


Let $\tilde{L}$, $\kappa_i^{(t)}$, $\gamma$, $D\ttt$, and $v_i$ be as defined in \eqref{eq:LLtilde}, \eqref{eq:dualresidue}, \eqref{eq:gamma}, \eqref{eq:D} and \eqref{eq:viQ}, respectively. Suppose that $\phi_i$ is $\tilde{L}$-smooth and convex for all $i \in [n]$ and let $\theta\in(0,1)$. Then at every iteration $t\geq 0$ of Algorithm~\ref{Alg: adfSDCA}, a probability distribution $p\ttt$ that satisfies Definition~\ref{Def: coherence} is generated and
\begin{align}\label{eq: separate convex conclusion}
\E\big[D^{(t+1)} | \alpha\ttt \big] - (1- \theta)D\ttt  \leq \sum_{i=1}^n\left(-\frac{\theta}{n}\bigg(1- \frac{\theta}{p_i\ttt}\bigg)+\frac{\theta^2v_i \gamma}{n^2 \lambda^2 p_i\ttt}\right)(\kappa_i^{(t)})^2.
\end{align}
\end{lemma}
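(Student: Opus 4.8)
The plan is to expand the potential $D^{(t+1)}$ after one coordinate update, take the conditional expectation over the sampled coordinate, and then reduce the resulting bound to a purely deterministic inequality that is closed using convexity and smoothness of the losses. To lighten notation I drop the superscript $t$ and write $\alpha,w,\kappa,p$ for the iterates at step $t$. First I fix the optimality conventions: let $w^*$ minimize $P$ and set $\alpha_i^*\eqdef-\phi_i'(x_i^Tw^*)$; since $\nabla P(w^*)=0$, this yields $w^*=\tfrac1{\lambda n}\sum_{i=1}^n\alpha_i^*x_i$, so the primal-dual relation \eqref{eq:walphaPDmap} holds at the optimum as well.

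Conditioned on $\alpha$ (equivalently on $(\alpha,w)$), if coordinate $i$ is sampled then only $\alpha_i$ and the $i$th summand of $w$ change, so \eqref{alphaupdate} and \eqref{wupdate} give
\[
\norm{\alpha^{(t+1)}-\alpha^*}^2=\norm{\alpha-\alpha^*}^2-\tfrac{2\theta}{p_i}\kappa_i(\alpha_i-\alpha_i^*)+\tfrac{\theta^2}{p_i^2}\kappa_i^2,
\]
\[
\norm{w^{(t+1)}-w^*}^2=\norm{w-w^*}^2-\tfrac{2\theta}{n\lambda p_i}\kappa_i\ve{x_i}{w-w^*}+\tfrac{\theta^2 v_i}{n^2\lambda^2 p_i^2}\kappa_i^2.
\]
Taking expectation over $i\sim p$ --- coherence of $p$ with $\kappa$ guarantees $p_i>0$ on $I_\kappa$ and makes every term with $\kappa_i=0$ vanish, so all divisions are legitimate --- the sampling probabilities cancel the $p_i^{-1}$ weights in the linear terms, and I obtain a closed-form expression for $\E[D^{(t+1)}\mid\alpha]$ involving $D$, the two ``cross'' sums $\sum_i\kappa_i(\alpha_i-\alpha_i^*)$ and $\sum_i\kappa_i\ve{x_i}{w-w^*}$, and $\tfrac{\theta^2}{n}\sum_i p_i^{-1}\kappa_i^2\big(1+\tfrac{\gamma v_i}{n\lambda^2}\big)$.

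Next I subtract $(1-\theta)D$. The key observation is that the $\theta^2$-terms produced this way coincide \emph{exactly} with the $\theta^2$-terms on the right-hand side of \eqref{eq: separate convex conclusion}; hence, after dividing by $\theta/n>0$ and using $\gamma/\lambda=\tilde L$, the claim is equivalent to the deterministic inequality
\[
\sum_{i=1}^n\big(\kappa_i-(\alpha_i-\alpha_i^*)\big)^2+n\gamma\norm{w-w^*}^2-2\tilde L\sum_{i=1}^n\kappa_i\ve{x_i}{w-w^*}\le0 .
\]
Put $b_i\eqdef\phi_i'(x_i^Tw)-\phi_i'(x_i^Tw^*)$ and $c_i\eqdef\ve{x_i}{w-w^*}$. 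Since $\kappa_i=\alpha_i+\phi_i'(x_i^Tw)$ and $\alpha_i^*=-\phi_i'(x_i^Tw^*)$, we have $\kappa_i-(\alpha_i-\alpha_i^*)=b_i$, so the inequality becomes $\sum_i b_i^2+n\gamma\norm{w-w^*}^2-2\tilde L\sum_i\kappa_i c_i\le0$. I treat the last sum via the primal-dual map at $w$ and at $w^*$: $\sum_i(\alpha_i-\alpha_i^*)x_i=\lambda n(w-w^*)$, hence $\sum_i\kappa_i x_i=\lambda n(w-w^*)+\sum_i b_i x_i$ and $\sum_i\kappa_i c_i=\lambda n\norm{w-w^*}^2+\sum_i b_i c_i$. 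Because $\gamma=\lambda\tilde L$, the $\norm{w-w^*}^2$ contributions combine to $-n\lambda\tilde L\norm{w-w^*}^2\le0$, so it suffices to prove $\sum_i b_i^2\le2\tilde L\sum_i b_i c_i$. This is co-coercivity of each convex $\tilde L$-smooth $\phi_i$: with $u=x_i^Tw$ and $v=x_i^Tw^*$, the estimate $(\phi_i'(u)-\phi_i'(v))(u-v)\ge\tfrac1{\tilde L}(\phi_i'(u)-\phi_i'(v))^2$ gives $b_i c_i\ge\tfrac1{\tilde L}b_i^2$ for every $i$; summing, $2\tilde L\sum_i b_i c_i\ge2\sum_i b_i^2\ge\sum_i b_i^2$, which finishes the argument.

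I expect the main obstacle to be the cross-term $\sum_i\kappa_i\ve{x_i}{w-w^*}$: it admits no coordinatewise bound and must instead be expanded through the primal-dual map, after which one sees that the precise scaling $\gamma=\lambda\tilde L$ built into the potential is exactly what forces the leftover $\norm{w-w^*}^2$ contribution to be nonpositive. Everything else is bookkeeping plus the standard co-coercivity estimate --- the single place where convexity of each individual $\phi_i$ (the Case~I hypothesis) enters, and which is precisely what gets relaxed in Case~II.
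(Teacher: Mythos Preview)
Your argument is correct. The reduction to the deterministic inequality is clean, and the final step via co-coercivity of each $\phi_i'$ is exactly what is needed.

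The route, however, differs from the paper's. After computing $\E[A^{(t+1)}\mid\alpha\ttt]$ and $\E[B^{(t+1)}\mid\alpha\ttt]$, the paper does \emph{not} expand $\sum_i\kappa_i\ve{x_i}{w-w^*}$ through the primal--dual map. Instead it groups the ``non-$\kappa$'' terms as
\[
\mathbf{C_1}=\tfrac{\theta}{n}\sum_i\big(u_i\ttt-\alpha_i^*\big)^2-\tfrac{2\gamma\theta}{\lambda}\nabla P(w\ttt)^T(w\ttt-w^*)
\]
and bounds the first sum via a self-bounding/descent lemma (each $\phi_i$ convex and $\tilde L$-smooth implies $\sum_i(u_i\ttt-\alpha_i^*)^2\le 2\tilde L\,n\big(P(w\ttt)-P(w^*)-\tfrac{\lambda}{2}\|w\ttt-w^*\|^2\big)$), and then closes with the convexity inequality $P(w\ttt)-P(w^*)\le\nabla P(w\ttt)^T(w\ttt-w^*)$. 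So the paper passes through function values of $P$ and uses global convexity of $P$, whereas you stay entirely at the gradient level and invoke co-coercivity of each individual $\phi_i$. Your approach is more elementary and avoids the auxiliary lemma; the paper's approach makes explicit the role of the suboptimality $P(w\ttt)-P(w^*)$, which dovetails with the later complexity statements, and its decomposition $\E[A^{(t+1)}\mid\alpha\ttt]-A\ttt=-\theta A\ttt+\cdots$ is reused verbatim in the Case~II and mini-batch proofs.
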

Note that if the right hand side of \eqref{eq: separate convex conclusion} is negative, then the potential function decreases (in expectation) in iteration $t$:
\begin{align}\label{eq:Dreduce}
    \E\big[D^{(t+1)} | \alpha\ttt\big] \leq (1- \theta)D\ttt.
\end{align}
The purpose of Algorithm~\ref{Alg: adfSDCA} is to generate iterates $(w\ttt,\alpha\ttt)$ such that the above holds. To guarantee negativity of the right hand term in \eqref{eq: separate convex conclusion}, or equivalently, to ensure that \eqref{eq:Dreduce} holds, consider the parameter $\theta$. Specifically, any $\theta$ that is less than the function $\Theta(\cdot, \cdot): \R_+^n \times \R_+^n \rightarrow \R$ defined as
\begin{equation}\label{eq:Theta}
    \Theta(\kappa, p) \eqdef \frac{n\lambda^2\sum_{i\in I_ \kappa}\kappa_i^2}{\sum_{i\in I_ \kappa}(n\lambda^2+v_i\gamma)p_i^{-1}\kappa_i^2},
\end{equation}
will ensure negativity of \eqref{eq: separate convex conclusion}. Moreover, the larger the value of $\theta$, the better progress Algorithm~\ref{Alg: adfSDCA} will make in terms of the reduction in $D\ttt$. The function $\Theta$ depends on the dual residue $\kappa$ and the probability distribution $p$. Maximizing this function w.r.t. $p$ will ensure that the largest possible value of $\theta$ can be used in Algorithm~\ref{Alg: adfSDCA}. Thus, we consider the following optimization problem:
\begin{align}\label{Prob: to derive best probabilities}
\max _{p\in \R^n_+, \sum_{i\in I_ \kappa} p_i=1} \quad&\Theta(\kappa, p).
\end{align}
One may naturally be wary of the additional computational cost incurred by solving the optimization problem in \eqref{Prob: to derive best probabilities} at every iteration. Fortunately, it turns out that there is an (inexpensive) closed form solution, as shown by the following Lemma.
\begin{lemma}\label{lem: optimal probabilities}
Let $\Theta(\kappa, p)$ be defined in \eqref{eq:Theta}. The optimal solution $p^*(\kappa)$ of \eqref{Prob: to derive best probabilities} is
\begin{equation}\label{eq: optimal probabilities}
p^*_i(\kappa) = \frac{\sqrt{v_i \gamma+n\lambda^2}|\kappa_i|}{\sum_{j\in I_ \kappa}\sqrt{v_j \gamma+n\lambda^2}|\kappa_j|}, \qquad \text{for all }\; i=1,\dots,n.
\end{equation}

The corresponding $\theta$ by using the optimal solution $p^*$ is
   \begin{align}\label{eq: optimal theta}
    \theta=\Theta(\kappa, p^*) =  \frac{n \lambda^2 \sum_{i\in I_ \kappa} \kappa_i^2}{(\sum_{i\in I_ \kappa}\sqrt{v_i \gamma + n \lambda^2}|\kappa_i|)^2}.
\end{align}
\end{lemma}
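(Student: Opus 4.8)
The plan is to exploit the fact that the numerator of $\Theta(\kappa,p)$ does not depend on $p$. Consequently, maximizing $\Theta(\kappa,p)$ over the simplex $\{p\in\R_+^n : \sum_{i\in I_\kappa}p_i = 1\}$ is equivalent to \emph{minimizing} the denominator
\[
g(p)\eqdef \sum_{i\in I_\kappa}(n\lambda^2+v_i\gamma)\,p_i^{-1}\kappa_i^2
\]
over the same set. (The coordinates $i\notin I_\kappa$ contribute nothing and are set to $0$ in accordance with Definition~\ref{Def: coherence}; note also that $g(p)>0$ whenever $\kappa\neq 0$, so $\Theta$ is well defined and strictly decreasing in $g$.) Writing $a_i\eqdef(n\lambda^2+v_i\gamma)\kappa_i^2>0$ for $i\in I_\kappa$, this is the classical problem $\min\{\sum_{i\in I_\kappa}a_i/p_i : p_i>0,\ \sum_{i\in I_\kappa}p_i=1\}$.

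First I would establish the sharp lower bound $g(p)\ge\big(\sum_{i\in I_\kappa}\sqrt{a_i}\big)^2$ via the Cauchy--Schwarz inequality:
\[
\Big(\sum_{i\in I_\kappa}\sqrt{a_i}\Big)^2
=\Big(\sum_{i\in I_\kappa}\sqrt{a_i/p_i}\,\sqrt{p_i}\Big)^2
\le\Big(\sum_{i\in I_\kappa}\frac{a_i}{p_i}\Big)\Big(\sum_{i\in I_\kappa}p_i\Big)
= g(p),
\]
using $\sum_{i\in I_\kappa}p_i=1$. Equality holds if and only if the vectors $(\sqrt{a_i/p_i})_i$ and $(\sqrt{p_i})_i$ are proportional, i.e. $a_i/p_i^2$ is constant in $i$, i.e. $p_i\propto\sqrt{a_i}$; combined with the normalization $\sum_{i\in I_\kappa}p_i=1$ this forces $p_i^*=\sqrt{a_i}\big/\sum_{j\in I_\kappa}\sqrt{a_j}$. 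Substituting $\sqrt{a_i}=\sqrt{n\lambda^2+v_i\gamma}\,|\kappa_i|$ gives exactly \eqref{eq: optimal probabilities}, and one checks trivially that $p^*$ is nonnegative, sums to one, and is coherent with $\kappa$.

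Finally I would substitute $p=p^*$ into $\Theta$: since $g(p^*)=\big(\sum_{i\in I_\kappa}\sqrt{a_i}\big)^2=\big(\sum_{i\in I_\kappa}\sqrt{n\lambda^2+v_i\gamma}\,|\kappa_i|\big)^2$, we obtain
\[
\Theta(\kappa,p^*)=\frac{n\lambda^2\sum_{i\in I_\kappa}\kappa_i^2}{\big(\sum_{i\in I_\kappa}\sqrt{v_i\gamma+n\lambda^2}\,|\kappa_i|\big)^2},
\]
which is \eqref{eq: optimal theta}.

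There is no deep obstacle here; the only points requiring care are (a) the reduction to minimizing $g$, which needs the observation that the denominator is strictly positive and that $\Theta$ is monotone in it, and (b) the behaviour on the boundary of the simplex, where some $p_i=0$ would send $g(p)$ to $+\infty$ (and violate coherence anyway), so the minimizer lies in the relative interior and is captured by the Cauchy--Schwarz equality condition. If one prefers, the same result follows from a KKT/Lagrangian argument on the convex function $g$: stationarity of $\sum_{i\in I_\kappa}a_i/p_i-\mu\big(\sum_{i\in I_\kappa}p_i-1\big)$ yields $a_i/p_i^2=\mu$, hence $p_i=\sqrt{a_i/\mu}$, and $\sum_{i\in I_\kappa}p_i=1$ determines $\mu$; convexity of $g$ on the simplex then certifies global optimality. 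I would present the Cauchy--Schwarz version as it is the most concise.
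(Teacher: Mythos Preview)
Your proof is correct. The paper's own proof proceeds via the KKT conditions of \eqref{Prob: to derive best probabilities}: it writes down the stationarity condition with a Lagrange multiplier $\mu$, observes that this forces $p_i/p_j = \sqrt{n\lambda^2+v_i\gamma}\,|\kappa_i|\big/\sqrt{n\lambda^2+v_j\gamma}\,|\kappa_j|$, and then normalizes using $\sum_{i\in I_\kappa}p_i=1$. Your primary route is different: you first reduce to minimizing the denominator $g(p)$ and then apply Cauchy--Schwarz to obtain both the sharp lower bound and the equality case in one stroke. The Cauchy--Schwarz argument is slightly more self-contained, since it delivers global optimality directly without needing to appeal separately to convexity of $g$ to certify that the KKT point is a minimizer (a step the paper leaves implicit). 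You do mention the Lagrangian derivation as an alternative at the end, which is essentially the paper's argument; either version is perfectly acceptable here.
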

\begin{proof}
  This can be verified by deriving the KKT conditions of the optimization problem in \eqref{Prob: to derive best probabilities}. The details are moved to Appendix for brevity.
 \end{proof}

The results in \cite{csiba2015primal} are weaker because they require a fixed sampling distribution $p$ throughout all iterations. Here we allow adaptive sampling probabilities as in \eqref{eq: optimal probabilities}, which enables the algorithm to utilize the data information more effectively, and hence we have a better convergence rate. Furthermore, the optimal probabilities found in \cite{csiba2015stochastic} can be only applied to a quadratic loss function, whereas our results are more general because the optimal probabilities in \eqref{eq: optimal probabilities} can used whenever the loss functions are convex, or when individual loss functions are non-convex but the average of the loss functions is convex.

Before proceeding with the convergence theory we define several constants. Let
\begin{equation}\label{eq:C0}
  C_0 \eqdef \tfrac{1}{n}\norm{\alpha^{(0)}-\alpha^*}^2 + \gamma  \norm{w^{(0)}-w^*}^2,
\end{equation}
where $\gamma$ is defined in \eqref{eq:gamma}. Note that $C_0$ in \eqref{eq:C0} is equivalent to the value of the potential function \eqref{eq:D} at iteration $t=0$, i.e., $C_0 \equiv D^{(0)}$. Moreover, let
\begin{equation}\label{eq:MQ}
         M\eqdef Q\Big(1+ \frac{\gamma Q}{\lambda^2 n}\Big) \quad\mbox{ where }\quad Q\eqdef\frac{1}{n}\sum_{i=1}^n{\norm{x_i}^2}\overset{\eqref{eq:viQ}}{=}\frac{1}{n}\sum_{i=1}^n v_i.
     \end{equation}
Now we have the following theorem.
\begin{theorem}\label{cor: all convex}
Let $\tilde{L}$, $\kappa_i^{(t)}$, $\gamma$, $D\ttt$, $v_i$, $C_0$ and $Q$ be as defined in \eqref{eq:LLtilde}, \eqref{eq:dualresidue}, \eqref{eq:gamma}, \eqref{eq:D}, \eqref{eq:viQ}, \eqref{eq:C0} and \eqref{eq:MQ}, respectively. Suppose that $\phi_i$ is $\tilde{L}$-smooth and convex for all $i \in [n]$, let $\theta\ttt\in(0,1)$ be decided by \eqref{eq: optimal theta} for all $t\geq 0$
 and let $p^*$ be defined via \eqref{eq: optimal probabilities}. Then, setting $p^{(t)} = p^*$ at every iteration $t\geq 0$ of Algorithm~\ref{Alg: adfSDCA}, gives
\begin{align}
    \E[D^{(t+1)}| \alpha\ttt] \leq (1- \theta^*)D\ttt,
\end{align}
where
\begin{equation}\label{eq:thetastar}
  \theta^* \eqdef \frac{n \lambda^2}{\sum_{i=1}^n(v_i \gamma+n\lambda^2)} \leq \theta^{(t)}.
\end{equation}
Moreover, for $\epsilon>0$, if
\begin{equation}\label{eq:Tconvex}
    T\geq \bigg(n+\frac{\tilde LQ}{\lambda}\bigg)\log\left(\frac{(\lambda + L)C_0}{2\lambda \tilde{L}\epsilon}\right),
\end{equation}
then $\E[P(w^{(T)})-P(w^*)]\leq \epsilon$.
\end{theorem}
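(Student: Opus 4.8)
The plan is to chain together the single–step inequality of Lemma~\ref{thm: seperate convex}, the optimal step–size/probability pair of Lemma~\ref{lem: optimal probabilities}, and a standard smoothness argument converting a bound on the potential $D\ttt$ into a bound on $P(w\ttt)-P(w^*)$.

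\textbf{Step 1 (exact one–step contraction).} Fix an iteration $t$ and condition on $\alpha\ttt$; then $w\ttt$, hence $\kappa\ttt$, hence $\theta\ttt=\Theta(\kappa\ttt,p^*)$, are all deterministic, so Lemma~\ref{thm: seperate convex} applies with $\theta=\theta\ttt$ and $p\ttt=p^*$. Expanding the bracket in \eqref{eq: separate convex conclusion} and dividing by $\theta>0$ shows that the right–hand side is $\le 0$ if and only if $\theta\le\Theta(\kappa\ttt,p\ttt)$, with equality (right–hand side $=0$) exactly when $\theta=\Theta(\kappa\ttt,p\ttt)$; this is just the definition \eqref{eq:Theta}. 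Hence $\E[D^{(t+1)}\mid\alpha\ttt]\le(1-\theta\ttt)D\ttt$. One also checks $\theta\ttt<1$: since $\sqrt{v_i\gamma+n\lambda^2}\ge\sqrt{n}\,\lambda$ and $\sum_{i}|\kappa_i|\ge(\sum_i\kappa_i^2)^{1/2}$, the denominator in \eqref{eq: optimal theta} is at least $n\lambda^2\sum_i\kappa_i^2$, so $\theta\ttt\in(0,1]$ and Lemma~\ref{thm: seperate convex} is legitimately invoked.

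\textbf{Step 2 (uniform lower bound $\theta\ttt\ge\theta^*$).} Apply Cauchy--Schwarz to $(\sqrt{v_i\gamma+n\lambda^2})_{i\in I_\kappa}$ and $(|\kappa_i|)_{i\in I_\kappa}$ to get $\big(\sum_{i\in I_\kappa}\sqrt{v_i\gamma+n\lambda^2}\,|\kappa_i|\big)^2\le\big(\sum_{i\in I_\kappa}(v_i\gamma+n\lambda^2)\big)\big(\sum_{i\in I_\kappa}\kappa_i^2\big)\le\big(\sum_{i=1}^n(v_i\gamma+n\lambda^2)\big)\big(\sum_{i\in I_\kappa}\kappa_i^2\big)$; substituting into \eqref{eq: optimal theta} yields $\theta\ttt\ge n\lambda^2/\sum_{i=1}^n(v_i\gamma+n\lambda^2)=\theta^*$. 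Since $D\ttt\ge0$, Step 1 gives $\E[D^{(t+1)}\mid\alpha\ttt]\le(1-\theta^*)D\ttt$, the first claim. Taking total expectations and unrolling from $t=0$ gives $\E[D^{(T)}]\le(1-\theta^*)^T D^{(0)}=(1-\theta^*)^T C_0\le e^{-\theta^* T}C_0$, and a direct computation with $\gamma=\lambda\tilde L$ and $Q=\tfrac1n\sum_i v_i$ shows $1/\theta^*=n+\tilde L Q/\lambda$.

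\textbf{Step 3 (from $D^{(T)}$ to primal suboptimality).} The objective $P$ is $(\lambda+L)$–smooth (the regularizer contributes $\lambda$ and, by \eqref{ass: smooth}, the data term $\tfrac1n\sum_i\phi_i(x_i^T\cdot)$ is $L$–smooth), and $\nabla P(w^*)=0$, so $P(w^{(T)})-P(w^*)\le\tfrac{\lambda+L}{2}\|w^{(T)}-w^*\|^2$; by the definition \eqref{eq:D}, $\|w^{(T)}-w^*\|^2\le D^{(T)}/\gamma$, hence $\E[P(w^{(T)})-P(w^*)]\le\tfrac{\lambda+L}{2\gamma}\E[D^{(T)}]\le\tfrac{(\lambda+L)C_0}{2\lambda\tilde L}e^{-\theta^* T}$. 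Requiring the last quantity to be $\le\epsilon$ is equivalent to $T\ge\tfrac{1}{\theta^*}\log\tfrac{(\lambda+L)C_0}{2\lambda\tilde L\epsilon}=(n+\tilde L Q/\lambda)\log\tfrac{(\lambda+L)C_0}{2\lambda\tilde L\epsilon}$, which is \eqref{eq:Tconvex}.

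\textbf{Main obstacle.} The argument is essentially bookkeeping once Lemmas~\ref{thm: seperate convex} and~\ref{lem: optimal probabilities} are in hand. The points needing the most care are the Cauchy--Schwarz estimate in Step 2 (one must enlarge the active index set $I_\kappa$ to $[n]$ at the right moment so the bound on $\theta\ttt$ is genuinely uniform in $t$) and pinning down the smoothness constant $\lambda+L$ of $P$ in Step 3, since using a looser constant there would weaken the logarithmic factor in \eqref{eq:Tconvex}.
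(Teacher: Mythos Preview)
Your proof is correct and follows essentially the same route as the paper's: Cauchy--Schwarz on \eqref{eq: optimal theta} to get $\theta\ttt\ge\theta^*$, combine with Lemma~\ref{thm: seperate convex} to obtain the one-step contraction, unroll, and then convert $D^{(T)}$ to primal suboptimality via $(\lambda+L)$-smoothness of $P$ and $\gamma=\lambda\tilde L$. You add a couple of details the paper glosses over (the check $\theta\ttt\le 1$ and the explicit enlargement $I_\kappa\to[n]$ in the Cauchy--Schwarz step), but the structure and key ingredients are identical.
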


Similar to \cite{DBLP:journals/corr/Shalev-Shwartz15}, we have the following corollary which bounds the quantity $\E[ \norm{\tfrac{1}{np_i} \vc{\kappa_i}{t}x_i}^2 ] $ in terms of the sub-optimality of the points $\vc{\alpha}{t}$ and $w^{(t)}$ by using optimal probabilities.
\begin{corollary}\label{variance reduction}
    Let the conditions of Theorem~\ref{cor: all convex} hold. Then at every iteration $t\geq 0$ of Algorithm~\ref{Alg: adfSDCA},
\begin{equation*}
             \E\left[ \left\|\frac{\vc{\kappa_i}{t}x_i}{np_i}\right\|^2 |\alpha^{(t-1)}\right] \leq 2M(\E[\norm{\alpha^{(t)}-\alpha^*}^2|\alpha^{(t-1)}] + L\E[\norm{w^{(t)}-w^*}^2|\alpha^{(t-1)}]).
\end{equation*}
\end{corollary}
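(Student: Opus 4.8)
The plan is to estimate $\E\big[\|\tfrac{1}{np_i}\vc{\kappa_i}{t}x_i\|^2\mid\alpha^{(t-1)}\big]$ by first taking the conditional expectation over the random coordinate $i$ drawn from $p^* = p^{*}(\kappa\ttt)$ and then bounding the resulting quantity in terms of $\|\alpha\ttt-\alpha^*\|^2$ and $\|w\ttt-w^*\|^2$. First I would write out the expectation explicitly: since coordinate $i$ is chosen with probability $p_i^*$,
\[
\E\left[\left\|\frac{\vc{\kappa_i}{t}x_i}{np_i^*}\right\|^2\;\Big|\;\alpha^{(t-1)}\right]
= \sum_{i\in I_{\kappa\ttt}} p_i^* \cdot \frac{(\kappa_i\ttt)^2\|x_i\|^2}{n^2 (p_i^*)^2}
= \frac{1}{n^2}\sum_{i\in I_{\kappa\ttt}} \frac{v_i (\kappa_i\ttt)^2}{p_i^*}.
\]
Substituting the closed form \eqref{eq: optimal probabilities} for $p_i^*$, the factor $1/p_i^*$ contributes $\big(\sum_{j\in I_{\kappa\ttt}}\sqrt{v_j\gamma+n\lambda^2}\,|\kappa_j\ttt|\big)\big/\big(\sqrt{v_i\gamma+n\lambda^2}\,|\kappa_i\ttt|\big)$, so the sum telescopes into
\[
\frac{1}{n^2}\left(\sum_{i\in I_{\kappa\ttt}} \frac{v_i|\kappa_i\ttt|}{\sqrt{v_i\gamma+n\lambda^2}}\right)\left(\sum_{j\in I_{\kappa\ttt}}\sqrt{v_j\gamma+n\lambda^2}\,|\kappa_j\ttt|\right).
\]

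Next I would bound each of the two factors. In the first factor, $v_i/\sqrt{v_i\gamma+n\lambda^2}\le v_i/\sqrt{n\lambda^2} = v_i/(\sqrt{n}\,\lambda)$, which after using $v_i\le nQ$ crudely, or more carefully $v_i = \|x_i\|^2$ together with the definition of $Q$, lets me pull out the right scale. In the second factor I would use $\sqrt{v_j\gamma+n\lambda^2}\le \sqrt{v_j\gamma}+\sqrt{n}\,\lambda$ (subadditivity of the square root) to split it into a $\gamma$-weighted piece and a $\lambda$-weighted piece. The essential point is then to convert $\sum_{i}|\kappa_i\ttt|\cdot(\text{something}_i)$ into a genuine squared-norm bound: using Cauchy–Schwarz, $\sum_i a_i |\kappa_i\ttt| \le \big(\sum_i a_i^2\big)^{1/2}\|\kappa\ttt\|$, and then invoking the identity $\kappa_i\ttt = \alpha_i\ttt + \phi_i'(x_i^T w\ttt)$ together with the optimality characterization $\alpha_i^* = -\phi_i'(x_i^T w^*)$ (so $\kappa^* = 0$) and the $L_i$-smoothness \eqref{ass: smooth} of $\phi_i(x_i^T\cdot)$, I get $\|\kappa\ttt\|^2 = \sum_i(\alpha_i\ttt - \alpha_i^* + \phi_i'(x_i^Tw\ttt)-\phi_i'(x_i^Tw^*))^2 \le 2\|\alpha\ttt-\alpha^*\|^2 + 2\sum_i L_i^2\|w\ttt-w^*\|^2$. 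Collecting the constants and comparing with the definitions $Q = \tfrac1n\sum v_i$, $M = Q(1+\tfrac{\gamma Q}{\lambda^2 n})$, and $L = \max_i L_i$ should reproduce the stated bound $2M(\E[\|\alpha\ttt-\alpha^*\|^2\mid\alpha^{(t-1)}] + L\E[\|w\ttt-w^*\|^2\mid\alpha^{(t-1)}])$.

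The main obstacle I anticipate is the bookkeeping in the second step: getting the two product-of-sums factors to combine into exactly the constant $M = Q(1+\gamma Q/(\lambda^2 n))$ rather than a looser multiple of it, and making sure the $w$-dependent term carries exactly the coefficient $L$ (not $L^2$ or $L/\lambda$) — this requires using the bound $L_i \le \|x_i\|^2\tilde L_i$ and the relation $\gamma = \lambda\tilde L$ at just the right places, and possibly replacing the crude $v_i/\sqrt{v_i\gamma+n\lambda^2}$ estimate by a sharper one that keeps both terms of $M$. A secondary subtlety is the conditioning: the statement conditions on $\alpha^{(t-1)}$ while the quantity inside involves $\kappa\ttt$, $\alpha\ttt$, $w\ttt$; I would first derive the pointwise (conditioned on $\alpha\ttt$) inequality $\E[\|\tfrac{\kappa_i\ttt x_i}{np_i}\|^2\mid\alpha\ttt] \le 2M(\|\alpha\ttt-\alpha^*\|^2 + L\|w\ttt-w^*\|^2)$ and then take $\E[\,\cdot\mid\alpha^{(t-1)}]$ of both sides, using the tower property and linearity, to land on the displayed form.
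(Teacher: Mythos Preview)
Your overall strategy matches the paper's: substitute the closed form of $p_i^*$, apply Cauchy--Schwarz to extract $\|\kappa\ttt\|^2$, and then bound $\|\kappa\ttt\|^2\le 2\|\alpha\ttt-\alpha^*\|^2+2L\|w\ttt-w^*\|^2$ via $(a+b)^2\le 2a^2+2b^2$ and Lipschitz continuity, finally passing to $\E[\,\cdot\mid\alpha^{(t-1)}]$ by the tower property exactly as you describe.

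The bookkeeping obstacle you anticipate is real for your proposed decomposition (two separate sums, subadditivity of $\sqrt{\cdot}$), but the paper sidesteps it with one tactical change. It keeps the \emph{pointwise} form
\[
\frac{(\kappa_i\ttt)^2 v_i}{n^2(p_i^*)^2}
=\frac{v_i}{n^2(v_i\gamma+n\lambda^2)}\Big(\sum_{j}\sqrt{v_j\gamma+n\lambda^2}\,|\kappa_j\ttt|\Big)^2
\]
and applies Cauchy--Schwarz \emph{once} to the squared sum, giving $\big(\sum_j\sqrt{v_j\gamma+n\lambda^2}\,|\kappa_j\ttt|\big)^2\le \big(\sum_j(v_j\gamma+n\lambda^2)\big)\|\kappa\ttt\|^2=(n^2\lambda^2+n\gamma Q)\|\kappa\ttt\|^2$. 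Only then does it take expectation (i.e.\ multiply by $p_i^*$ and sum), using the crude bounds $p_i^*\le 1$ and $v_i\gamma+n\lambda^2\ge n\lambda^2$; this produces exactly $Q(1+\gamma Q/(n\lambda^2))\|\kappa\ttt\|^2=M\|\kappa\ttt\|^2$ with no square-root splitting and no $\sum_i v_i^2$ term. So your plan is correct, but replacing your two-factor estimate by this single Cauchy--Schwarz step is what makes the constant come out cleanly.
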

Note that Theorem~\ref{cor: all convex} can be used to show that both $\E[\norm{\alpha^{(t)}-\alpha^*}^2] $ and $\E[\norm{w^{(t)}-w^*}^2]$ go to zero as $e^{-\theta^*t}$. We can then show that $\E[ \norm{\tfrac{1}{np_i} \vc{\kappa_i}{t}x_i}^2 ]  \leq \epsilon$ as long as $t\geq \tilde{O}(\tfrac{1}{\theta^*}\log(\tfrac{1}{\epsilon}))$. Furthermore, we achieve the same variance reduction rate as shown in \cite{DBLP:journals/corr/Shalev-Shwartz15}, i.e., $\E[\norm{\frac{1}{np_i}\kappa_i^{(t)}x_i}^2]\sim \tilde{O}(\norm{\kappa^{(t)}}^2)$.

For the dual free SDCA algorithm in \cite{DBLP:journals/corr/Shalev-Shwartz15} where uniform sampling is adopted, the parameter $\theta$ should be set to at most $\min\tfrac{\lambda}{\lambda n + \tilde{L}}$, where $\tilde{L}\geq \max_iv_i \cdot  L$. However, from Corollary \ref{cor: all convex}, we know that this $\theta$ is smaller than $\theta^*$, so dual free SDCA will have a slower convergence rate than our algorithm. In \cite{csiba2015primal}, where they use a fixed probability distribution $p_i$ for sampling of coordinates, they must choose $\theta$ less than or equal to $\min_i\tfrac{p_in\lambda}{L_iv_i+n\lambda}$.
This is consistent with \cite{DBLP:journals/corr/Shalev-Shwartz15} where  $p_i=1/n$ for all $i\in [n]$. With respect to our adfSDCA Algorithm \ref{Alg: adfSDCA}, at any iteration $t$, we have that $\theta^{(t)}$ is greater than or equal to $\theta^*$, which again implies that our convergence results are better.

\subsection{Case II: The average of the loss functions is convex}\label{sec:analysisCaseII}

Here we follow the analysis in \cite{DBLP:journals/corr/Shalev-Shwartz15} and consider the case where individual loss functions $\phi_i(\cdot)$ for $i \in[n]$ are allowed to be nonconvex as long as the average $\frac1n\sum_{j=1}^n \phi_j(\cdot)$ is convex. First we define several parameters that are analogous to the ones used in Section~\ref{sec:analysisCaseI}. Let
\begin{equation}\label{eq:bargamma}
  \bar\gamma \eqdef \frac{1}{n}\sum_{i=1}^nL_i^2,
\end{equation}
where $L_i$ is given in \eqref{ass: smooth}, and define the following potential function. For all iterations $t\geq 0$, let
\begin{equation}\label{eq:barD}
  \bar D\ttt \eqdef\frac{1}{n}\norm{\alpha\ttt-\alpha^*}^2 + \bar\gamma  \norm{w\ttt-w^*}^2.
\end{equation}
We also define the following constants
\begin{equation}\label{eq:barC0}
  \bar C_0 \eqdef\frac{1}{n}\norm{\alpha^{(0)}-\alpha^*}^2 + \bar\gamma  \norm{w^{(0)}-w^*}^2,
\end{equation}
and
\begin{equation}\label{eq:barM}
         \bar M\eqdef Q\left(1+ \frac{\bar\gamma Q}{\lambda^2 n}\right).
     \end{equation}
Then we have the following theoretical results.
\begin{lemma}\label{thm: average convex}
Let $L_i$, $\kappa_i^{(t)}$, $\bar\gamma$, $\bar D\ttt$, and $v_i$ be as defined in \eqref{ass: smooth}, \eqref{eq:dualresidue}, \eqref{eq:bargamma}, \eqref{eq:barD} and \eqref{eq:viQ}, respectively. Suppose that every $\phi_i, i\in [n]$ is $L_i$-smooth and that the average of the $n$ loss functions $\tfrac{1}{n}\sum_{i=1}^n\phi_i(w^Tx_i)$ is convex. Let $\theta\in(0,1)$.
Then at every iteration $t\geq 0$ of Algorithm~\ref{Alg: adfSDCA}, a probability distribution $p\ttt$ that satisfies Definition~\ref{Def: coherence} is generated and
\begin{align}\label{eq: average convex conclusion}
\E[\bar D^{(t+1)}|\alpha\ttt] - (1- \theta)\bar D\ttt
\leq \sum_{i=1}^n\left(-\frac{\theta}{n}\bigg(1- \frac{\theta}{p_i\ttt}\bigg)+\frac{\theta^2v_i \bar\gamma}{n^2 \lambda^2 p_i\ttt}\right)(\kappa_i^{(t)})^2.
\end{align}
\end{lemma}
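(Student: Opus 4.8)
The plan is to follow the template of the proof of Lemma~\ref{thm: seperate convex}: track the one–step change of the potential $\bar D\ttt$ in expectation over the random coordinate, and reduce the claimed bound \eqref{eq: average convex conclusion} to an elementary inequality about the gradients of the $\phi_i$'s. The only genuinely new point relative to Case~I is that the individual losses need not be convex, so convexity may be used only through the \emph{average} loss; everything else is a rerun of the Case~I algebra with $\tilde L$ replaced by the constants $L_i$ and $\gamma$ replaced by $\bar\gamma$.

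First I would expand $\bar D^{(t+1)}$ using the updates \eqref{alphaupdate} and \eqref{wupdate}. Since at iteration $t$ only the sampled coordinate $i$ of $\alpha$ changes and $w$ changes by a multiple of $x_i$, for a fixed $i$ (drawn with probability $p_i\ttt$) one has $\|\alpha^{(t+1)}-\alpha^*\|^2=\|\alpha\ttt-\alpha^*\|^2-2\theta (p_i\ttt)^{-1}\kappa_i\ttt(\alpha_i\ttt-\alpha_i^*)+\theta^2(p_i\ttt)^{-2}(\kappa_i\ttt)^2$, and similarly $\|w^{(t+1)}-w^*\|^2=\|w\ttt-w^*\|^2-\tfrac{2\theta}{n\lambda p_i\ttt}\kappa_i\ttt x_i^T(w\ttt-w^*)+\tfrac{\theta^2}{n^2\lambda^2 (p_i\ttt)^2}(\kappa_i\ttt)^2 v_i$, with $v_i=\|x_i\|^2$. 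Taking the conditional expectation $\E[\,\cdot\mid\alpha\ttt]$, i.e.\ summing against $p\ttt$, collapses the $(p_i\ttt)^{-1}$ and $(p_i\ttt)^{-2}$ factors to $1$ and $(p_i\ttt)^{-1}$, giving
\begin{align*}
\E[\bar D^{(t+1)}\mid\alpha\ttt]
= \bar D\ttt - \tfrac{2\theta}{n}\sum_{i}\kappa_i\ttt(\alpha_i\ttt-\alpha_i^*)
- \tfrac{2\theta\bar\gamma}{n\lambda}\sum_{i}\kappa_i\ttt x_i^T(w\ttt-w^*)
+ \tfrac{\theta^2}{n}\sum_{i}\tfrac{(\kappa_i\ttt)^2}{p_i\ttt}
+ \tfrac{\theta^2\bar\gamma}{n^2\lambda^2}\sum_{i}\tfrac{v_i(\kappa_i\ttt)^2}{p_i\ttt}.
\end{align*}
Terms with $\kappa_i\ttt=0$ vanish, so coherence of $p\ttt$ (Definition~\ref{Def: coherence}), which is exactly what Algorithm~\ref{Alg: adfSDCA} generates, is all that is needed for the right-hand side to be well defined; no optimality of $p\ttt$ is used, which is why the lemma holds for an arbitrary $\theta\in(0,1)$.

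Next I would insert the optimality characterisation of $(w^*,\alpha^*)$, namely $\alpha_i^*=-\phi_i'(x_i^Tw^*)$ (so $\kappa^*\equiv0$) together with the primal–dual map \eqref{eq:walphaPDmap} at both $t$ and $*$. Setting $\Delta_i:=\phi_i'(x_i^Tw\ttt)-\phi_i'(x_i^Tw^*)$ gives the two identities $\alpha_i\ttt-\alpha_i^*=\kappa_i\ttt-\Delta_i$ and $\sum_i\kappa_i\ttt x_i=\lambda n(w\ttt-w^*)+\sum_i\Delta_i x_i$. Substituting these, expanding $\bar D\ttt$ and $\|\alpha\ttt-\alpha^*\|^2=\sum_i(\kappa_i\ttt-\Delta_i)^2$, and cancelling, the two $\theta^2$-sums on each side match identically and \eqref{eq: average convex conclusion} becomes equivalent (after dividing by $\theta>0$) to
\begin{equation*}
\tfrac1n\sum_{i=1}^n\Delta_i^2 \;\le\; \bar\gamma\,\|w\ttt-w^*\|^2 \;+\; \tfrac{2\bar\gamma}{n\lambda}\,(w\ttt-w^*)^T\sum_{i=1}^n\Delta_i x_i .
\end{equation*}
To finish, observe that $\sum_i\Delta_i x_i=n\big(\nabla\bar f(w\ttt)-\nabla\bar f(w^*)\big)$ with $\bar f(w)=\tfrac1n\sum_i\phi_i(x_i^Tw)$; by hypothesis $\bar f$ is convex, so monotonicity of its gradient makes the cross term nonnegative, and it suffices to prove $\tfrac1n\sum_i\Delta_i^2\le\bar\gamma\|w\ttt-w^*\|^2$. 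This last bound follows from the $L_i$-smoothness \eqref{ass: smooth}, which gives $\Delta_i^2\le L_i^2\|w\ttt-w^*\|^2$ for each $i$, averaged against the definition $\bar\gamma=\tfrac1n\sum_iL_i^2$ in \eqref{eq:bargamma}.

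The step I expect to be the crux is precisely this last one, and it is where Case~II really differs from Case~I. In Lemma~\ref{thm: seperate convex} each $\phi_i$ is convex, so one can invoke co-coercivity of the individual one-dimensional losses to bound $\Delta_i^2$ by a multiple of $\Delta_i\,x_i^T(w\ttt-w^*)$ and absorb it into the cross term that is already present, which is what allows Case~I to keep the milder constant $\gamma=\lambda\tilde L$. With possibly nonconvex $\phi_i$ that route is unavailable: the cross term can be exploited only through its nonnegativity, coming from convexity of the \emph{average}, and $\tfrac1n\sum_i\Delta_i^2$ must be dominated by $\bar\gamma\|w\ttt-w^*\|^2$ alone — which is exactly why $\bar\gamma$ is forced to be the average of the \emph{squared} Lipschitz constants rather than a first-power quantity. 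A minor point to get right is the bookkeeping that all $p_i\ttt$-dependent terms enter only multiplied by $(\kappa_i\ttt)^2$, so the reduction goes through verbatim for any coherent $p\ttt$, matching the generality of the statement.
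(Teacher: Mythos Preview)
Your proposal is correct and follows essentially the same route as the paper. The paper packages the computation via the decomposition $\E[\bar D^{(t+1)}\mid\alpha\ttt]-\bar D\ttt\le -\theta A\ttt+\mathbf{\bar C_1}+\mathbf{\bar C_2}$ and then bounds $\mathbf{\bar C_1}\le -\bar\gamma\theta\|w\ttt-w^*\|^2$ using exactly your two ingredients: the Lipschitz bound $\Delta_i^2\le L_i^2\|w\ttt-w^*\|^2$ and the inequality $\nabla P(w\ttt)^T(w\ttt-w^*)\ge\lambda\|w\ttt-w^*\|^2$, which is equivalent (via $\nabla P(w)=\nabla\bar f(w)-\nabla\bar f(w^*)+\lambda(w-w^*)$) to the gradient monotonicity of $\bar f$ that you invoke.
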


\begin{theorem}\label{cor: aver-convex}
Let $L$, $\kappa_i^{(t)}$, $\bar\gamma$ $\bar D\ttt$, $v_i$, and $\bar C_0$ be as defined in \eqref{eq:LLtilde}, \eqref{eq:dualresidue}, \eqref{eq:bargamma}, \eqref{eq:barD}, \eqref{eq:viQ}, and \eqref{eq:barC0} respectively. Suppose that every $\phi_i, i\in [n]$ is $L_i$-smooth and that the average of the $n$ loss functions $\tfrac{1}{n}\sum_{i=1}^n\phi_i(w^Tx_i)$ is convex. Let $\theta\ttt\in(0,1)$ using \eqref{eq: optimal theta} for all $t\geq 0$ and let $p^*$ be defined via \eqref{eq: optimal probabilities}. Then, setting $p^{(t)} = p^*$ at every iteration $t\geq 0$ of Algorithm~\ref{Alg: adfSDCA}, gives
\begin{align}
    \E\big[\bar D^{(t+1)}| \alpha\ttt\big] \leq (1- \theta^*)\bar D\ttt,
\end{align}
where
$$ \theta^*= \frac{n \lambda^2}{\sum_{i=1}^n(v_i \bar\gamma+n \lambda^2)} \leq \theta^{(t)}.$$
Furthermore, for $\epsilon>0$, if

\begin{equation}\label{eq:Tavconvex}
    T\geq \bigg(n + \frac{\bar \gamma Q}{\lambda^2}\bigg)\log\left(\frac{(\lambda + L)\bar C_0}{2\bar\gamma\epsilon}\right),
\end{equation}
then $\E[P(w^{(T)})-P(w^*)]\leq \epsilon$.
\end{theorem}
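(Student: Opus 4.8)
The plan is to follow the proof of Theorem~\ref{cor: all convex} almost verbatim, with $\gamma$ replaced by $\bar\gamma$ and Lemma~\ref{thm: average convex} used in place of Lemma~\ref{thm: seperate convex}. First I would note that the right-hand side of \eqref{eq: average convex conclusion} is algebraically identical to that of \eqref{eq: separate convex conclusion} after the substitution $\gamma\mapsto\bar\gamma$. Dividing that expression by $\theta>0$ and rearranging shows that it is nonpositive — and hence $\E[\bar D^{(t+1)}|\alpha\ttt]\le(1-\theta)\bar D\ttt$ — precisely when $\theta\le\bar\Theta(\kappa\ttt,p\ttt)$, where $\bar\Theta$ denotes the function in \eqref{eq:Theta} with $v_i\gamma$ replaced by $v_i\bar\gamma$; moreover, taking $\theta=\bar\Theta(\kappa\ttt,p\ttt)$ makes the right-hand side of \eqref{eq: average convex conclusion} exactly zero, which gives the per-iteration contraction with factor $1-\theta$.

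Next, since Lemma~\ref{lem: optimal probabilities} is an optimization statement that treats the coefficient in front of $v_i$ as an arbitrary positive constant, applying it with $\bar\gamma$ in place of $\gamma$ shows that $p^*$ as in \eqref{eq: optimal probabilities} (with $\bar\gamma$) maximizes $\bar\Theta(\kappa\ttt,\cdot)$ over the simplex and that the induced step-size is $\theta\ttt=\bar\Theta(\kappa\ttt,p^*)$ given by \eqref{eq: optimal theta} (again with $\bar\gamma$). Substituting $p\ttt=p^*$ and this $\theta\ttt$ into the previous step yields $\E[\bar D^{(t+1)}|\alpha\ttt]\le(1-\theta\ttt)\bar D\ttt$. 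To pass to the uniform bound I would apply Cauchy--Schwarz to the denominator of \eqref{eq: optimal theta}, namely $\big(\sum_{i\in I_\kappa}\sqrt{v_i\bar\gamma+n\lambda^2}\,|\kappa_i|\big)^2\le\big(\sum_{i\in I_\kappa}(v_i\bar\gamma+n\lambda^2)\big)\big(\sum_{i\in I_\kappa}\kappa_i^2\big)\le\big(\sum_{i=1}^n(v_i\bar\gamma+n\lambda^2)\big)\sum_{i\in I_\kappa}\kappa_i^2$, which gives $\theta\ttt\ge n\lambda^2/\sum_{i=1}^n(v_i\bar\gamma+n\lambda^2)=\theta^*$; since $\bar D\ttt\ge0$ this establishes $\E[\bar D^{(t+1)}|\alpha\ttt]\le(1-\theta^*)\bar D\ttt$.

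Finally I would take total expectations and iterate this inequality from $t=0$, using $\bar D^{(0)}=\bar C_0$ and $1-\theta^*\le e^{-\theta^*}$, to get $\E[\bar D^{(T)}]\le(1-\theta^*)^T\bar C_0\le e^{-\theta^* T}\bar C_0$. To turn this into the claimed complexity I would use that $P$ is $(\lambda+L)$-smooth — its data term $\tfrac1n\sum_i\phi_i(x_i^T\cdot)$ is $L$-smooth by \eqref{ass: smooth}--\eqref{eq:LLtilde} and the regularizer contributes $\lambda$ — so that, together with $\nabla P(w^*)=0$, the standard descent inequality gives $P(w^{(T)})-P(w^*)\le\tfrac{\lambda+L}{2}\norm{w^{(T)}-w^*}^2\le\tfrac{\lambda+L}{2\bar\gamma}\bar D^{(T)}$, the last step being immediate from \eqref{eq:barD}. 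Hence $\E[P(w^{(T)})-P(w^*)]\le\tfrac{\lambda+L}{2\bar\gamma}e^{-\theta^* T}\bar C_0$, and requiring this to be at most $\epsilon$, together with $1/\theta^*=\big(\sum_{i=1}^n(v_i\bar\gamma+n\lambda^2)\big)/(n\lambda^2)=n+\bar\gamma Q/\lambda^2$ (using $\sum_i v_i=nQ$ from \eqref{eq:MQ}), yields exactly \eqref{eq:Tavconvex}.

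The main obstacle is not any single computation but checking that the two structural ingredients from Section~\ref{sec:analysisCaseI} genuinely carry over: that the closed-form optimization in Lemma~\ref{lem: optimal probabilities} is indifferent to whether $\gamma$ or $\bar\gamma$ multiplies $v_i$ (so the optimal probabilities and step-size take the stated $\bar\gamma$-forms), and that the smoothness constant entering the final descent step is exactly $\lambda+L$ — the subtle point being that here the individual $\phi_i$ may be nonconvex, yet $P$ remains convex and $(\lambda+L)$-smooth, which is all that step needs.
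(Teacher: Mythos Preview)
Your proposal is correct and follows essentially the same approach as the paper: the paper's proof of Theorem~\ref{cor: aver-convex} explicitly states that it is ``almost identical to the proof of Theorem~\ref{cor: all convex}, but with the parameters defined in Section~\ref{sec:analysisCaseII},'' and then carries out precisely the Cauchy--Schwarz lower bound on $\Theta(\kappa,p^*)$, the iteration to $e^{-\theta^* T}\bar C_0$, the $(\lambda+L)$-smoothness bound $P(w^{(T)})-P(w^*)\le\tfrac{\lambda+L}{2\bar\gamma}\bar D^{(T)}$, and the computation $1/\theta^*=n+\bar\gamma Q/\lambda^2$ that you outline. Your discussion of the two ``obstacles'' is more explicit than the paper's treatment but identifies exactly the points the paper takes for granted.
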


We remark that, $L_i\leq L$ for all $i\in [n]$, so $\bar\gamma \leq L^2$, which means that a conservative complexity bound is
\begin{equation*}
    T\geq \left(n + \frac{L^2 Q}{\lambda^2}\right)\log\left(\frac{(\lambda + L)\bar C_0}{2\bar\gamma\epsilon}\right).
\end{equation*}

We conclude this section with the following corollary.
\begin{corollary}\label{variance reduction_nonconvex}
    Let the conditions of Theorem~\ref{cor: aver-convex} hold and let $\bar M$ be defined in \eqref{eq:barM}. Then at every iteration $t\geq 0$ of Algorithm~\ref{Alg: adfSDCA},

\begin{equation*}
             \E\left[ \Big\|\frac{\vc{\kappa_i}{t}x_i}{np_i}\Big\|^2 |\alpha^{(t-1)}\right] \leq 2\bar M(\E[\norm{\alpha^{(t)}-\alpha^*}^2|\alpha^{(t-1)}] + L\E[\norm{w^{(t)}-w^*}^2|\alpha^{(t-1)}]).
\end{equation*}

\end{corollary}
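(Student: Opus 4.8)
The plan is to follow the same route as in the proof of Corollary~\ref{variance reduction}, with $\bar\gamma$ and $\bar M$ replacing $\gamma$ and $M$, and with the estimates that only invoke $L_i$-smoothness of $\phi_i(x_i^T\,\cdot\,)$ and convexity of $\tfrac1n\sum_i\phi_i(x_i^T\,\cdot\,)$ replacing the individual-convexity ones. First I would fix $t$ and condition on the iterate at iteration $t$ (equivalently, on $\alpha^{(t)}$, since $w^{(t)}$ is then determined by \eqref{eq:walphaPDmap}); the only remaining randomness in $\|\kappa_i^{(t)}x_i/(np_i)\|^2$ is the choice of the coordinate $i$ according to $p^{(t)}=p^*$, so
\begin{equation*}
\E\!\left[\left\|\frac{\kappa_i^{(t)}x_i}{np_i}\right\|^2\,\big|\,\alpha^{(t)}\right]
=\sum_{i\in I_{\kappa^{(t)}}}p^*_i\,\frac{\|x_i\|^2(\kappa_i^{(t)})^2}{n^2(p^*_i)^2}
=\frac1{n^2}\sum_{i\in I_{\kappa^{(t)}}}\frac{v_i(\kappa_i^{(t)})^2}{p^*_i}.
\end{equation*}

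The first main step is to remove the factors $p^*_i$. Substituting the closed form \eqref{eq: optimal probabilities} (with $\bar\gamma$ in place of $\gamma$) and writing $S=\sum_{j\in I_{\kappa^{(t)}}}\sqrt{v_j\bar\gamma+n\lambda^2}\,|\kappa_j^{(t)}|$, each summand is $\tfrac{v_i\,|\kappa_i^{(t)}|\,S}{\sqrt{v_i\bar\gamma+n\lambda^2}}$. Two elementary bounds feed in: the inequality $\sqrt{v_i\bar\gamma+n\lambda^2}\ge\sqrt{v_i\bar\gamma}$ gives $\tfrac{v_i}{\sqrt{v_i\bar\gamma+n\lambda^2}}\le\sqrt{v_i/\bar\gamma}$, hence $\sum_i\tfrac{v_i|\kappa_i^{(t)}|}{\sqrt{v_i\bar\gamma+n\lambda^2}}\le\bar\gamma^{-1/2}\sum_i\sqrt{v_i}\,|\kappa_i^{(t)}|$; and the identity underlying Lemma~\ref{lem: optimal probabilities}, $\sum_i(n\lambda^2+v_i\bar\gamma)\tfrac{(\kappa_i^{(t)})^2}{p^*_i}=\tfrac{n\lambda^2\|\kappa^{(t)}\|^2}{\theta^{(t)}}$, gives (after discarding the nonnegative $n\lambda^2$-part and using $\theta^{(t)}\ge\theta^*$ with $1/\theta^*=(\bar\gamma Q+n\lambda^2)/\lambda^2$) the bound $\bar\gamma\sum_i\tfrac{v_i(\kappa_i^{(t)})^2}{p^*_i}\le\tfrac{n\lambda^2}{\theta^*}\|\kappa^{(t)}\|^2$. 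Combining these with Cauchy--Schwarz and $\sum_iv_i=nQ$ and collecting terms, one arrives at an estimate of the form $\tfrac1{n^2}\sum_i\tfrac{v_i(\kappa_i^{(t)})^2}{p^*_i}\le\tfrac{c}{n}\,\bar M\,\|\kappa^{(t)}\|^2$, the point being that $\bar\gamma Q+n\lambda^2$ and $Q$ reassemble into $\bar M=Q(1+\tfrac{\bar\gamma Q}{\lambda^2 n})$.

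The second main step is to bound $\|\kappa^{(t)}\|^2$ by the two sub-optimality terms on the right-hand side. Using the first-order relation $\alpha_i^*=-\phi_i'(x_i^Tw^*)$, I would write $\kappa_i^{(t)}=(\alpha_i^{(t)}-\alpha_i^*)+(\phi_i'(x_i^Tw^{(t)})-\phi_i'(x_i^Tw^*))$, apply $(a+b)^2\le2a^2+2b^2$, and estimate the second group of terms: \eqref{ass: smooth} gives $\|(\phi_i'(x_i^Tw^{(t)})-\phi_i'(x_i^Tw^*))x_i\|\le L_i\|w^{(t)}-w^*\|$, and summing over $i$---using convexity of the average loss to relate $\sum_i(\phi_i'(x_i^Tw^{(t)})-\phi_i'(x_i^Tw^*))\langle x_i,w^{(t)}-w^*\rangle$ to $\langle\nabla P(w^{(t)})-\nabla P(w^*),\,w^{(t)}-w^*\rangle$ together with the smoothness of $P$---yields $\|\kappa^{(t)}\|^2\le2\|\alpha^{(t)}-\alpha^*\|^2+c'\,L\,\|w^{(t)}-w^*\|^2$ for an appropriate constant $c'$. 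Substituting this into the display from the first step, and then taking the outer expectation $\E[\,\cdot\mid\alpha^{(t-1)}]$ (the bound obtained so far holds pointwise in the realisation of the $(t-1)$st update, so the tower rule applies), produces the claimed inequality with the constant on the right equal to $2\bar M$.

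The step I expect to be the real work is the constant bookkeeping linking the two displays: both the $p^*$-elimination and the bound on $\|\kappa^{(t)}\|^2$ involve a genuine choice---how to split $v_i\bar\gamma$ against $n\lambda^2$ inside $\sqrt{v_i\bar\gamma+n\lambda^2}$, and how to balance $\|\alpha^{(t)}-\alpha^*\|^2$ against $\|w^{(t)}-w^*\|^2$---and only the particular choice that mirrors the Case~I computation of Corollary~\ref{variance reduction} (with $\bar\gamma$ substituted for $\gamma$) makes everything collapse to exactly $2\bar M$ and produces the factor $L$ multiplying $\|w^{(t)}-w^*\|^2$. Checking that this choice is admissible is the delicate point; the remaining manipulations are routine and are essentially those already used in the proof of Lemma~\ref{thm: average convex}.
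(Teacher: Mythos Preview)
Your plan over-complicates the argument. The paper's proof of Corollary~\ref{variance reduction_nonconvex} is literally one line: it is identical to the proof of Corollary~\ref{variance reduction} with $\gamma$ replaced by $\bar\gamma$ and $M$ by $\bar M$. The crucial observation you are missing is that \emph{nothing in the proof of Corollary~\ref{variance reduction} uses convexity of the $\phi_i$} (individually or on average). The only ingredients there are the closed form \eqref{eq: optimal probabilities} for $p^*$, a single application of Cauchy--Schwarz to $S^2=\bigl(\sum_j\sqrt{v_j\gamma+n\lambda^2}\,|\kappa_j|\bigr)^2\le\bigl(\sum_j(v_j\gamma+n\lambda^2)\bigr)\|\kappa\|^2$, two trivial bounds ($p_i\le1$ and $n\lambda^2+v_i\gamma\ge n\lambda^2$), and Lipschitz smoothness for the final splitting of $\|\kappa\|^2$. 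Every one of these holds verbatim in Case~II, so there is no ``admissibility'' to check and no delicate constant-matching.

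Concretely: your Step~1 detour through the identity $\sum_i(n\lambda^2+v_i\bar\gamma)(\kappa_i^2/p_i^*)=n\lambda^2\|\kappa\|^2/\theta^{(t)}$ and the separate bound $\sqrt{v_i\bar\gamma+n\lambda^2}\ge\sqrt{v_i\bar\gamma}$ is unnecessary and, as you suspected, does not by itself land on $\bar M$ (it yields $(\bar\gamma Q+n\lambda^2)/(n\bar\gamma)$, which is not $\bar M$ in general). The paper's single Cauchy--Schwarz on $S$ collapses the constant to exactly $\bar M$ with no bookkeeping. Your Step~2 invocation of convexity of the average loss is also misplaced: that hypothesis controls $\langle\nabla P(w)-\nabla P(w^*),\,w-w^*\rangle$, not the sum of squares $\sum_i(\phi_i'(x_i^Tw)-\phi_i'(x_i^Tw^*))^2$ that you actually need; the latter is handled exactly as in Corollary~\ref{variance reduction}, purely by smoothness. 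In short, the proof needs no idea or estimate beyond those already in Case~I.
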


\section{Heuristic adfSDCA} 
\label{sec:heuristic_adfsdca}

One of the disadvantages of Algorithm \ref{Alg: adfSDCA} is that it is necessary to update the entire probability distribution $p \sim \kappa$ at each iteration, i.e., every time a single coordinate is updated the probability distribution is also updated. Note that if the data are sparse and coordinate $i$ is sampled during iteration $t$, then, one need only update probabilities $p_j$ for which $x_j^Tx_i \neq 0$; unfortunately for some datasets this can still be expensive. In order to overcome this shortfall we follow the recent work in \cite{csiba2015stochastic} and present a heuristic algorithm that allows the probabilities to be updated less frequently and in a computationally inexpensive way. The process works as follows. At the beginning of each epoch the (full/exact) nonuniform probability distribution is computed, and this remains fixed for the next $n$ coordinate updates, i.e., it is fixed for the rest of that epoch. During that same epoch, if coordinate $i$ is sampled (and thus updated) the probability $p_i$ associated with that coordinate is reduced (it is shrunk by $p_i \gets p_i/s$). The intuition behind this procedure is that, if coordinate $i$ is updated then the dual residue $|\kappa_i|$ associated with that coordinate will decrease. Thus, there will be little benefit (in terms of reducing the sub-optimality of the current iterate) in sampling and updating that same coordinate $i$ again. To avoid choosing coordinate $i$ in the next iteration, we shrink the probability $p_i$ associated with it, i.e., we reduce the probability by a factor of $1/s$. Moreover, shrinking the coordinate is less computationally expensive than recomputing the full adaptive probability distribution from scratch, and so we anticipate a decrease in the overall running time if we use this heuristic strategy, compared with the standard adfSDCA algorithm. This procedure is stated formally in Algorithm~\ref{Alg: adfSDCA+}. Note that Algorithm~\ref{Alg: adfSDCA+} does not fit the theory established in Section~\ref{sec:convergence_Analysis}. Nonetheless, we have observed convergence in practice and a good numerical performance when using this strategy (see the numerical experiments in Section~\ref{sec:numerical_experiments}).

\begin{algorithm}[ht]
    \caption{Heuristic Adaptive Dual Free SDCA (adfSDCA+) }
    \label{Alg: adfSDCA+}
    \begin{algorithmic}[1]
        \STATE {\bf Input:} Data: $\{x_i, \phi_i\}_{i=1}^n$, probability shrink parameter $s$
        \STATE  {\bf Initialization:} Choose  $ \alpha^{(0)} \in \R^n$
        \STATE Set $w^{(0)} = \tfrac{1}{\lambda n}\sum_{i=1}^n \alpha_i^{(0)}x_i $
        \FOR {$t=0,1,2,\dots$}
        \IF {$\mod(t, n) == 0$}
        \STATE Calculate dual residue $\kappa^{(t)}_i = \phi'_i(x_i^Tw^{(t)}) + \alpha^{(t)}_i$, for all $i\in [n]$
        \STATE Generating adapted probabilities distribution $p^{(t)} \sim \kappa^{(t)}$
        \ENDIF
        \STATE Select coordinate $i$ from $[n]$ according to $p^{(t)}$
        \STATE Set step-size $\theta\ttt \in (0, 1)$ as in \eqref{eq: optimal theta}
        \STATE {\bf Update:} $\alpha_i^{(t+1)} = \alpha_i^{(t)} - \theta\ttt (p_i^{(t)})^{-1} \kappa^{(t)}_i$
        \STATE \textbf{Update: }$w^{(t+1)} = w^{(t)} - \theta\ttt(n \lambda p_i^{(t)})^{-1} \kappa^{(t)}_ix_i$
        \STATE \textbf{Update: } $p^{(t+1)}_i = p^{(t)}_i/s$
        \ENDFOR
    \end{algorithmic}


\end{algorithm}

\section{Mini-batch adfSDCA} 
\label{sec:mini_batch_adfsdca}

In this section we propose a mini-batch variant of Algorithm \ref{Alg: adfSDCA}. Before doing so, we stress that sampling a mini-batch non-uniformly is not easy. We first focus on the task of generating non-uniform random samples and then we will present our minibatch algorithm.

\subsection{Efficient single coordinate sampling} 
\label{sub:single coordinate sampling}
Before considering mini-batch sampling, we first show how to sample a single coordinate from a non-uniform distribution. Note that only discrete distributions are considered here.

There are multiple approaches that can be taken in this case. One na\"ive approach is to consider the Cumulative Distribution Function (CDF) of $p$, because a CDF can be computing in $O(n)$ time complexity and it also takes $O(n)$ time complexity to make a decision. One can also use a better data structure (e.g. a binary search tree) to reduce the decision cost to $O(\log n)$ time complexity, although the cost to set up the tree is $O(n \log n)$. Some more advanced approaches like the so-called alias method of \cite{kronmal1979alias} can be used to sample a single coordinate in only $O(1)$, i.e., sampling a single coordinate can be done in constant time but with a cost of $O(n)$ setup time. The alias method works based on the fact that any $n$-valued distribution can be written as a mixture of $n$ Bernoulli distributions.

In this paper we choose two sampling update strategies, one each for Algorithms~\ref{Alg: adfSDCA} and \ref{Alg: adfSDCA+}. For adfSDCA in Algorithm \ref{Alg: adfSDCA} the probability distribution must be recalculated at every iteration, so we use the alias method, which is highly efficient. The heuristic approach in Algorithm \ref{Alg: adfSDCA+} is a strategy that only alters the probability of a single coordinate (e.g. $p_i = p_i/s$) in each iteration. In this second case it is relatively expensive to use the alias method due to the linear time cost to update the alias structure, so instead we build a binary tree when the algorithm is initialized so that the update complexity reduces to $O(\log(n))$.

\subsection{Nonuniform Mini-batch Sampling}

Many randomized coordinate descent type algorithms utilize a sampling scheme that assigns every subset of $[n]$ a probability $p_S$, where $S\in 2^{[n]}$. In this section, we consider a particular type of sampling called a \emph{mini-batch} sampling that is defined as follows.

\begin{definition}\label{def: mini-batch sampling}
    A sampling $\hat{S}$ is called a mini-batch sampling, with batchsize $b$, consistent with the given marginal distribution $q:=(q_1, \dots, q_n)^T$, if the following conditions hold:
   \begin{enumerate}
    \item $|S|=b$; and
    \item $q_i\eqdef\sum_{i\in S, S\in \hat{S}}P(S) = bp_i$.
   \end{enumerate}
\end{definition}

Note that we study samplings  $\hat{S}$ that are \emph{non-uniform} since we allow $q_i$ to vary with $i$. The motivation to design such samplings arises from the fact that we wish to make use of the optimal probabilities that were studied in Section~\ref{sec:convergence_Analysis}.

We make several remarks about non-uniform mini-batch samplings below.
\begin{enumerate}
 \item For a given probability distribution $p$, one can derive a corresponding mini-batch sampling only if we have $p_i\leq \tfrac{1}{b}$ for all $i\in [n]$. This is obvious in the sense that $q_i = bp_i= \sum_{i\in S, S\in \hat{S}}P(S) \leq \sum_{S\in \hat{S}} P(S) = 1$.
 \item  For a given probability distribution $p$ and a batch size $b$, the mini-batch sampling may not be unique and it may not be proper, see for example \cite{richtarik2012parallel}. (A proper sampling is a sampling for which any  subset of size $b$ must have a \emph{positive} probability of being sampled.)
 \end{enumerate}

In Algorithm~\ref{Alg: Non-uniform Mini-batch sampling} we describe an approach that we used to generate a non-uniform mini-batch sampling of batchsize $b$ from a given marginal distribution $q$. Without loss of generality, we assume that the $q_i\in(0, 1)$ for $i\in [n]$ are sorted from largest to smallest.

\begin{algorithm}[ht]
    \caption{Non-uniform mini-batch sampling}
    \label{Alg: Non-uniform Mini-batch sampling}
    \begin{algorithmic}[1]
        \STATE {\bf Input:} Marginal distribution $q\in \R^n$ with $q_i\in (0, 1)$ $\forall i\in [n]$ and batchsize $b$ such that $\sum_{i=1}^nq_i=b$. Define $q_{n+1} = 0$
        \STATE  {\bf Output:} A mini-batch sampling $S$ (Definition \ref{def: mini-batch sampling})
        \STATE {\bf Initialization: } Index set $i, j\in \mathbb{N}^n$, and set $k= 1$.
        \FOR {$k= 1,\dots, n$}
        \STATE $i^k = \min_i\{i: p_i = q_b\}, j^k = \max_i\{i:p_i = q_b\}$
        \STATE {\bf Obtain }$r_k$:
        \begin{equation}
        r_k =\begin{cases}
              \min\Big{\{}\tfrac{j^k-i^k+1}{j^k-b}(q_{i^k-1}-q_b),
              \tfrac{j^k-i^k+1}{b-i^k+1}(q_{b}-q_{j^k+1})\Big{\}}, &i^k > 1\\
              \tfrac{j}{b}(q_b-q_{j^k+1}),& i^k = 1
        \end{cases}
        \end{equation}
        \STATE {\bf Update }$q_i$:
        \begin{equation}
            q_i = \begin{cases}
                q_i - r_k, &i\in [0, i^k-1],\\
                q_i - \tfrac{b-i^k+1}{j^k-i^k+1}r_k, &i\in[i^k, j^k]
            \end{cases}
        \end{equation}
        \STATE {\bf Terminate if } $q=0$, and set $m=k$
        \ENDFOR
        \STATE Select $K \in [m]$ randomly with discrete distribution $(r_1,\dots, r_{m})$
        \STATE Choose $b-i^K+1$ coordinates uniformly at random from $i^K$ to $j^K$, denote it by $W$
        \STATE $S = \{1,\dots, i^K-1\}\cup W$
    \end{algorithmic}
\end{algorithm}

We now state several facts about Algorithm \ref{Alg: Non-uniform Mini-batch sampling}.
\begin{enumerate}
    \item Algorithm~\ref{Alg: Non-uniform Mini-batch sampling} will terminate in at most $n$ iterations. This is because the update rules for $q_i$ (which depend on $r_k$ at each iteration), ensure that at least one $q_i$ will reduce to become equal to some $q_j < q_i$ (i.e., either  $q_{i^{k+1}-1} = q_b$ or $q_{j^{k+1}+1}=q_b$) and since there are $n$ coordinates in total, after at most $n$ iteration it must hold that $q_i=q_j$ for all $i,j\in[n]$. Note that if the algorithm begins with $q_i=q_j$ for all $i,j\in[n]$, which implies a uniform marginal distribution, the algorithm will terminated in a single step.
    \item For Algorithm~\ref{Alg: Non-uniform Mini-batch sampling} we must have $\sum_{i=1}^{m}r_i = 1$, where we assume that the algorithm terminates at iteration $m\in[1,n]$, since overall we have $\sum_{i=1}^{m}br_i = \sum_{i=1}^nq_i = b$.
    \item Algorithm~\ref{Alg: Non-uniform Mini-batch sampling} will always generate a proper sampling because when it terminates, the situation $p_i = p_j >0$, for all $i\neq j$, will always hold. Thus, any subset of size $b$ has a positive probability of being sampled.
    \item It can be shown that this algorithm works on an arbitrary given marginal probabilities as long as $q_i\in (0,1)$, for all $i\in[n]$.
\end{enumerate}

Figure \ref{fig : sampling demo} is a sample illustration of Algorithm \ref{Alg: Non-uniform Mini-batch sampling}, where we have a marginal distribution for $4$ coordinates given by $(0.8, 0.6, 0.4, 0.2)^T$ and we set the batchsize to be $b=2$. Then, the algorithm is run and finds $r$ to be $(0.2, 0.4, 0.4)^T$. Afterwards, with probability $r_1 = 0.2$, we will sample $2$-coordinates from $(1,2)$. With probability $r_2 = 0.4$, we will sample $2$-coordinates which has $(1)$ for sure and the other coordinate is chosen from $(2,3)$ uniformly at random and with probability $r_3 = 0.4$, we will sample $2$-coordinates from $(1,2,3,4)$ uniformly  at random.

Note that, here we only need to perform two kinds of operations. The first one is to sample a single coordinate from distribution $d$ (see Section~\ref{sub:single coordinate sampling}), and the second is to sample batches from a uniform distribution (see for example \cite{richtarik2012parallel}).
\begin{figure}[!ht]\centering
    \includegraphics[width=0.5\textwidth]{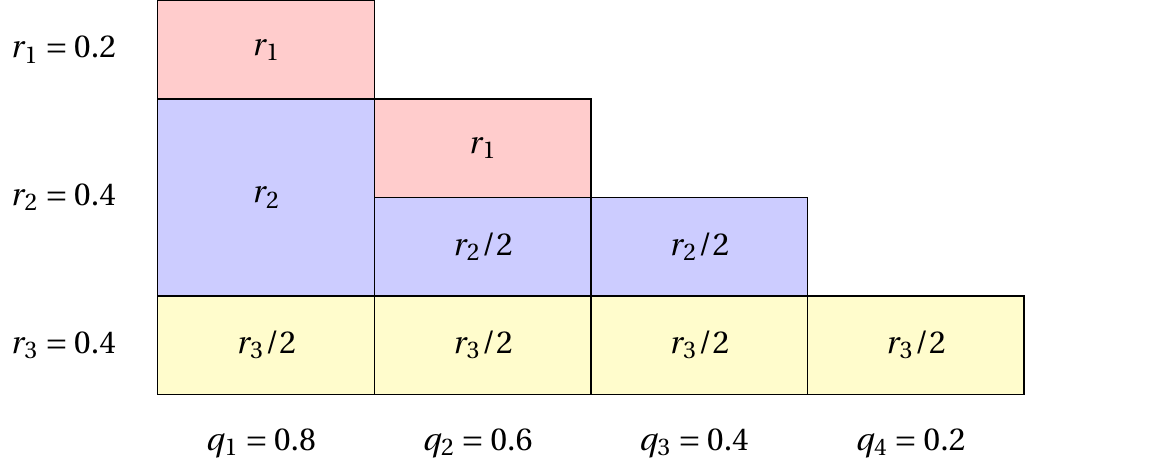}
    \caption{Toy demo illustrating how to obtain a non-uniform mini-batch sampling with batch size $b=2$ from $n=4$ coordinates.}
    \label{fig : sampling demo}
\end{figure}

\subsection{Mini-batch adfSDCA algorithm}
Here we describe a new adfSDCA algorithm that uses a mini-batch scheme. The algorithm is called mini-batch adfSDCA and is presented below as Algorithm \ref{Alg: mini-adfSDCA}.
\begin{algorithm}[ht]
    \caption{Mini-Batch adfSDCA}
    \label{Alg: mini-adfSDCA}
    \begin{algorithmic}[1]
            \STATE {\bf Input:} Data: $\{x_i, \phi_i\}_{i=1}^n$
        \STATE  {\bf Initialization:} Choose  $ \alpha^{(0)} \in \R^n$ and set batchsize $b$
        \FOR {$t = 0, 1, 2, \dots $}
                \STATE Calculate dual residue $\kappa^{(t)}_i = \phi'_i(x_i^Tw^{(t)}) + \alpha^{(t)}_i$, for all $i\in [n]$
        \STATE Generate the adaptive probability distribution $p^{(t)} \sim \kappa^{(t)}$
        \STATE Choose mini-batch $S\subset [n]$ of size $b$ according to probabilities distribution $p^{(t)}$
        \STATE Set step-size $\theta\ttt\in(0,1)$ as in \eqref{eq: optimal batch theta}
        \FOR {$i\in S$}
        \STATE {\bf Update:}
        $\alpha_i^{(t+1)} = \alpha_i^{(t)} -\theta\ttt (bp_i^{(t)})^{-1} \kappa^{(t)}_i$
        \ENDFOR
        \STATE \textbf{Update: } $w^{(t+1)} = w^{(t)} - \sum_{i\in S}\theta\ttt(n\lambda bp_i^{(t)})^{-1} \kappa^{(t)}_ix_i$
        \ENDFOR
    \end{algorithmic}
\end{algorithm}

Briefly, Algorithm~\ref{Alg: mini-adfSDCA} works as follows. At iteration $t$, adaptive probabilities are generated in the same way as for Algorithm~\ref{Alg: adfSDCA}. Then, instead of updating only one coordinate, a mini-batch $S$ of size $b\geq 1$ is chosen that is consistent with the adaptive probabilities. Next, the dual variables $\alpha_i^{(t)}, i\in S$ are updated, and finally the primal variable $w$ is updated according to the primal-dual relation \eqref{eq:walphaPDmap}.

In the next section we will provide a convergence guarantee for Algorithm~\ref{Alg: mini-adfSDCA}. As was discussed in Section \ref{sec:convergence_Analysis}, theoretical results are detailed under two different assumptions on the type of loss function: (i) all loss function are convex; and (ii) individual loss functions may be non-convex but the average over all loss functions is convex.

\subsection{Expected Separable Overapproximation}\label{sec:eso}
Here we make use of the Expected Separable Overapproximation (ESO) theory introduced in \cite{richtarik2012parallel} and further extended, for example, in \cite{qu2014coordinate}. The ESO definition is stated below.
\begin{definition}[Expected Separable Overapproximation, \cite{qu2014coordinate}]
    Let $\hat{S}$ be a sampling with marginal distribution $q=(q_1,\cdots,q_n)^T$. Then we say that the function $f$ admits a $v$-ESO with respect to the sampling $\hat{S}$ if $\forall x, h\in \R^n$, we have $v_1, \dots, v_n >0$, such that the following inequality holds
   $
        \E[f(x+h_{[\hat{S}]})] \leq f(x)
        + \sum_{i=1}^nq_i(\nabla_i f(x)h_i +  \tfrac{1}{2}v_ih_i^2).
    $
\end{definition}
\begin{remark}
  Note that, here we do not assume that $\hat{S}$ is a uniform sampling, i.e., we do not assume that $q_i =q_j$ for all $i, j \in [n]$.
\end{remark}

The ESO inequality is useful in this work because the parameter $v$ plays an important role when setting a suitable stepsize $\theta$ in our algorithm. Consequently, this also influences our complexity result, which depends on the sampling $\hat{S}$. For the proof of Theorem~\ref{thm: minibatch-all convex} (which will be stated in next subsection), the following is useful. Let $f(x) = \tfrac{1}{2}\norm{Ax}^2$, where $A = (x_1,\dots, x_n)$. We say that $f(x)$ admits a $v$-ESO if the following inequality holds
\begin{equation}\label{eq: mini-ESO}
        \E[\norm{Ah_{\hat{S}}}^2] \leq \sum_{i=1}^nv_iq_ih_i^2.
    \end{equation}
To derive the parameter $v$ we will make use of the following theorem.
\begin{theorem}[\cite{qu2014coordinate}]\label{thm: v-ESO}
Let $f$ satisfy the following assumption
$
    f(x+h) \leq f(x) + \langle \nabla f(x), h \rangle + \tfrac{1}{2}h^TA^TAh^T,
$
where $A$ is some matrix. Then, for a given sampling $\hat{S}$, $f$ admits a $v$-ESO, where $v$ is defined by
$    v_i = \min\{\lambda'(\mathbf{P}(\hat{S})), \lambda'(A^TA)\} \sum_{j=1}^mA_{ji}^2, i\in [n].
$
\end{theorem}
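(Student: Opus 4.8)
The plan is to obtain the ESO inequality by substituting the randomly masked direction into the stated quadratic overestimate, taking expectations, and then controlling the resulting quadratic form with a spectral inequality for Hadamard products.

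First I would apply the hypothesis with $h$ replaced by $h_{[\hat{S}]}$, the vector that agrees with $h$ on the coordinates lying in $\hat S$ and vanishes elsewhere, and take expectation over $\hat S$:
\begin{equation*}
\E[f(x+h_{[\hat{S}]})]\le f(x)+\E[\langle\nabla f(x),h_{[\hat{S}]}\rangle]+\tfrac12\,\E[\,h_{[\hat{S}]}^{T}A^{T}A\,h_{[\hat{S}]}\,].
\end{equation*}
Writing $\eta\in\{0,1\}^{n}$ for the indicator vector of $\hat S$ (so that $h_{[\hat{S}]}=\eta\circ h$ entrywise), the first marginal $\E[\eta_i]=\Prob(i\in\hat S)=q_i$ turns the linear term into $\sum_i q_i\nabla_i f(x)h_i$, which is already the linear part of the ESO. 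The second marginal $\E[\eta_i\eta_j]=\Prob(\{i,j\}\subseteq\hat S)=(\mathbf{P}(\hat S))_{ij}$ turns the quadratic term into $h^{T}\big(\mathbf{P}(\hat S)\circ A^{T}A\big)h$, where $\circ$ denotes the entrywise (Schur) product.

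It therefore suffices to prove the matrix inequality $\mathbf{P}(\hat S)\circ A^{T}A\preceq \min\{\lambda'(\mathbf{P}(\hat S)),\lambda'(A^{T}A)\}\,\Diag\!\big(q_i\sum_{j}A_{ji}^{2}\big)$, since contracting both sides with $h$ then gives exactly $\sum_i q_i v_i h_i^{2}$ with the stated $v_i$ (recall $(A^{T}A)_{ii}=\sum_j A_{ji}^{2}$). I would derive this from the elementary lemma: for positive semidefinite $B$ and $C$, $B\circ C\preceq\lambda'(B)\,\Diag(B_{ii}C_{ii})$, where $\lambda'(\cdot)$ denotes the largest eigenvalue after rescaling the diagonal of the matrix to the all-ones vector. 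Factoring $B=\Diag(B)^{1/2}B'\Diag(B)^{1/2}$ with $B'$ having unit diagonal, and using that conjugation by a diagonal matrix commutes with the Hadamard product, the lemma reduces to $B'\circ C\preceq\lambda_{\max}(B')\,\Diag(C)$, which is the Schur product theorem applied to $(\lambda_{\max}(B')I-B')\circ C\succeq 0$ together with $I\circ C=\Diag(C)$. Applying the lemma once with $B=\mathbf{P}(\hat S)$ (whose diagonal is $q$) and once with $B$ and $C$ interchanged (using $(A^{T}A)_{ii}=\sum_j A_{ji}^{2}$), and keeping the smaller of the two bounds, yields the required inequality; hence $f$ admits a $v$-ESO with $v_i=\min\{\lambda'(\mathbf{P}(\hat S)),\lambda'(A^{T}A)\}\sum_j A_{ji}^{2}$. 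Coordinates with $q_i=0$ never occur in $\hat S$, and the corresponding $v_i$ may be chosen arbitrarily.

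The main obstacle is exactly this matrix inequality: one has to arrange the diagonal rescaling so that the marginals $q_i$ end up multiplying precisely the column norms $\sum_j A_{ji}^{2}$ (and not, say, $q_i$ by itself), and one must recognize that the ``$\min$'' comes from running the Schur-product argument twice, alternately treating $\mathbf{P}(\hat S)$ and $A^{T}A$ as the factor whose normalized spectral radius is extracted. Everything else is routine bookkeeping with the first- and second-order inclusion probabilities of the sampling.
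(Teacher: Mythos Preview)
The paper does not prove this theorem at all: it is quoted verbatim from \cite{qu2014coordinate} and used as a black box in Section~\ref{sec:eso}. There is therefore no in-paper proof to compare against.

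That said, your argument is correct and is essentially the proof given in the cited reference. The two key moves --- (i) rewriting $\E[h_{[\hat S]}^{T}A^{T}A\,h_{[\hat S]}]$ as $h^{T}(\mathbf{P}(\hat S)\circ A^{T}A)h$ via the second-order inclusion probabilities, and (ii) bounding the Hadamard product by $\lambda'(B)\,\Diag(B_{ii}C_{ii})$ using diagonal rescaling followed by the Schur product theorem, applied once in each direction to produce the $\min$ --- are exactly how Qu and Richt\'arik obtain the result. Your handling of the degenerate case $q_i=0$ (restrict to the support, since PSD forces the corresponding row/column of $\mathbf{P}(\hat S)$ to vanish) is also the standard one. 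Nothing is missing.
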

Here $\mathbf{P}(\hat{S})$ is called a sampling matrix (see \cite{richtarik2012parallel}) where element $p_{ij}$ is defined to be
$
    p_{ij} = \textstyle{\sum}_{\{i,j\}\in S, S\in \hat{S}} P(S).
$
For any matrix $M$, $\lambda'(M)$ denotes the maximal regularized eigenvalue of $M$, i.e.,
$
    \lambda'(M) = \max_{\norm{h}=1}\{h^TMh: \textstyle{\sum}_{i=1}^nM_{ii}h_i^2 \leq 1\}.
$
We may now apply Theorem \ref{thm: v-ESO} because $f(x) = \tfrac{1}{2}\norm{Ax}^2$ satisfies its assumption. Note that in our mini-batch setting, we have $P_{S\in \hat{S}}(|S| = b) = 1$, so we obtain $\lambda'(\mathbf{P}(\hat{S})) \leq  b$ (Theorem 4.1 in  \cite{qu2014coordinate}).
In terms of $\lambda'(A^TA)$, note that
$
    \lambda'(A^TA) = \lambda'(\sum_{j=1}^mx_jx_j^T)  \leq \max_j\lambda'(x_jx_j^T) = \max_j|J_j|,
$
where $|J_j|$ is number of non-zero elements of $x_j$ for each $j$. Then, a conservative choice from Theorem \ref{thm: v-ESO} that satisfies \eqref{eq: mini-ESO} is
\begin{equation}\label{ESO, v}
    v_i' = \min\{b, \max_j|J_j|\}\norm{x_i}^2, \qquad i\in [n].
\end{equation}

Now we are ready to give our complexity result for mini-batch adfSDCA (Algorithm \ref{Alg: mini-adfSDCA}). Note that we use the same notation as that established in Section \ref{sec:convergence_Analysis} and we also define
\begin{equation}\label{eq:Qprime}
  Q' \eqdef \frac1n \sum_{i=1}^n v_i'.
\end{equation}

\begin{theorem}\label{thm: minibatch-all convex}
Let $\tilde{L}$, $\kappa_i^{(t)}$, $\gamma$ $D\ttt$, $v_i'$, $C_0$ and $Q'$ be as defined in \eqref{eq:LLtilde}, \eqref{eq:dualresidue}, \eqref{eq:gamma}, \eqref{eq:D}, \eqref{ESO, v}, \eqref{eq:C0} and \eqref{eq:Qprime}, respectively. Suppose that $\phi_i$ is $L$-smooth and convex for all $i \in [n]$. Then, at every iteration $t\geq 0$ of Algorithm~\ref{Alg: mini-adfSDCA}, run with batchsize $b$ we have
    \begin{equation}
\E[D^{(t+1)}|\alpha\ttt] \leq (1- \theta^*)D^{(t)},
\end{equation}
where $\theta^* = \tfrac{n \lambda^2b}{\sum_{i=1}^n(v_i'\gamma + n \lambda^2)}$. Moreover, it follows that whenever
\begin{equation}
        T\geq \bigg(\frac{n}{b} + \frac{\tilde{L}Q'}{b\lambda}\bigg)\log\bigg(\frac{(\lambda +\tilde{L})C_0}{\lambda \tilde{L} \epsilon}\bigg),
    \end{equation}
we have that $\E[P(w^{(T)}-P(w^*))]\leq \epsilon$.
\end{theorem}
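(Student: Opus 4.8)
The plan is to mirror the argument for the serial algorithm --- Lemma~\ref{thm: seperate convex} followed by Theorem~\ref{cor: all convex} --- the only genuinely new ingredient being the Expected Separable Overapproximation of Section~\ref{sec:eso}, which is what controls the variance of the primal update when $b>1$ coordinates are touched at once. Concretely: first derive a mini-batch analogue of the one-step inequality \eqref{eq: separate convex conclusion}; then plug in the optimal sampling and step-size via Lemma~\ref{lem: optimal probabilities}; then lower-bound the resulting contraction factor; and finally unroll the recursion and convert to function values.

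First I would fix $t$, condition on $\alpha\ttt$, and expand $D^{(t+1)} = \tfrac1n\norm{\alpha^{(t+1)}-\alpha^*}^2 + \gamma\norm{w^{(t+1)}-w^*}^2$ using the updates of Algorithm~\ref{Alg: mini-adfSDCA}. Taking expectation over the mini-batch $S$ and using that $S$ is consistent with the marginals $q_i = bp_i\ttt$ (so $i\in S$ with probability $bp_i\ttt$), the dual part contributes $\tfrac1n\norm{\alpha\ttt-\alpha^*}^2 - \tfrac{2\theta}{n}\sum_i \kappa_i\ttt(\alpha_i\ttt-\alpha_i^*) + \tfrac{\theta^2}{n}\sum_i \tfrac{(\kappa_i\ttt)^2}{bp_i\ttt}$. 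In the primal part the linear term again collapses to $-\tfrac{2\theta\gamma}{n\lambda}\langle w\ttt-w^*,\sum_i\kappa_i\ttt x_i\rangle$, while the quadratic term $\theta^2\gamma\,\E\big[\norm{\sum_{i\in S}(n\lambda b p_i\ttt)^{-1}\kappa_i\ttt x_i}^2\big]$ is bounded, via the ESO inequality \eqref{eq: mini-ESO} applied with $h_i = (n\lambda b p_i\ttt)^{-1}\kappa_i\ttt$ and the ESO constants $v_i'$ from \eqref{ESO, v}, by $\tfrac{\theta^2\gamma}{n^2\lambda^2}\sum_i \tfrac{v_i'(\kappa_i\ttt)^2}{bp_i\ttt}$. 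Combining the two parts, subtracting $(1-\theta)D\ttt$, and rewriting the cross terms exactly as in the serial proof --- using $w\ttt = \tfrac1{\lambda n}\sum_i\alpha_i\ttt x_i$ and the fact that the dual residue vanishes at the optimum ($\alpha_i^* = -\phi_i'(x_i^Tw^*)$) to identify $\kappa_i\ttt-(\alpha_i\ttt-\alpha_i^*) = \phi_i'(x_i^Tw\ttt)-\phi_i'(x_i^Tw^*)$ and $\sum_i\kappa_i\ttt x_i = \sum_i(\phi_i'(x_i^Tw\ttt)-\phi_i'(x_i^Tw^*))x_i + n\lambda(w\ttt-w^*)$, and then invoking co-coercivity $\tilde L(\phi_i'(a)-\phi_i'(b))(a-b)\ge(\phi_i'(a)-\phi_i'(b))^2$ (from convexity and $\tilde L$-smoothness of $\phi_i$) together with $\gamma=\lambda\tilde L$ --- the ``non-variance'' terms cancel against a nonpositive remainder, leaving
\[
\E[D^{(t+1)} \mid \alpha\ttt] - (1-\theta)D\ttt \;\le\; \sum_{i=1}^n\left(-\tfrac{\theta}{n}\Big(1-\tfrac{\theta}{bp_i\ttt}\Big) + \tfrac{\theta^2 v_i'\gamma}{n^2\lambda^2 bp_i\ttt}\right)(\kappa_i\ttt)^2 ,
\]
which is precisely \eqref{eq: separate convex conclusion} with $p_i\ttt$ replaced by $bp_i\ttt$ and $v_i$ replaced by $v_i'$.

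Next I would make this right-hand side nonpositive. As in Section~\ref{sec:analysisCaseI}, a short rearrangement shows nonpositivity holds as soon as $\theta \le n\lambda^2 b\sum_{i\in I_\kappa}\kappa_i^2 \big/ \sum_{i\in I_\kappa}(n\lambda^2+v_i'\gamma)p_i^{-1}\kappa_i^2$, i.e.\ as soon as $\theta$ does not exceed the function $\Theta$ of \eqref{eq:Theta} evaluated with $v_i'$ in place of $v_i$ and rescaled by $b$. Maximizing this over $p$ with $\sum_{i\in I_\kappa}p_i=1$ is exactly the optimization solved by Lemma~\ref{lem: optimal probabilities} (again with $v_i\to v_i'$), giving $p_i^*\propto\sqrt{v_i'\gamma+n\lambda^2}\,|\kappa_i|$ and the optimal step $\theta\ttt$ of \eqref{eq: optimal batch theta}. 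A single Cauchy--Schwarz bound, $\big(\sum_i\sqrt{v_i'\gamma+n\lambda^2}\,|\kappa_i|\big)^2 \le \big(\sum_i(v_i'\gamma+n\lambda^2)\big)\big(\sum_i\kappa_i^2\big)$, then yields $\theta\ttt \ge \theta^* := n\lambda^2 b \big/ \sum_{i=1}^n(v_i'\gamma+n\lambda^2)$, hence $\E[D^{(t+1)}\mid\alpha\ttt] \le (1-\theta^*)D\ttt$.

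Finally, taking total expectations and unrolling gives $\E[D^{(T)}] \le (1-\theta^*)^T D^{(0)} \le e^{-\theta^* T}C_0$ (recall $D^{(0)}=C_0$). Since $\nabla P(w^*)=0$ and $P$ is $(\lambda+L)$-smooth, $P(w^{(T)})-P(w^*) \le \tfrac{\lambda+L}{2}\norm{w^{(T)}-w^*}^2 \le \tfrac{\lambda+L}{2\gamma}D^{(T)}$, so $\E[P(w^{(T)})-P(w^*)] \le \tfrac{\lambda+L}{2\gamma}e^{-\theta^* T}C_0$. Using $\sum_{i=1}^n v_i' = nQ'$ and $\gamma=\lambda\tilde L$ one checks that $1/\theta^* = \tfrac1b\big(n+\tfrac{\tilde L Q'}{\lambda}\big)$, so imposing $\tfrac{\lambda+L}{2\gamma}e^{-\theta^* T}C_0 \le \epsilon$ and solving for $T$ gives the stated bound (the precise constant inside the logarithm being immaterial). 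The main obstacle is the second paragraph: propagating the $1/b$ factors through the variance terms and substituting the ESO constants $v_i'$ for $v_i$ so that the mini-batch one-step inequality lines up cleanly with the serial one, which is what lets Lemma~\ref{lem: optimal probabilities} be reused verbatim; once that alignment is in place, everything downstream is a rerun of the serial analysis.
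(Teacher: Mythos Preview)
Your proposal is correct and follows essentially the same route as the paper's proof: derive mini-batch analogues of \eqref{eq: E[A]} and \eqref{eq: E[B]} (with the ESO inequality \eqref{eq: mini-ESO} controlling the primal quadratic term), combine them into a one-step inequality of the form \eqref{eq: separate convex conclusion} with $p_i\to bp_i$ and $v_i\to v_i'$, reuse Lemma~\ref{lem: optimal probabilities} for the optimal probabilities, lower-bound the step by Cauchy--Schwarz to get $\theta^*$, and unroll. The only cosmetic difference is that you bound the cross terms via co-coercivity of each $\phi_i$, whereas the paper invokes the self-bounding inequality \eqref{eq: strong ineq} together with convexity of $P$; these are equivalent consequences of convexity plus $\tilde L$-smoothness and yield the identical one-step bound.
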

It is also possible to derive a complexity result in the case when the \emph{average} of the $n$ loss functions is convex. The theorem is stated now.

\begin{theorem}\label{thm: minibatch-seprate convex}
Let $L$, $\kappa_i^{(t)}$, $\bar\gamma$ $\bar D\ttt$, $v_i'$, $\bar C_0$ and $Q'$ be as defined in \eqref{eq:LLtilde}, \eqref{eq:dualresidue}, \eqref{eq:bargamma}, \eqref{eq:barD}, \eqref{ESO, v}, \eqref{eq:barC0} and \eqref{eq:Qprime} respectively. Suppose that every $\phi_i, i\in [n]$ is $L_i$-smooth and that the average of the $n$ loss functions $\tfrac{1}{n}\sum_{i=1}^n\phi_i(w^Tx_i)$ is convex. Then, at every iteration $t\geq 0$ of Algorithm~\ref{Alg: mini-adfSDCA}, run with batchsize $b$, we have
\begin{equation}
\E[\bar D^{(t+1)}|\alpha\ttt] \leq (1- \theta^*)\bar D^{(t)},
\end{equation}
where $\theta^* = \tfrac{n \lambda^2b}{\sum_{i=1}^n(v_i'\bar\gamma + n \lambda^2)}$. Moreover, it follows that whenever
\begin{equation}\label{eq:minibatchnonconvexT}
        T\geq \bigg(\frac{n}{b} + \frac{Q'\frac{1}{n}\textstyle{\sum}_{i=1}^nL_i^2}{b\lambda}\bigg)\log\bigg(\frac{(\lambda +\tilde{L})\bar C_0}{\bar \gamma \epsilon}\bigg),
    \end{equation}
we have that $\E[P(w^{(T)}-P(w^*)]\leq \epsilon$.
\end{theorem}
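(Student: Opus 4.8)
The plan is to mirror the argument used for Theorem~\ref{cor: aver-convex}, but replacing the single-coordinate variance bound by the mini-batch ESO inequality \eqref{eq: mini-ESO} with the parameter $v_i'$ from \eqref{ESO, v}. First I would fix an iteration $t$, condition on $\alpha\ttt$, and expand the expected change in the potential function $\bar D\ttt$. The updates in Algorithm~\ref{Alg: mini-adfSDCA} give $\alpha_i^{(t+1)}-\alpha_i^{(t)} = -\theta\ttt(bp_i\ttt)^{-1}\kappa_i\ttt$ for $i\in S$ and $w^{(t+1)}-w\ttt = -\sum_{i\in S}\theta\ttt(n\lambda b p_i\ttt)^{-1}\kappa_i\ttt x_i$. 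Using that the marginal probability of $i$ appearing in $S$ is $q_i = bp_i$, the expectation of the $\alpha$-term over the random mini-batch reproduces, up to the factor $b$ cancelling against $q_i$, the same linear and quadratic terms that appear in the serial analysis; the cross term involving convexity of the average loss is handled exactly as in the proof of Lemma~\ref{thm: average convex}. The only genuinely new ingredient is controlling $\E[\|\sum_{i\in S}(bp_i)^{-1}\kappa_i x_i\|^2 \mid \alpha\ttt]$, and this is precisely where \eqref{eq: mini-ESO} enters: applying it with $h_i = (bp_i)^{-1}\kappa_i$ yields $\E[\|A h_{\hat S}\|^2] \le \sum_{i=1}^n v_i' q_i h_i^2 = \sum_{i=1}^n v_i'(bp_i)^{-1}\kappa_i^2$.

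Carrying this through, I expect to obtain the per-iteration bound
\begin{equation*}
\E[\bar D^{(t+1)}\mid\alpha\ttt] - (1-\theta)\bar D\ttt \le \sum_{i=1}^n\left(-\frac{\theta}{n}\left(1-\frac{\theta}{b p_i\ttt}\right) + \frac{\theta^2 v_i' \bar\gamma}{n^2\lambda^2 b p_i\ttt}\right)(\kappa_i\ttt)^2,
\end{equation*}
which is the direct mini-batch analogue of \eqref{eq: average convex conclusion}: every occurrence of $p_i$ is replaced by $b p_i$ and $v_i$ by $v_i'$. From here the derivation of the optimal probabilities proceeds just as in Lemma~\ref{lem: optimal probabilities}: maximizing the resulting $\theta$ over $p$ subject to $\sum_{i\in I_\kappa}p_i = 1$ gives the same closed form $p_i^*(\kappa)\propto \sqrt{v_i'\bar\gamma + n\lambda^2}\,|\kappa_i|$ (with $v_i'$ in place of $v_i$), and substituting this choice makes the right-hand side nonpositive for any $\theta \le \theta^* := n\lambda^2 b / \sum_{i=1}^n(v_i'\bar\gamma + n\lambda^2)$. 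This yields the linear rate $\E[\bar D^{(t+1)}\mid\alpha\ttt]\le(1-\theta^*)\bar D\ttt$, hence $\E[\bar D\ttt] \le (1-\theta^*)^t\bar C_0$.

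The final step is to convert the potential-function decay into an $\epsilon$-bound on $\E[P(w^{(T)})-P(w^*)]$. Here I would use $L$-smoothness and convexity of the average loss to relate $P(w)-P(w^*)$ to $\|w-w^*\|^2$ (up to constants involving $\lambda$, $L$, and $\bar\gamma$), exactly as in the proof of Theorem~\ref{cor: aver-convex}; since $\bar\gamma\|w\ttt-w^*\|^2 \le \bar D\ttt$, a bound of the form $\E[P(w^{(T)})-P(w^*)] \le \tfrac{\lambda+\tilde L}{2\bar\gamma}\E[\bar D^{(T)}] \le \tfrac{(\lambda+\tilde L)\bar C_0}{2\bar\gamma}(1-\theta^*)^T$ follows. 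Requiring the right-hand side to be at most $\epsilon$ and using $1-\theta^* \le e^{-\theta^*}$ gives $T \ge \tfrac{1}{\theta^*}\log\!\big(\tfrac{(\lambda+\tilde L)\bar C_0}{2\bar\gamma\epsilon}\big)$; finally, bounding $\tfrac{1}{\theta^*} = \tfrac{1}{b}\big(\tfrac{n}{b}\cdot\tfrac{b}{n} + \ldots\big)$ — more precisely $\tfrac{1}{\theta^*} = \tfrac{\sum_i(v_i'\bar\gamma + n\lambda^2)}{n\lambda^2 b} = \tfrac{n}{b} + \tfrac{\bar\gamma Q'}{b\lambda^2}$ with $\bar\gamma = \tfrac1n\sum_i L_i^2$ — reproduces the stated iteration complexity \eqref{eq:minibatchnonconvexT} (the mild discrepancy in the logarithm's constant, $\bar\gamma$ versus $2\bar\gamma$, is absorbed harmlessly). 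The main obstacle I anticipate is verifying that the ESO inequality \eqref{eq: mini-ESO} with the conservative $v_i'$ from \eqref{ESO, v} is legitimately applicable here — i.e., that the sampling $\hat S$ produced by Algorithm~\ref{Alg: Non-uniform Mini-batch sampling} has $|S| = b$ almost surely so that $\lambda'(\mathbf P(\hat S)) \le b$, and that the marginals genuinely equal $b p_i$; everything else is a bookkeeping variant of the serial proof.
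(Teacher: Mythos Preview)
Your proposal is correct and follows essentially the same route as the paper: split $\bar D^{(t+1)}-\bar D^{(t)}$ into the $A$- and $B$-parts, use the mini-batch marginals $q_i=bp_i$ together with the ESO bound \eqref{eq: mini-ESO} (with $v_i'$) to control the squared-norm term in the $B$-part, invoke \eqref{eq:barC1vsw} for the average-convex cross term, and then optimize $\theta$ over $p$ exactly as in Lemma~\ref{lem: optimal probabilities} to obtain $\theta^*$ and the iteration count. Your caution about verifying that the sampling of Algorithm~\ref{Alg: Non-uniform Mini-batch sampling} really has $|S|=b$ and marginals $bp_i$ (so that $\lambda'(\mathbf P(\hat S))\le b$ and \eqref{ESO, v} applies) is well placed; the paper takes this for granted, and your computation $1/\theta^* = n/b + \bar\gamma Q'/(b\lambda^2)$ is the correct one.
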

These theorems show that in worst case (by setting $b=1$), this mini-batch scheme shares the same complexity performance as the serial adfSDCA approach (recall Section \ref{sec:algorithm_adfSDCA}). However, when the batch-size $b$ is larger, Algorithm~\ref{Alg: mini-adfSDCA} converges in fewer iterations. This behaviour will be confirmed computationally in the numerical results given in Section \ref{sec:numerical_experiments}.

\section{Numerical experiments} 
\label{sec:numerical_experiments}

Here we present numerical experiments to demonstrate the practical performance of the adfSDCA algorithm. Throughout these experiments we used two loss functions, quadratic loss $ \phi_i(w^Tx_i) = \tfrac{1}{2}(w^Tx_i-y_i)^2$ and logistic loss $\phi_i(w^Tx_i) = \log(1+\exp(-y_iw^Tx_i))$. The experiments were run using datasets from the standard library of test problems (see \cite{Chang11} and \url{http://www.csie.ntu.edu.tw/~cjlin/libsvm}), as summarized in Table~\ref{tab:Datasets used in experiments}.\\

    \begin{table}[ht]
        \centering
        \begin{tabular}{l|c c c c}
            \hline
            \textbf{Dataset} & \textbf{\#samples} & \textbf{\#features} & \textbf{\#classes} & \textbf{sparsity}\\
            \hline
            \textbf{w8a} & $49,749$ & $300$ & $2$ & $3.91\%$\\
            \textbf{mushrooms} & $8,124$ & $112$ & $2$ & $18.8\%$\\
            \textbf{ijcnn1} & $49,990$ & $22$ & $2$ & $59.1\%$\\
             \textbf{rcv1} & $20,242$ & $47,237$ & $2$ & $0.16\%$\\
            \textbf{news20} & $19,996$ & $1,355,191$ & $2$ & $0.034\%$\\
            \hline
        \end{tabular}
        \caption{The datasets used in the numerical experiments \cite{Chang11}.}
        \label{tab:Datasets used in experiments}
    \end{table}

\subsection{Comparison for a variety of dfSDCA approaches}
In this section we compare the adfSDCA algorithm (Algorithm~\ref{Alg: adfSDCA}) with both dfSCDA, which is a uniform variant of adfSDCA described in \cite{DBLP:journals/corr/Shalev-Shwartz15}, and also with Prox-SDCA from \cite{shalev2014accelerated}. We also report results using Algorithm \ref{Alg: adfSDCA+}, which is a heuristic version of adfSDCA, used with several different shrinking parameters.

Figures \ref{Fig:evolution 1} and \ref{Fig:evolution 2} compare the evolution of the duality gap for the standard and heuristic variant of our adfSDCA algorithm with the two state-of-the-art algorithms dfSDCA and Prox-SDCA. For these problems both our algorithm variants out-perform the dfSDCA and Prox-SDCA algorithms. Note that this is consistent with our convergence analysis (recall Section \ref{sec:convergence_Analysis}). Now consider the adfSDCA\texttt{+} algorithm, which was tested using the parameter values $s=1,10,20$. It is clear that adfSDCA\texttt{+} with $s=1$ shows the worst performance, which is reasonable because in this case the algorithm only updates the sampling probabilities after each epoch; it is still better than dfSDCA since it utilizes the sub-optimality at the beginning of each epoch. On the other hand, there does not appear to be an obvious difference between adfSDCA\texttt{+} used with $s=10$ or $s=20$ with both variants performing similarly. We see that adfSDCA performs the best overall in terms of the number of passes through the data. However, in practice, even though adfSDCA\texttt{+} may need more passes through the data to obtain the same sub-optimality as adfSDCA, it requires less computational effort than adfSDCA.
\begin{figure}[ht]  \centering
    \includegraphics[width=0.3\textwidth]{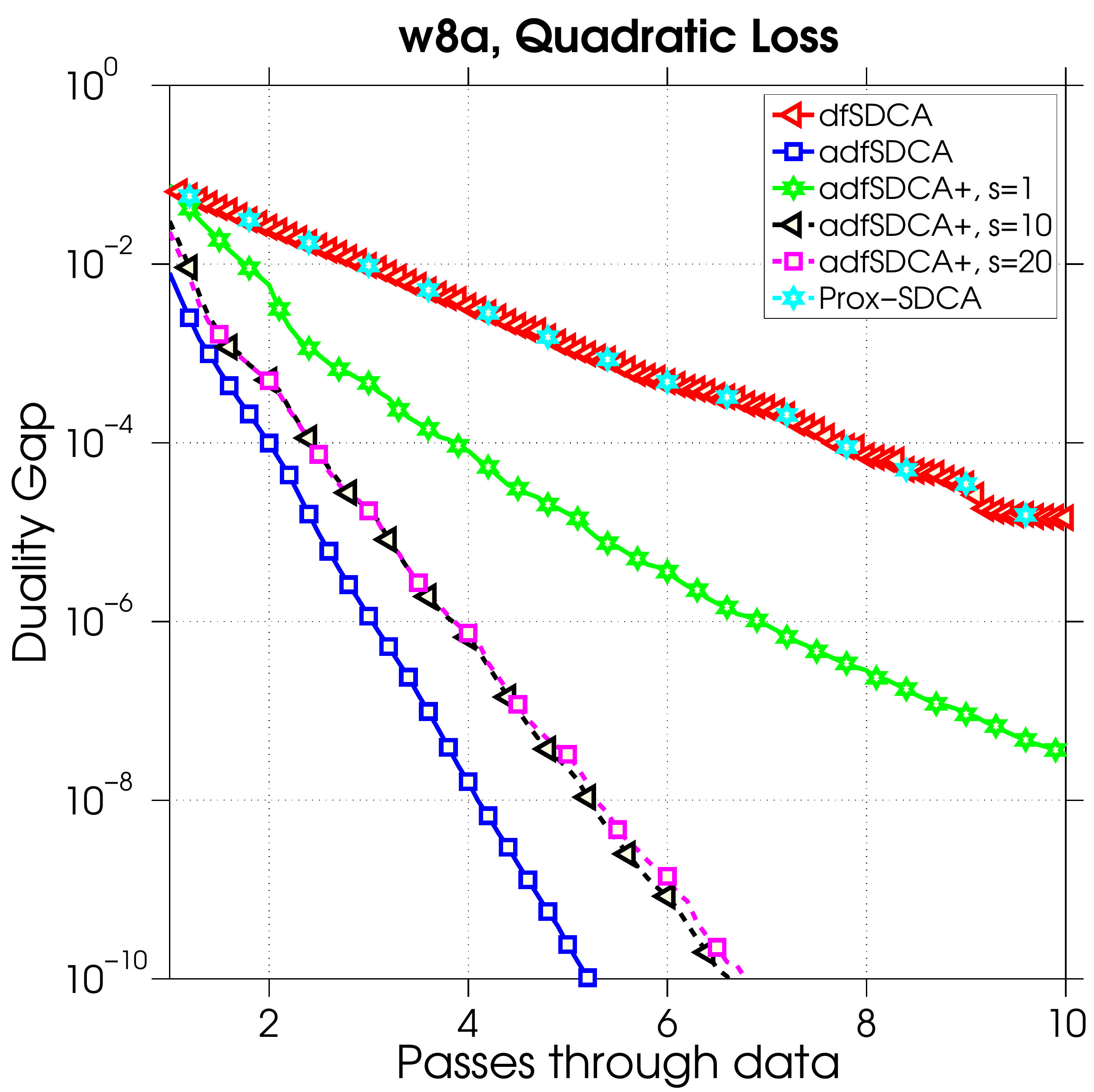}
    \includegraphics[width=0.3\textwidth]{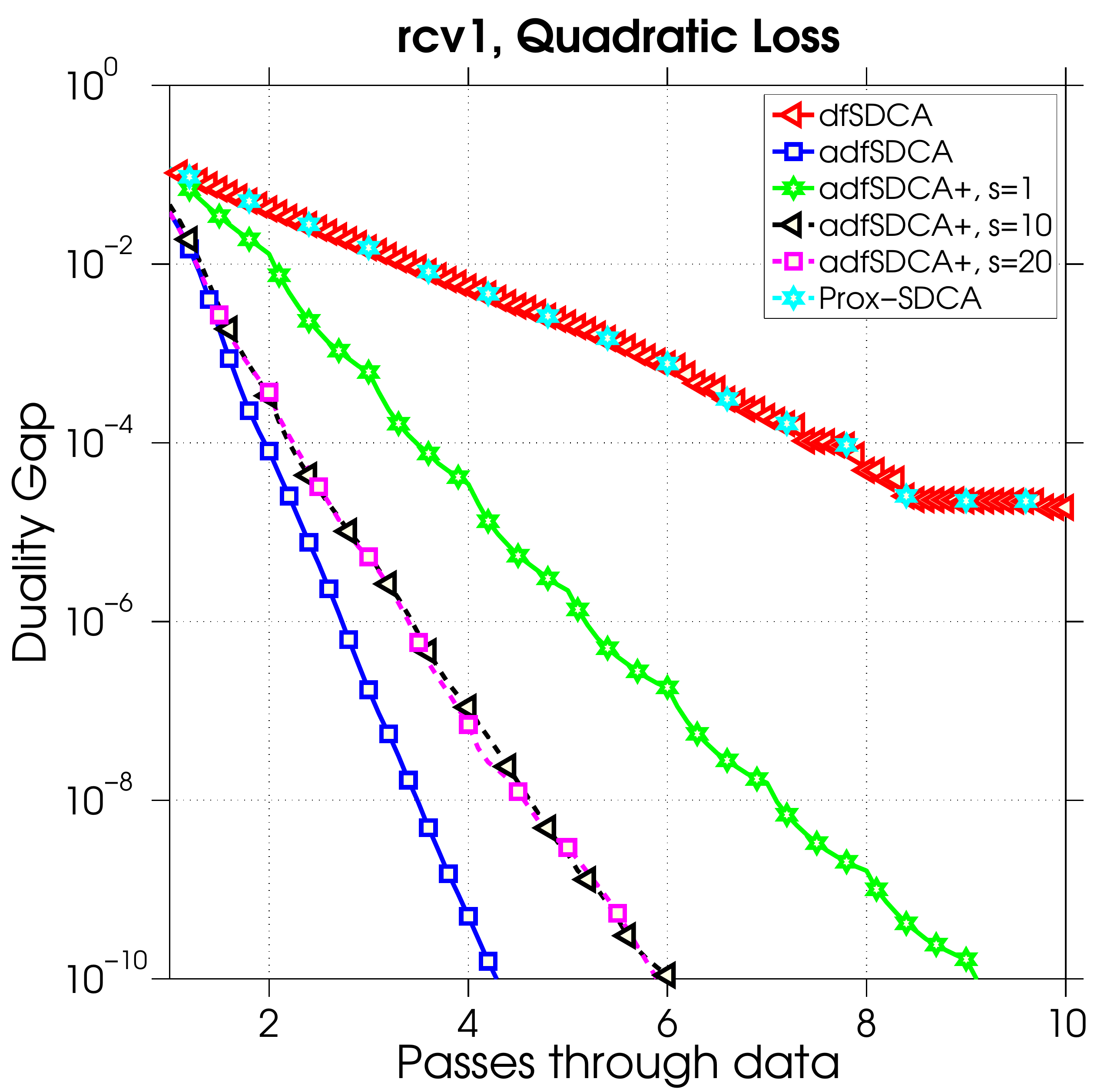}
    \includegraphics[width=0.3\textwidth]{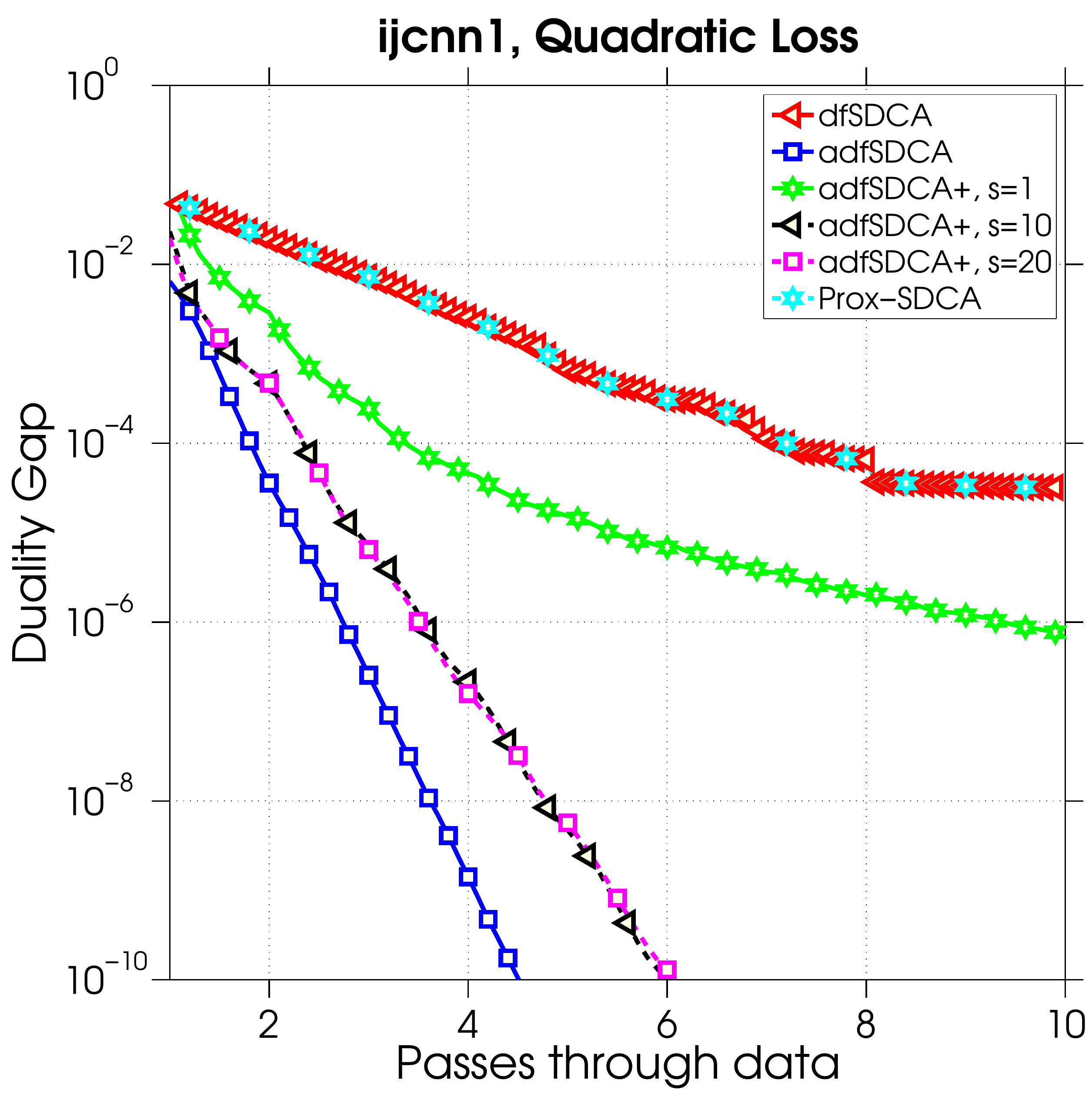}
        \caption{A comparison of the number of epochs versus the duality gap for the various algorithms.}
        \label{Fig:evolution 1}
\end{figure}

\begin{figure}[ht] \centering
        \includegraphics[width=0.3\textwidth]{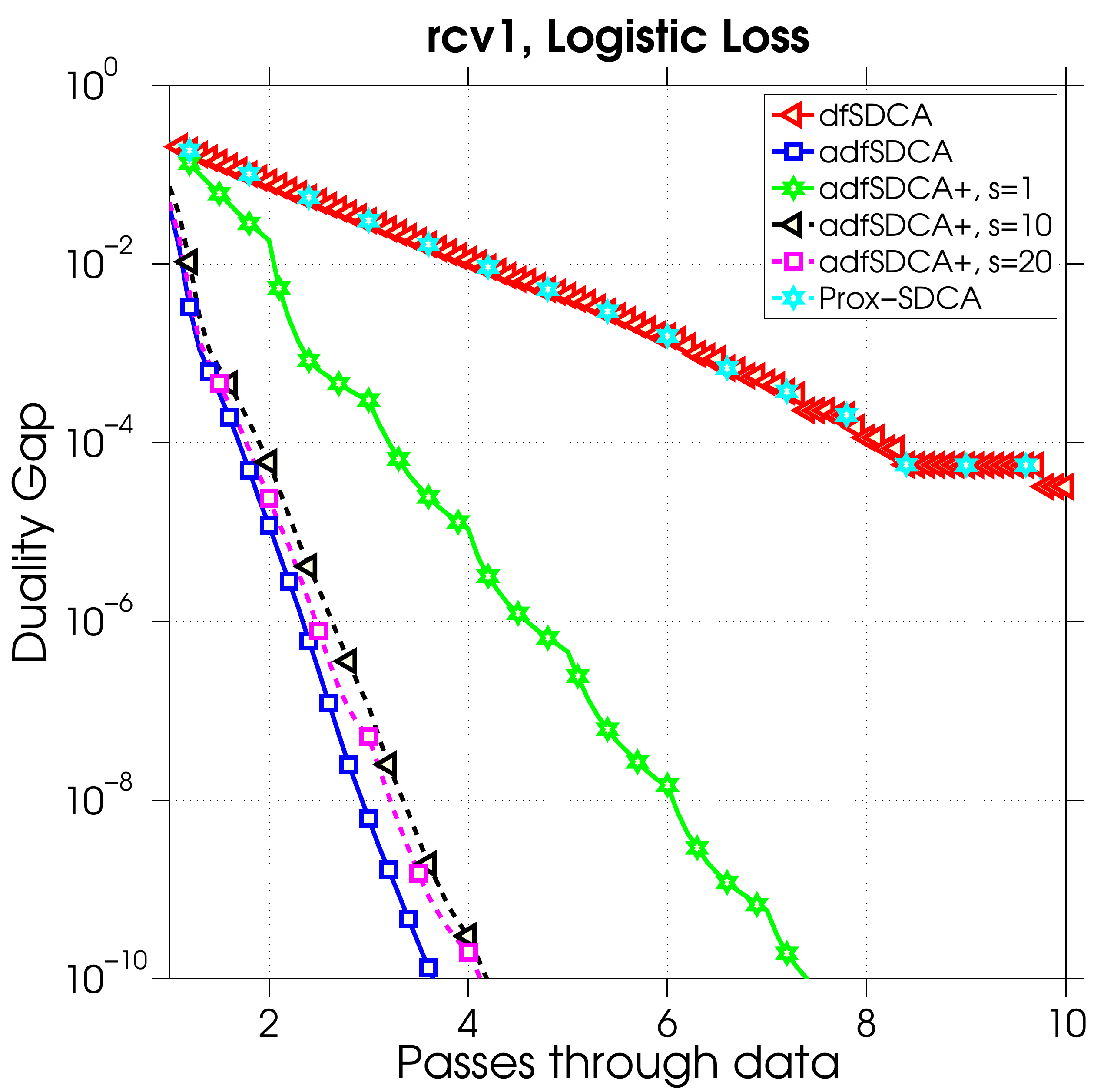}
        \includegraphics[width=0.3\textwidth]{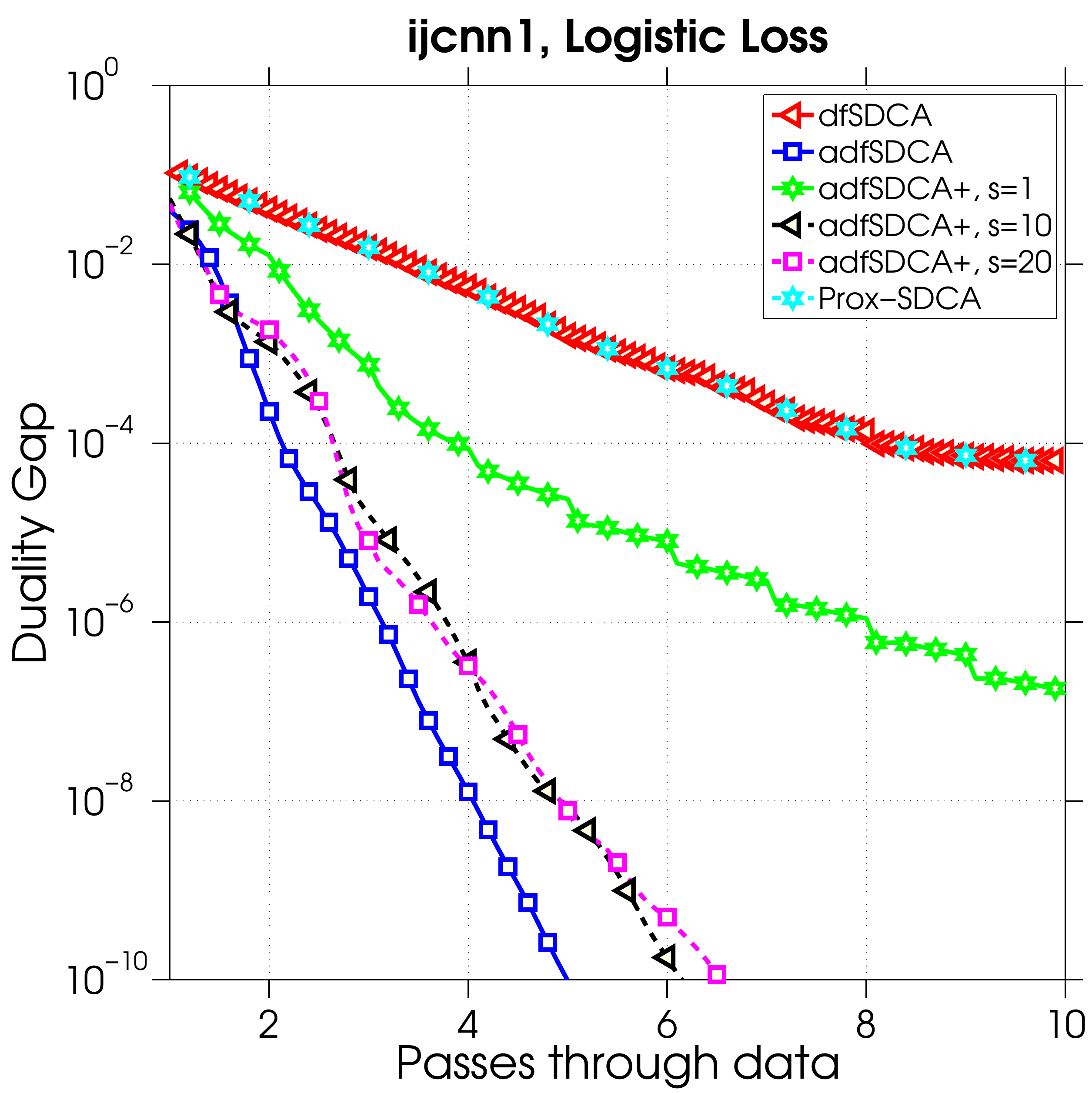}
     \includegraphics[width=0.3\textwidth]{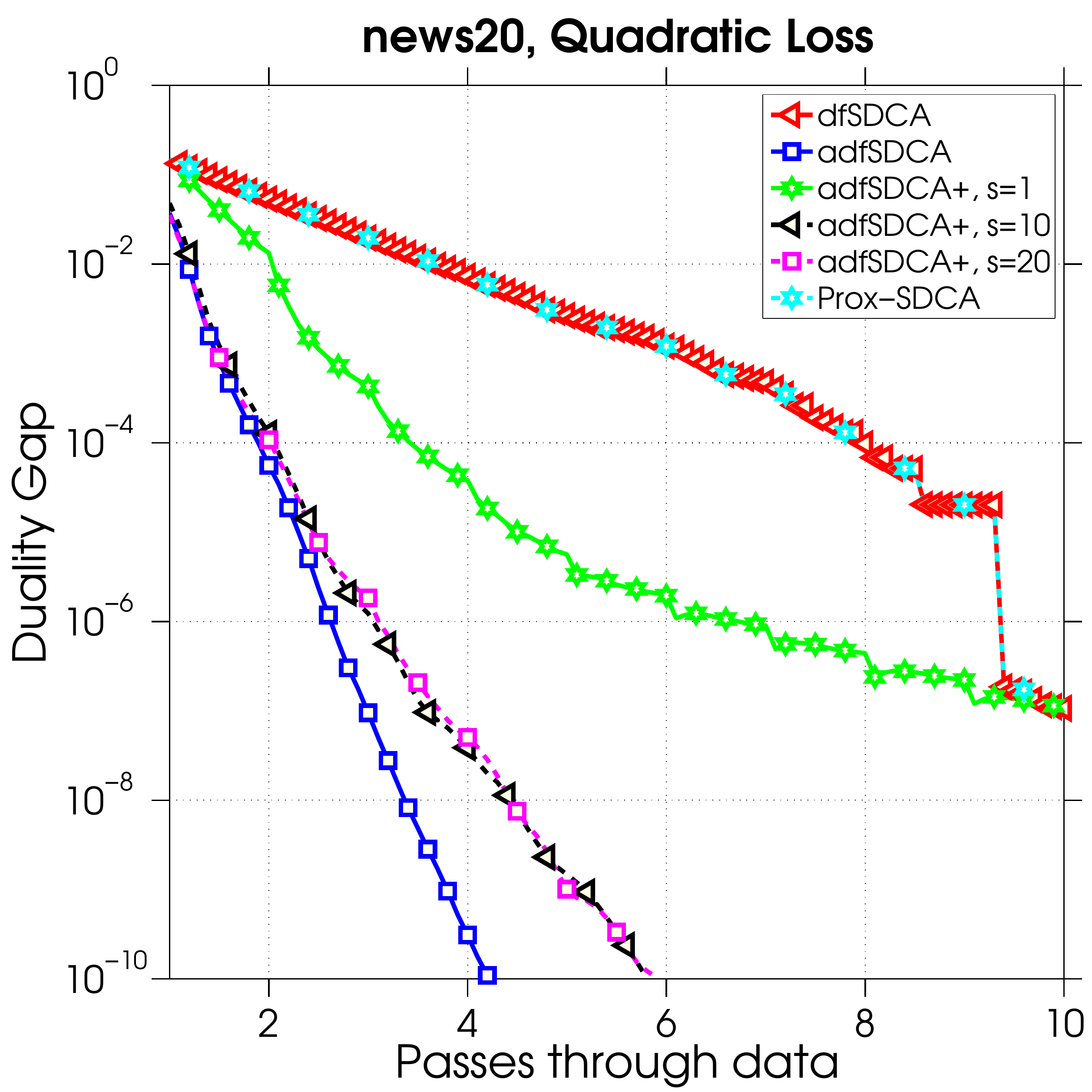}
        \caption{A comparison of the number of epochs versus the duality gap for the various algorithms.}
        \label{Fig:evolution 2}
\end{figure}
Figure \ref{Fig:residuals} shows the estimated density function
of the dual residue $|\vc{\kappa}{t}|$ after $1,2,3,4$ and $5$ epochs for both uniform dfSDCA and our adaptive adfSDCA. One observes that the adaptive scheme is pushing the large residuals towards zero much faster than uniform dfSDCA. For example, notice that after $2$ epochs, almost all residuals are below $0.03$ for adfSDCA, whereas for uniform dfSDCA there are still many residuals larger than $0.06$. This is evidence that, by using adaptive probabilities we are able to update the coordinate with a high dual residue more often and therefore reduce the sub-optimality much more efficiently.
\begin{figure}[ht]\centering
    \includegraphics[width=0.3\textwidth]{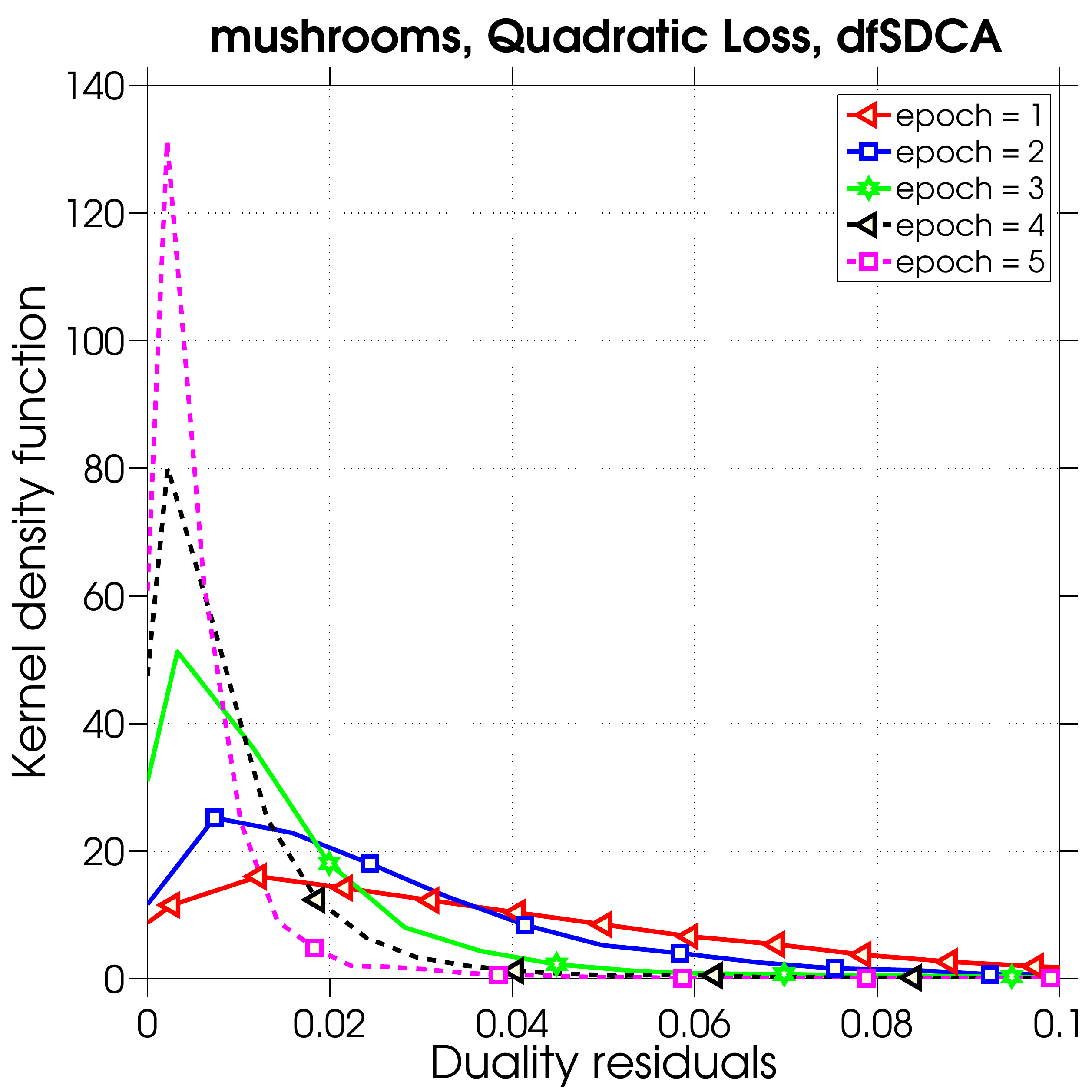}
    \includegraphics[width=0.3\textwidth]{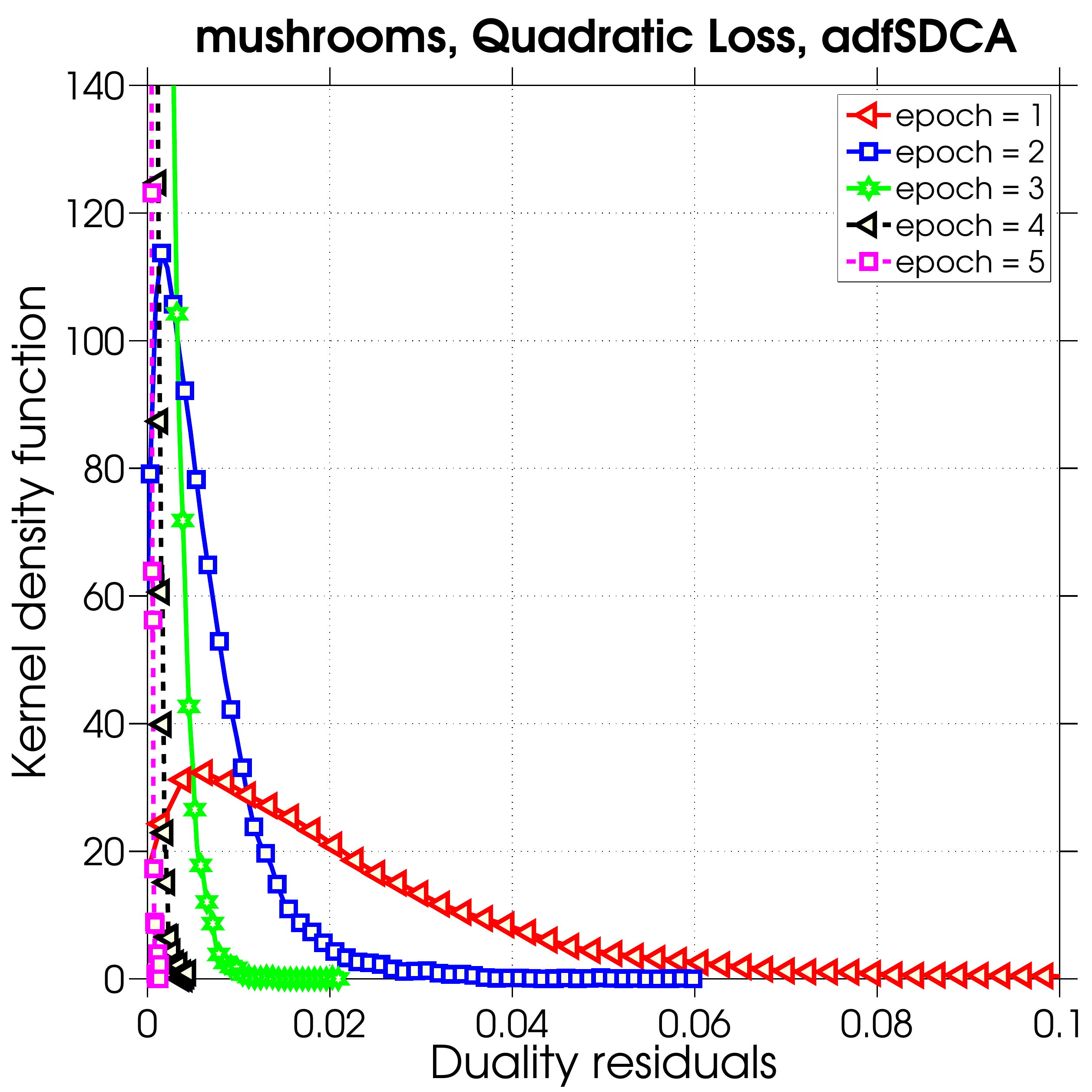}
    \caption{Comparing absolute value of dual residuals at each epoch between dfSDCA and adfSDCA.}
    \label{Fig:residuals}
\end{figure}

\subsection{Mini-batch SDCA}
Here we investigate the behaviour of the mini-batch adfSDCA algorithm (Algorithm \ref{Alg: mini-adfSDCA}). In particular, we compare the practical performance of mini-batch adfSDCA using different mini-batch sizes $b$ varying from $1$ to $32$. Note that if $b=1$, then Algorithm \ref{Alg: mini-adfSDCA} is equivalent to the  adfSDCA algorithm (Algorithm \ref{Alg: adfSDCA}). Figures \ref{exp: minibatch quad 2} and \ref{exp: minibatch quad 3} show that, with respect to the different batch sizes, the mini-batch algorithm with each batch size needs roughly the same number of passes through the data to achieve the same sub-optimality. However, when considering the computational time, the larger the batch size is, the faster the convergence will be. Recall that the results in Section \ref{sec:mini_batch_adfsdca} show that the number of iterations needed by Algorithm \ref{Alg: mini-adfSDCA} used with a batch size of $b$ is roughly $1/b$ times the number of iterations needed by adfSDCA. Here we compute the adaptive probabilities every $b$ samples, which leads to roughly the same number of passes through the data to achieve the same sub-optimality.

\begin{figure}[ht]\centering
     \includegraphics[width=0.3\textwidth]{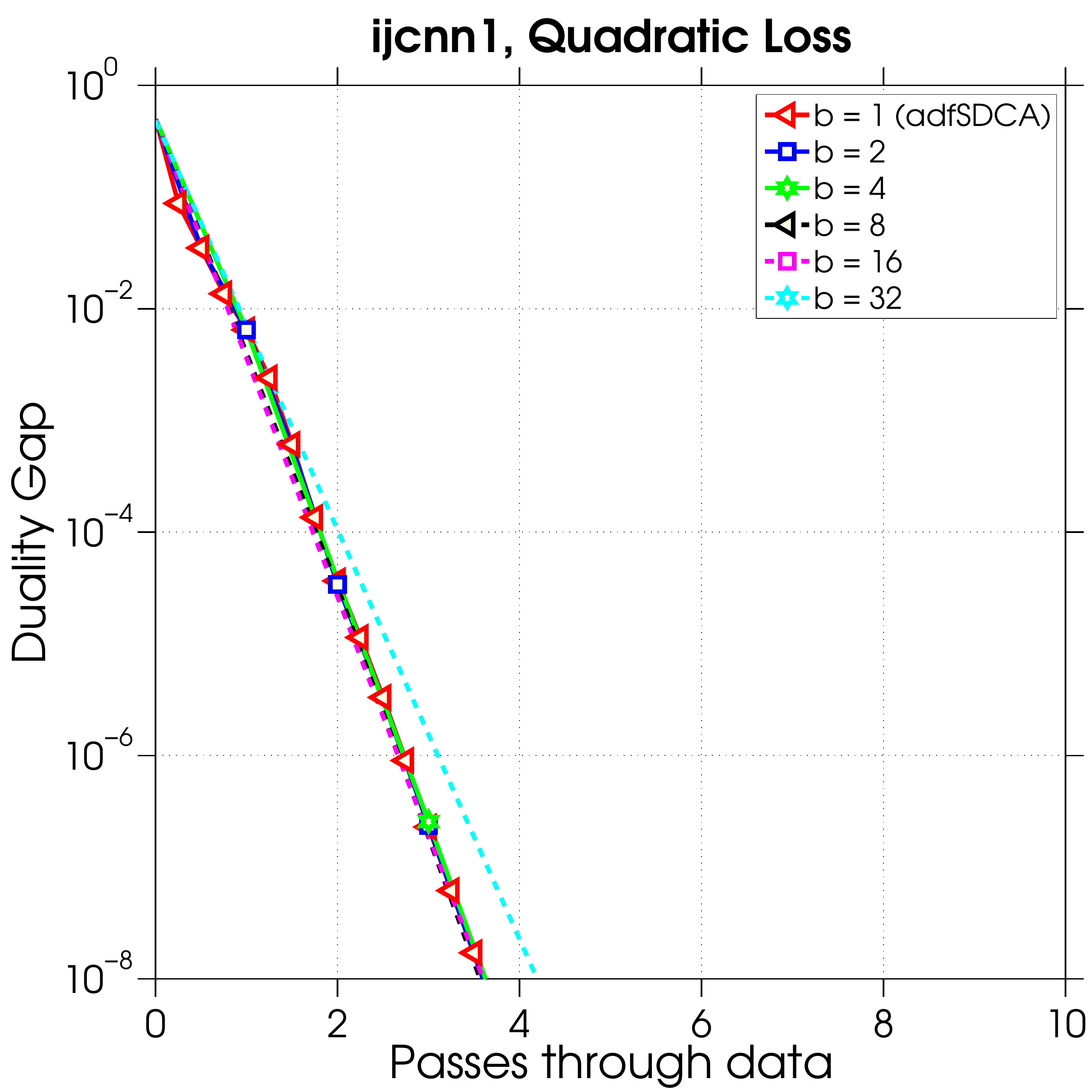}
     \includegraphics[width=0.3\textwidth]{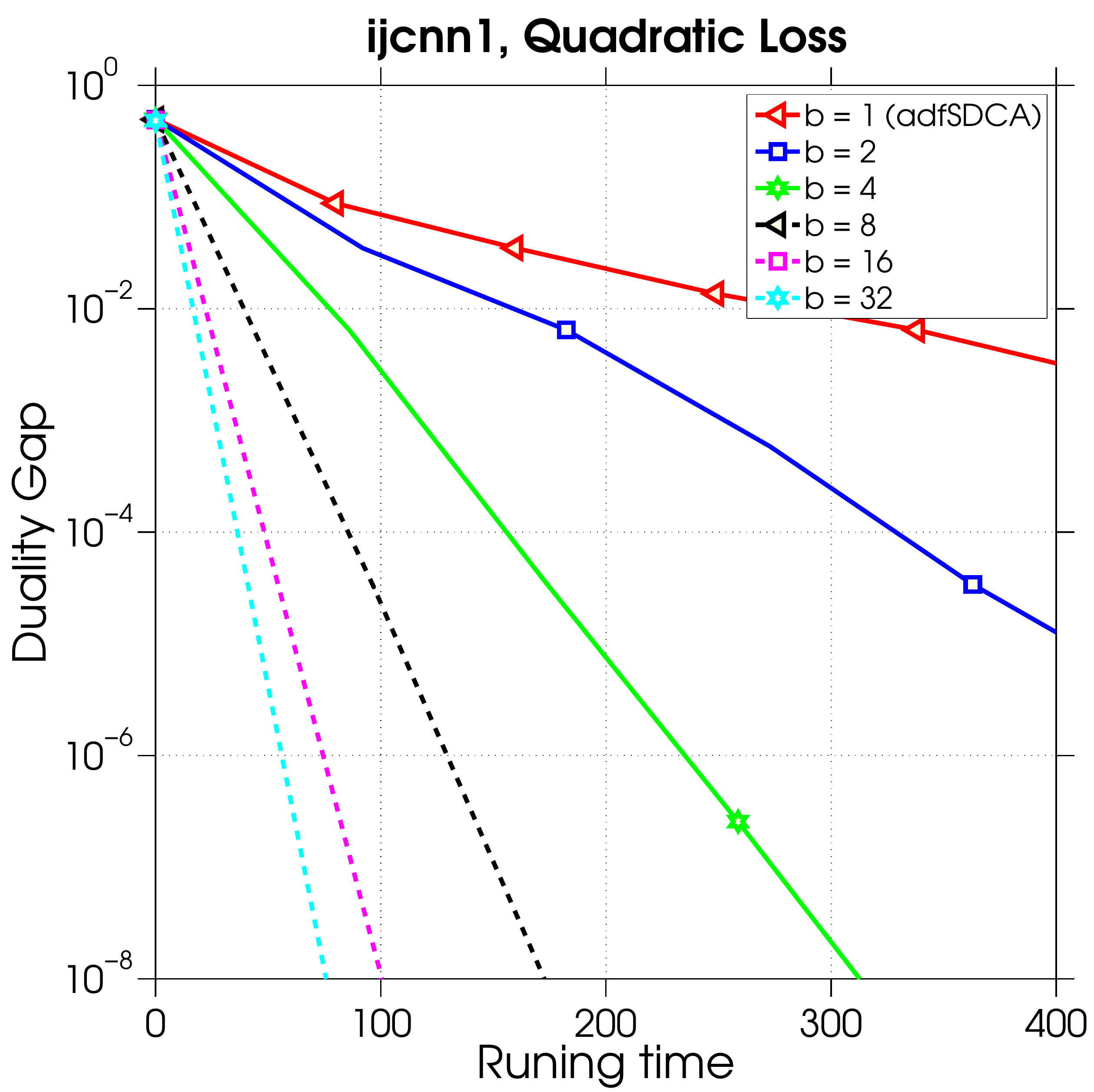}
        \caption{Comparing the number of iterations of various batch size on a quadratic loss.}
        \label{exp: minibatch quad 2}
\end{figure}

\begin{figure}[ht]\centering
    \includegraphics[width=0.3\textwidth]{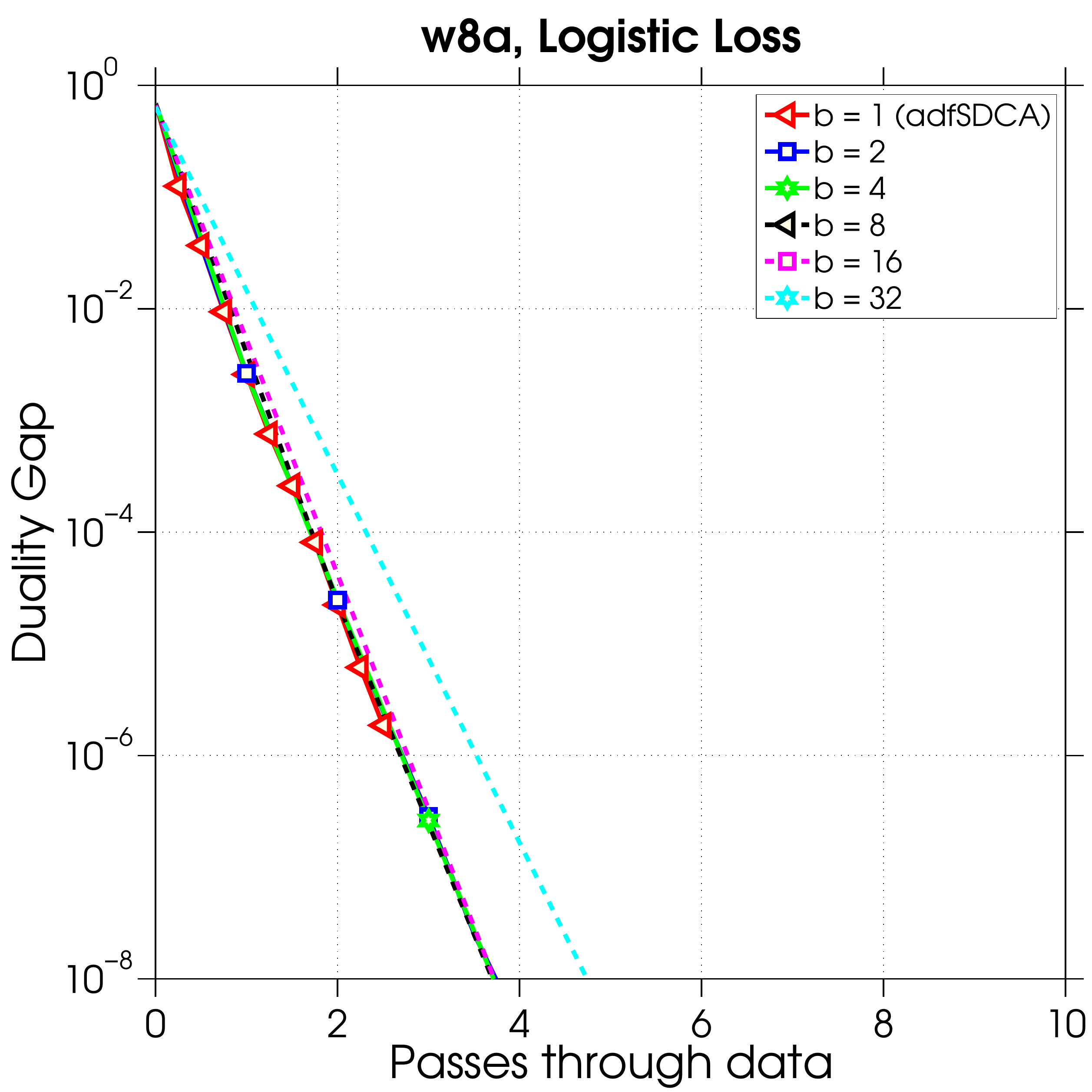}
    \includegraphics[width=0.3\textwidth]{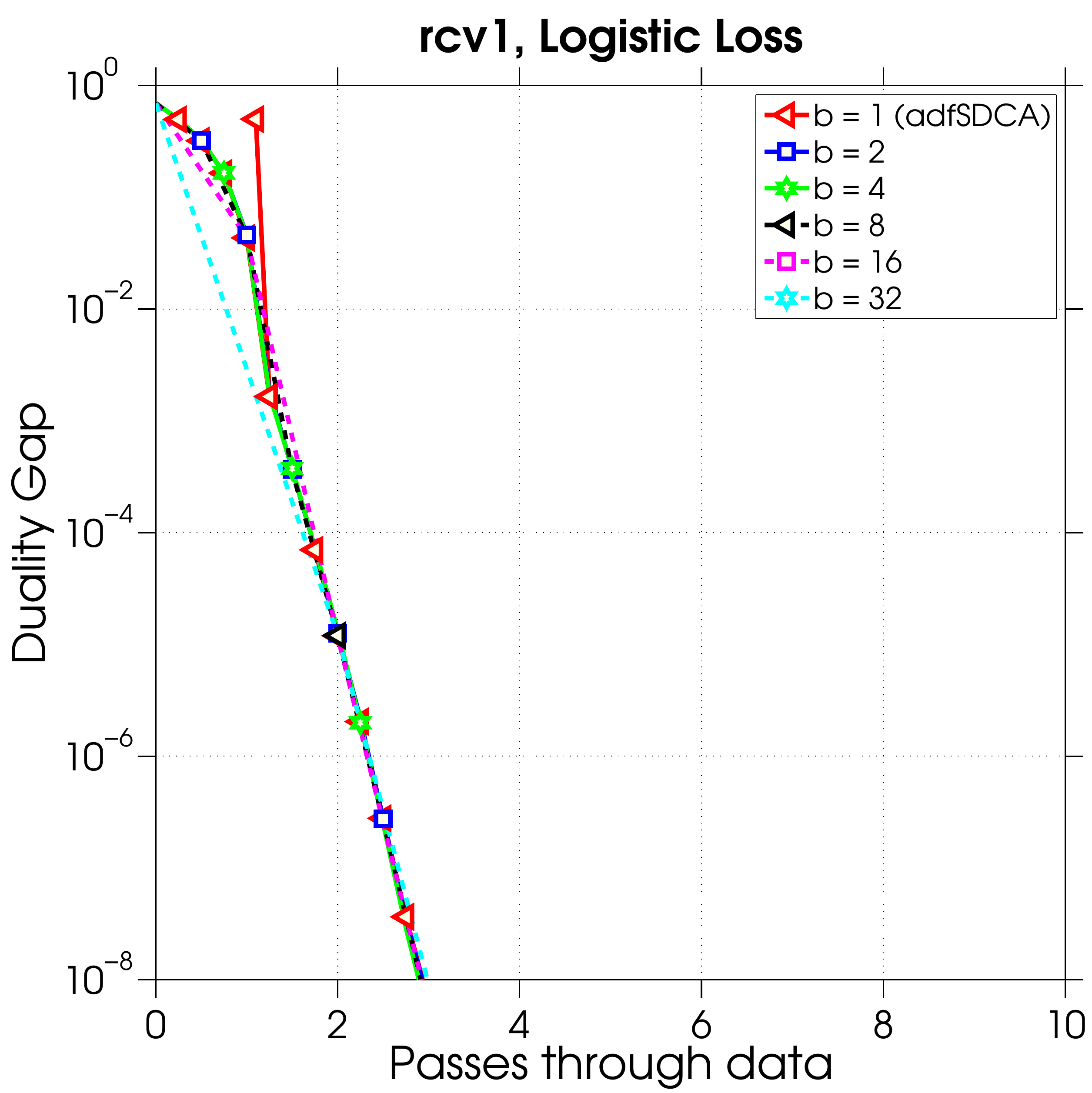}\\
    \includegraphics[width=0.3\textwidth]{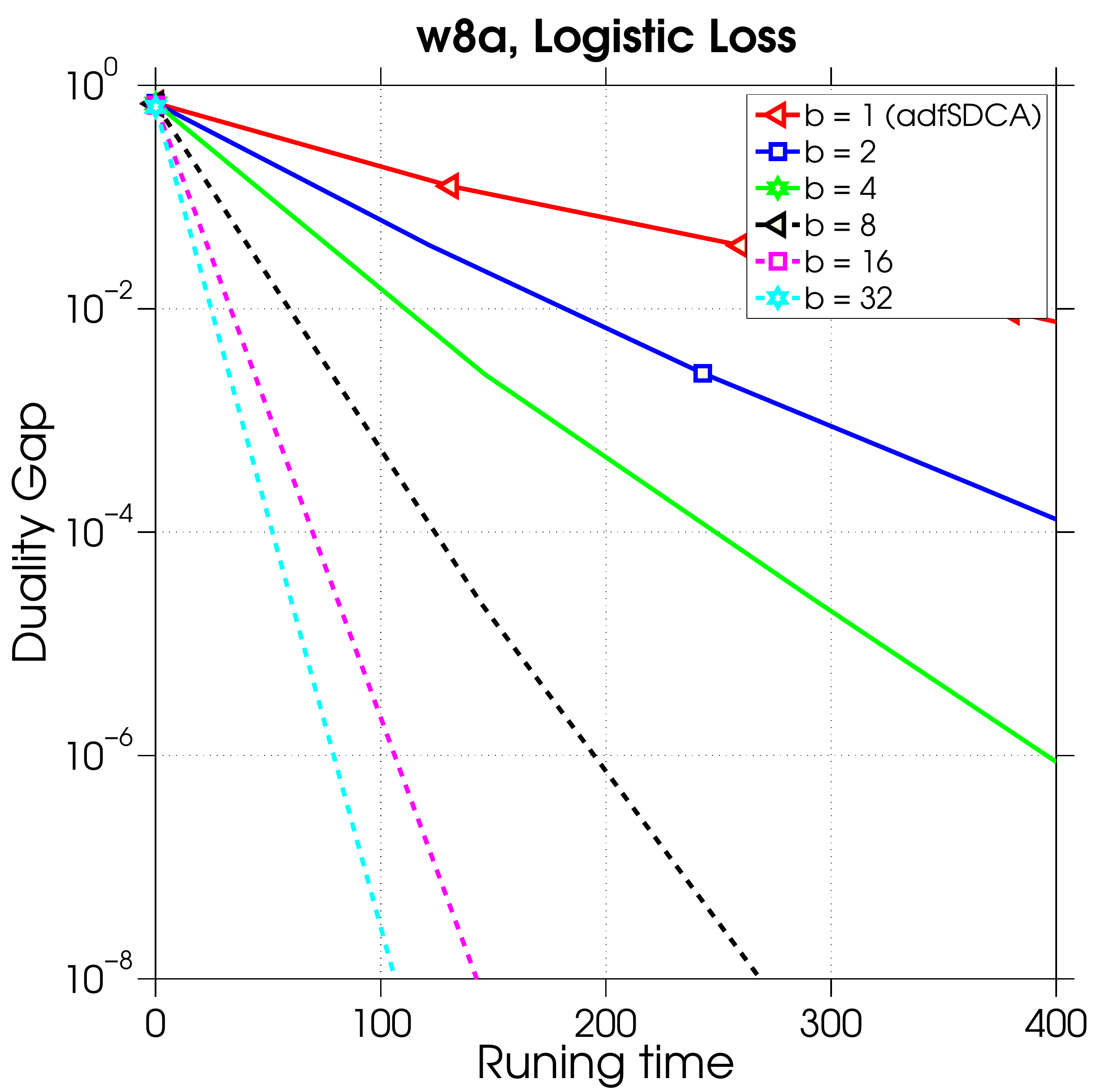}
    \includegraphics[width=0.3\textwidth]{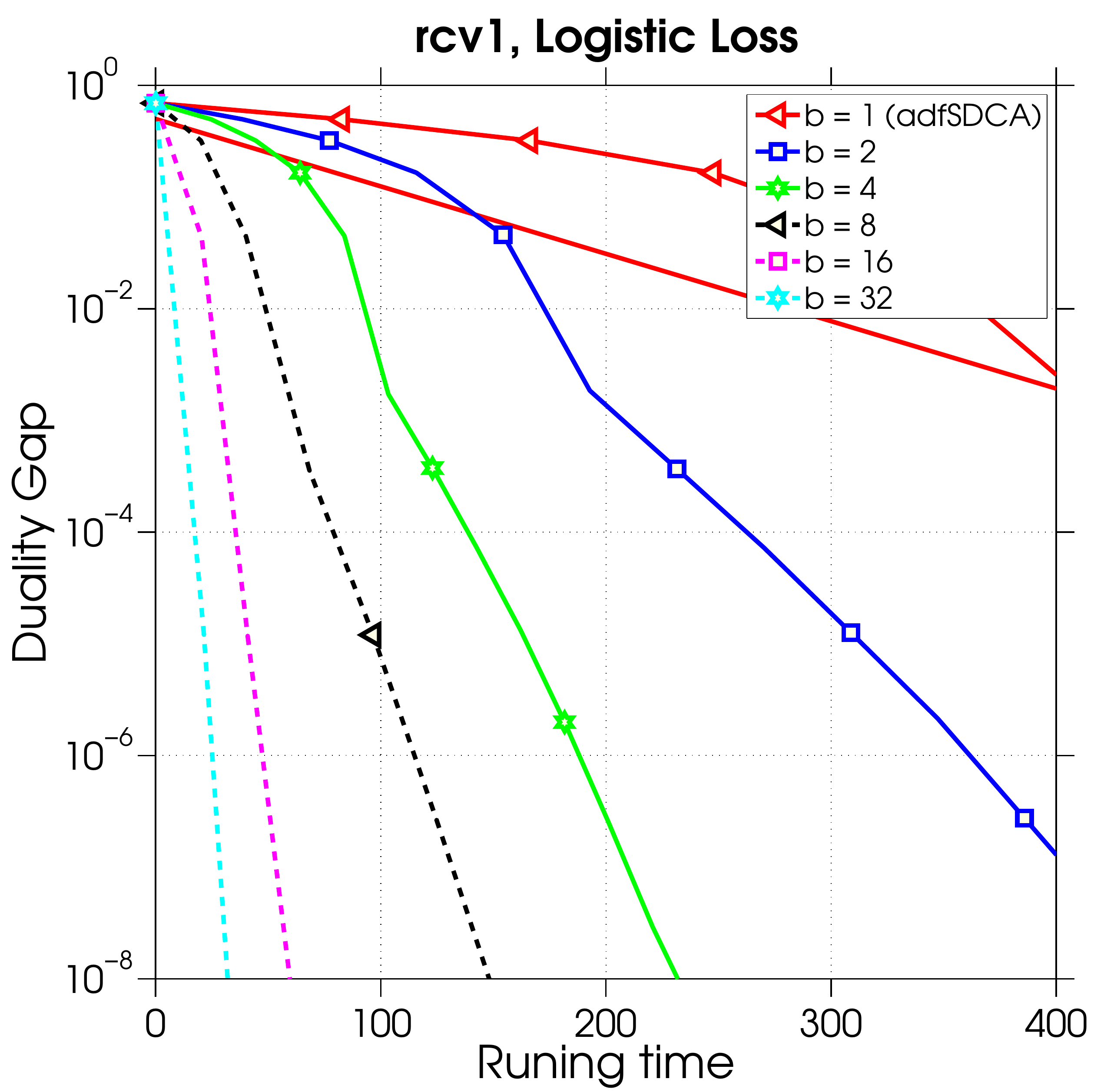}
        \caption{Comparing number of iterations among various batch size on logistic loss.}
        \label{exp: minibatch quad 3}
\end{figure}

\section*{Acknowledgement} We would like to thank Professor Alexander L. Stolyar for his insightful help with Algorithm~\ref{Alg: Non-uniform Mini-batch sampling}. The material is based upon work supported by the U.S. National Science Foundation, under award number NSF:CCF:1618717, NSF:CMMI:1663256 and NSF:CCF:1740796.

\bibliographystyle{frontiersinSCNS_ENG_HUMS}
\bibliography{ref}

\clearpage

\appendix
\onecolumn

\renewcommand\thesection{\Alph{section}}
\renewcommand\thesubsection{\thesection.\arabic{subsection}}
\section{Appendix}
\subsection{Preliminaries and Technical Results}
\label{sec:appendix}
Recall that $w^*$ denotes an optimum of \eqref{Prob: L2EMR} and define $\alpha_i^* = -\phi'_i(x_i^Tw^*)$. To simplify the proofs we introduce the following variables
\begin{align}\label{eq:AB}
    A^{(t)} = \tfrac{1}{n}\norm{\alpha^{(t)}-\alpha^*}^2 \m{and} B^{(t)} = \norm{w^{(t)}-w^*}^2.
\end{align}
At the optimum $w^*$, it holds that $0 = \nabla P(w^*) = \frac{1}{n}\sum_{i=1}^n\phi'_i(x_i^Tw^*)x_i + \lambda w^*$, so $w^* = \frac{1}{\lambda n}\sum_{i=1}^n\alpha_i^*x_i$. Define $u_i^{(t)} \eqdef -\phi'_i(x_i^Tw^{(t)})$, and therefore we have $\kappa_i^{(t)} = \alpha_i^{(t)} - u_i^{(t)}$ and $u_i^* = \alpha_i^*$.

The following two lemmas will be useful when proving our main results.
\begin{lemma}
Let $A\ttt$ and $B\ttt$ be defined in \eqref{eq:AB}, and let $v_i = \norm{x_i}^2$ for all $i\in[n]$. Then, conditioning on $\alpha\ttt$, the following hold for given $\theta$:
     \begin{align}
        \E[A^{(t+1)}|\alpha\ttt] - A^{(t)} &= - \theta A^{(t)}+ \frac{\theta}{n}\sum_{i=1}^n\Big((u_i^{(t)}-\alpha_i^*)^2 - \big(1- \tfrac{\theta}{p_i}\big)(\kappa_i^{(t)})^2\Big),\label{eq: E[A]}\\
        \E[B^{(t+1)}|\alpha\ttt] - B^{(t)}  & = -\tfrac{2 \theta }{\lambda}(w^{(t)}-w^*)^T \nabla P(w^{(t)}) + \sum_{i=1}^n \frac{\theta^2v_i}{n^2 \lambda^2p_i}\big(\kappa^{(t)}_i\big)^2.\label{eq: E[B]}
    \end{align}
\end{lemma}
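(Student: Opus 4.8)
The plan is to prove both identities by direct expansion of the squared norms, exploiting that in any single iteration only one coordinate of $\alpha$ changes (and correspondingly only a rank-one multiple of $x_i$ is subtracted from $w$), and then averaging over the randomly chosen coordinate $i$ with respect to $p\ttt$.

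First I would establish \eqref{eq: E[A]}. Conditioned on $\alpha\ttt$, coordinate $i$ is drawn with probability $p_i$, and the update \eqref{alphaupdate} sets $\alpha_i^{(t+1)} = \alpha_i\ttt - \theta p_i^{-1}\kappa_i\ttt$ while leaving the other components unchanged; hence when $i$ is selected,
$\norm{\alpha^{(t+1)}-\alpha^*}^2 = \norm{\alpha\ttt-\alpha^*}^2 - 2\theta p_i^{-1}\kappa_i\ttt(\alpha_i\ttt-\alpha_i^*) + \theta^2 p_i^{-2}(\kappa_i\ttt)^2$. Averaging over $i$ with weights $p_i$ (terms with $\kappa_i\ttt=0$, and hence $p_i=0$, contribute nothing) the factor $p_i^{-1}$ collapses in the linear term and reduces to $p_i^{-1}$ in the quadratic term, giving
$\E[A^{(t+1)}|\alpha\ttt] - A\ttt = -\tfrac{2\theta}{n}\sum_{i=1}^n\kappa_i\ttt(\alpha_i\ttt-\alpha_i^*) + \tfrac{\theta^2}{n}\sum_{i=1}^n p_i^{-1}(\kappa_i\ttt)^2$. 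Now I would substitute $\alpha_i\ttt-\alpha_i^* = \kappa_i\ttt + (u_i\ttt-\alpha_i^*)$, which holds because $\kappa_i\ttt = \alpha_i\ttt-u_i\ttt$ and $u_i^* = \alpha_i^*$, expand the cross term, and recognize that $-\tfrac{\theta}{n}\sum_i\big(\kappa_i\ttt+(u_i\ttt-\alpha_i^*)\big)^2 = -\theta A\ttt$. Collecting the leftover terms then yields exactly $-\theta A\ttt + \tfrac{\theta}{n}\sum_i\big((u_i\ttt-\alpha_i^*)^2 - (1-\tfrac{\theta}{p_i})(\kappa_i\ttt)^2\big)$, as claimed.

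Next I would treat \eqref{eq: E[B]} in the same fashion. From \eqref{wupdate}, when $i$ is selected $w^{(t+1)} = w\ttt - \theta(n\lambda p_i)^{-1}\kappa_i\ttt x_i$, so
$\norm{w^{(t+1)}-w^*}^2 = \norm{w\ttt-w^*}^2 - 2\theta(n\lambda p_i)^{-1}\kappa_i\ttt\ve{x_i}{w\ttt-w^*} + \theta^2(n\lambda p_i)^{-2}(\kappa_i\ttt)^2\norm{x_i}^2$, and averaging over $i$ gives
$\E[B^{(t+1)}|\alpha\ttt] - B\ttt = -\tfrac{2\theta}{n\lambda}\big\langle\sum_{i=1}^n\kappa_i\ttt x_i,\, w\ttt-w^*\big\rangle + \tfrac{\theta^2}{n^2\lambda^2}\sum_{i=1}^n p_i^{-1}\norm{x_i}^2(\kappa_i\ttt)^2$. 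The second term is already the desired one since $v_i = \norm{x_i}^2$. For the first term the key identity is $\tfrac1n\sum_i\kappa_i\ttt x_i = \nabla P(w\ttt)$, which follows by writing $\phi_i'(x_i^Tw\ttt) = \kappa_i\ttt - \alpha_i\ttt$ and using the primal-dual map \eqref{eq:walphaPDmap}, i.e.\ $w\ttt = \tfrac{1}{\lambda n}\sum_i\alpha_i\ttt x_i$; this is precisely the relation behind \eqref{eq: nabla(P)}. Substituting it converts the first term into $-\tfrac{2\theta}{\lambda}(w\ttt-w^*)^T\nabla P(w\ttt)$, which completes the derivation.

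The computations are routine; the only step that requires attention to detail is the first one, where one must recombine the expanded terms so that the pure $\norm{\alpha\ttt-\alpha^*}^2$ contribution is cleanly isolated as $-\theta A\ttt$ and the residual is written in the precise form involving $(u_i\ttt-\alpha_i^*)^2$ and $(1-\theta/p_i)(\kappa_i\ttt)^2$. I do not anticipate any genuine obstacle beyond this bookkeeping.
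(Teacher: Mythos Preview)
Your proposal is correct and follows essentially the same approach as the paper: expand the squared norms using the single-coordinate updates, average over the random index, and for the $B$-part invoke the identity $\tfrac{1}{n}\sum_i\kappa_i\ttt x_i = \nabla P(w\ttt)$. The only cosmetic difference is in the bookkeeping for \eqref{eq: E[A]}: the paper rewrites $\alpha_i^{(t+1)}-\alpha_i^* = (1-\tfrac{\theta}{p_i})(\alpha_i\ttt-\alpha_i^*) + \tfrac{\theta}{p_i}(u_i\ttt-\alpha_i^*)$ and applies the scalar identity $((1-\lambda)a+\lambda b)^2 = (1-\lambda)a^2 + \lambda b^2 - \lambda(1-\lambda)(a-b)^2$ \emph{before} taking expectation, whereas you expand first and regroup afterwards; both routes land on the same expression.
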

\begin{proof}

    Note that at iteration $t$, only coordinate $i$ (of $\alpha$) is updated, so
    \begin{eqnarray}\label{eq:intA}
      A^{(t+1)} =\tfrac{1}{n}\norm{\alpha^{(t+1)}-\alpha^*}^2 \equiv \frac1n \sum_{j\neq i} (\alpha_j\ttt- \alpha_j^*)^2  + \frac1n (\alpha_i^{(t+1)} - \alpha_i^*)^2.
    \end{eqnarray}
    Thus,
    \begin{align}
        A^{(t+1)} - A^{(t)}
        \overset{\eqref{eq:intA}}{=}&\tfrac{1}{n}(\alpha_i^{(t+1)}-\alpha_i^*)^2 - \tfrac{1}{n}(\alpha_i^{(t)}-\alpha_i^*)^2\notag\\
        =&\tfrac{1}{n}\Big(\alpha_i^{(t)} - \tfrac{\theta}{p_i}\kappa_i^{(t)}-\alpha_i^*\Big)^2 - \tfrac{1}{n}(\alpha_i^{(t)} - \alpha_i^*)^2\notag\\
        =&\tfrac{1}{n}\left(\big(1-\tfrac{\theta}{p_i}\big)(\alpha_i^{(t)}-\alpha_i^*) - \tfrac{\theta}{p_i}(\kappa_i^{(t)}-\alpha_i^*)\right)^2 - \tfrac{1}{n}(\alpha_i^{(t)} - \alpha_i^*)^2\notag\\
        =&\tfrac{1}{n}\left(1- \tfrac{\theta}{p_i}\right)(\alpha_i^{(t)}-\alpha_i^*)^2+\tfrac{\theta}{np_i}(u_i^{(t)}-\alpha_i^*)^2-\left(1- \tfrac{\theta}{p_i}\right)\tfrac{\theta}{np_i}(\kappa_i^{(t)})^2 - \tfrac{1}{n}(\alpha_i^{(t)} - \alpha_i^*)^2\notag\\
        =&-\tfrac{\theta}{np_i}(\alpha_i^{(t)}-\alpha_i^*)^2+\tfrac{\theta}{np_i}\left((u_i^{(t)}-\alpha_i^*)^2 - \left(1- \tfrac{\theta}{p_i}\right)(\kappa_i^{(t)})^2\right).
    \end{align}
    Taking expectation over $i\in[n]$, conditioned on $\alpha^{(t)}$, gives the first result.

    To obtain the second result consider
    \begin{eqnarray*}
        B^{(t+1)} - B^{(t)} &\overset{\eqref{eq:AB}}{=}& \norm{w^{(t+1)}-w^*}^2 - \norm{w^{(t)}-w^*}^2\\
        &\overset{\eqref{wupdate}}{=}&\norm{w^{(t)} - \tfrac{\theta}{n \lambda p_i}\kappa_i^{(t)}x_i-w^*}^2 - \norm{w^{(t)}-w^*}^2\\
        &=& -\tfrac{2 \theta}{n \lambda p_i}\kappa_i^{(t)}x_i^T(w^{(t)}-w^*)+ \tfrac{\theta^2v_i}{n^2 \lambda^2 p_i^2}(\kappa_i^{(t)})^2.
    \end{eqnarray*}
     Recall that $\E[\frac{1}{np_i}\kappa_i^{(t)}x_i] = \nabla P(w^{(t)})$ by \eqref{eq: nabla(P)}. Thus, taking expectation over $i\in [n]$, conditioned on $w^{(t)}$, gives \eqref{eq: E[B]}.\qed
\end{proof}

The following Lemma and proof are similar to \cite[Lemma~4]{csiba2015primal} and \cite[Lemma~1]{shalev2013stochastic}.
\begin{lemma}
    Assume that each $\phi_i$ is $\tilde{L}_i$-smooth and convex. Then, for every $w$
    \begin{align}\label{eq: strong ineq}
     \frac{1}{\tilde{L}}\Big(\frac{1}{n}\sum_{i=1}^n\norm{\phi'_i(x_i^Tw) - \phi'_i(x_i^Tw^*)}^2\Big)
     &\leq\frac{1}{n}\sum_{i=1}^n \frac{1}{\tilde{L}_i}\norm{\phi'_i(w^Tx_i) - \phi'_i(x_i^Tw^*)}^2\nonumber\\
     &\leq 2\left(P(w)-P(w^*)- \tfrac{\lambda}{2}\norm{w-w^*}^2\right).
\end{align}
\end{lemma}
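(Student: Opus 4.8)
The first inequality is immediate from \eqref{eq:LLtilde}: since $\tilde L=\max_{1\le i\le n}\tilde L_i\ge\tilde L_i$, we have $1/\tilde L\le 1/\tilde L_i$ for each $i$, so the bound already holds termwise and therefore survives averaging over $i\in[n]$.

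For the second inequality the plan is to rewrite $2\bigl(P(w)-P(w^*)-\tfrac\lambda2\norm{w-w^*}^2\bigr)$ as twice an average of one-dimensional Bregman divergences of the $\phi_i$'s, and then apply the standard smoothness/convexity estimate. First I would expand using the definition \eqref{Prob: L2EMR} of $P$ and collapse the quadratic terms via the identity $\norm{w}^2-\norm{w^*}^2-\norm{w-w^*}^2=2\la w-w^*,w^*\ra$, obtaining
\[
P(w)-P(w^*)-\tfrac\lambda2\norm{w-w^*}^2
=\tfrac1n\sum_{i=1}^n\bigl(\phi_i(x_i^Tw)-\phi_i(x_i^Tw^*)\bigr)+\lambda\la w-w^*,w^*\ra .
\]
Next I would invoke the first-order optimality condition $\nabla P(w^*)=0$, which (as recalled at the start of the appendix) gives $\lambda w^*=-\tfrac1n\sum_{i=1}^n\phi_i'(x_i^Tw^*)x_i$; substituting this into the inner product converts $\lambda\la w-w^*,w^*\ra$ into $-\tfrac1n\sum_{i=1}^n\phi_i'(x_i^Tw^*)(x_i^Tw-x_i^Tw^*)$, so that
\[
P(w)-P(w^*)-\tfrac\lambda2\norm{w-w^*}^2
=\tfrac1n\sum_{i=1}^n\Bigl(\phi_i(x_i^Tw)-\phi_i(x_i^Tw^*)-\phi_i'(x_i^Tw^*)(x_i^Tw-x_i^Tw^*)\Bigr).
\]

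Now each summand is exactly the Bregman divergence of the convex, $\tilde L_i$-smooth univariate function $\phi_i$ between the points $\beta=x_i^Tw$ and $\beta^*=x_i^Tw^*$. The key ingredient is then the classical inequality that, for convex $L$-smooth $f:\R\to\R$, one has $f(\beta)-f(\beta^*)-f'(\beta^*)(\beta-\beta^*)\ge\tfrac1{2L}\bigl(f'(\beta)-f'(\beta^*)\bigr)^2$; applied with $f=\phi_i$ and $L=\tilde L_i$, then summed over $i$, averaged, and multiplied by $2$, it yields precisely the claimed upper bound. If desired, this auxiliary scalar inequality can itself be proved in a couple of lines (by minimizing the smooth quadratic majorant of $f$), or simply imported from \cite{csiba2015primal,shalev2013stochastic}.

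The only point that really requires care — the main obstacle — is the bookkeeping of the chain rule: $\phi_i$ lives on $\R$ but enters $P$ through the composition $w\mapsto\phi_i(x_i^Tw)$, so one must verify that the gradient of $w\mapsto\phi_i(x_i^Tw)$ is $\phi_i'(x_i^Tw)x_i$ and, correspondingly, that the Bregman divergence of the composite in $w$ equals the one-dimensional Bregman divergence of $\phi_i$ in the scalar argument $x_i^Tw$ (this is where the $\tilde L_i$-smoothness of $\phi_i$ stated in the Assumptions paragraph is used, rather than the derived $L_i$-smoothness). Once this identification is made correctly everything reduces to the scalar estimate above, and the remaining algebra is routine.
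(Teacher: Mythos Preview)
Your proposal is correct and follows essentially the same route as the paper: both reduce the right-hand side to an average of one-dimensional Bregman divergences of the $\phi_i$'s via $\nabla P(w^*)=0$, and both use the scalar estimate $\phi_i(\beta)-\phi_i(\beta^*)-\phi_i'(\beta^*)(\beta-\beta^*)\ge\tfrac{1}{2\tilde L_i}(\phi_i'(\beta)-\phi_i'(\beta^*))^2$ (the paper derives it explicitly by defining $g_i$ and invoking self-boundedness, whereas you cite it as the classical smoothness/convexity inequality). The only difference is the order of the two steps, which is immaterial.
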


\begin{proof}
    Let $z, z^* \in \R$. Define
    \begin{equation}\label{eq:gidef}
        g_i(z) \eqdef \phi_i(z) - \phi_i(z^*) - \phi'_i(z^*)(z-z^*).
    \end{equation}
Because $\phi_i$ is $\tilde{L}_i$-smooth, so too is $g_i$, which implies that for all $z, \hat z \in \R$,
   \begin{equation}\label{giLipsch}
       g_i(z)  \leq  g_i(\hat{z}) + g'_i(\hat{z})(z-\hat{z}) + \tfrac{\tilde{L}_i}{2}(z-\hat{z})^2.
   \end{equation}
By convexity of $\phi_i$, $g_i$ is nonnegative, i.e., $g_i(z)\geq 0$ for all $z$. Hence, by non-negativity and smoothness $g_i$ is self-bounded (see Section 12.1.3 in \cite{Shalev14} or set $z = \hat{z} - \frac1{\tilde L_i}g'_i(\hat z)$ in \eqref{giLipsch} and rearrange):
    \begin{equation}\label{eq:gibounded}
       \norm{g'_i(z)}^2 \leq 2 \tilde{L}_i g_i(z), \qquad \forall z.
   \end{equation}
Differentiating \eqref{eq:gidef} w.r.t. $z$ and combining the result with \eqref{eq:gibounded}, used with $z = x_i^Tw$ and $z^* = x_i^Tw^*$, gives
   \begin{equation}\label{eq:gieq}
       \norm{\phi'_i(x_i^Tw) - \phi'_i(x_i^Tw^*)}^2 = \norm{g'_i(x_i^Tw)}^2 \leq 2\tilde{L}_ig_i(x_i^Tw).
   \end{equation}
Multiplying \eqref{eq:gieq} through by $1/(n\tilde L_i)$ and summing over $i\in [n]$ shows that
   \begin{align*}
       &\frac{1}{n}\sum_{i=1}^n \frac{1}{\tilde{L}_i}\norm{\phi'_i(x_i^Tw) - \phi'_i(x_i^Tw^*)}^2 \leq \frac{2}{n}\sum_{i=1}^ng_i(x_i^Tw)\\
       = ~&\frac{2}{n}\sum_{i=1}^n\phi_i(x_i^Tw) - \phi_i(x_i^Tw^*) - \phi'_i(x_i^Tw^*)(x_i^Tw-x_i^Tw^*)\\
       = ~& 2\left(P(w)-\tfrac{\lambda}{2}\norm{w}^2-P(w^*) + \tfrac{\lambda}{2}\norm{w^*}^2 - \lambda (w^*)^T(w-w^*)\right)\\
       = ~&2\left(P(w)-P(w^*)- \tfrac{\lambda}{2}\norm{w-w^*}^2\right),
   \end{align*}
 where we have used the fact that $\E[\nabla P(w^*)] = \E[\phi'(x_i^Tw^*) x_i + \lambda w^*] = 0.$
The first inequality follows because $\tilde{L} = \max_i \tilde{L}_i$.\qed
\end{proof}

\subsection{Proof of Lemmas \ref{thm: seperate convex} and \ref{thm: average convex}}

\begin{proof}[Proof of Lemma \ref{thm: seperate convex}]
In this case it is assumed that every loss function is convex and we set $\gamma= \lambda \tilde{L}$ \eqref{eq:gamma}.
For convenience, define the following quantities:
\begin{equation}\label{eq:C1}
  \mathbf{C_1} \eqdef \frac{\theta}{n}\sum_{i=1}^n(u_i^{(t)}-\alpha_i^*)^2-\tfrac{2 \gamma\theta}{\lambda}\nabla P(w^{(t)})^T(w^{(t)}-w^*)
\end{equation}
\begin{equation}\label{eq:C2}
  \mathbf{C_2} \eqdef \sum_{i=1}^n\Big(-\tfrac{\theta}{n}(1- \tfrac{\theta}{p_i})+\tfrac{\theta^2v_i \gamma}{n^2 \lambda^2 p_i}\Big)(\kappa_i^{(t)})^2
\end{equation}
Recall that $A\ttt$, $B\ttt$ and $D\ttt$ are defined in \eqref{eq:AB} and \eqref{eq:D}, respectively, and $\gamma$ is defined in \eqref{eq:gamma}. Then,
\begin{align}\label{eq:lem12intermediate}
    \E[D^{(t+1)}|\alpha\ttt]-D^{(t)}
    = ~~&\E[A^{(t+1)}-A^{(t)}|\alpha\ttt] + \gamma\E[B^{(t+1)}-B^{(t)}|\alpha\ttt]\notag\\
    \overset{\mathclap{\eqref{eq: E[A]}, \eqref{eq: E[B]}}}{=} ~~&- \theta A^{(t)}+ \tfrac{\theta}{n}\sum_{i=1}^n\left((u_i^{(t)}-\alpha_i^*)^2 - \Big(1- \tfrac{\theta}{p_i}\Big)(\kappa_i^{(t)})^2\right)\notag\\
    &+ \gamma \bigg(-\tfrac{2 \theta }{\lambda}\nabla P(w^{(t)})^T(w^{(t)}-w^*) + \sum_{i=1}^n \frac{\theta^2v_i}{n^2 \lambda^2p_i}(\kappa^{(t)}_i)^2\bigg)\notag\\
    \overset{\mathclap{\eqref{eq:C1},\eqref{eq:C2}}}{\leq} ~~& -\theta A^{(t)} + \mathbf{C_1} + \mathbf{C_2}.
\end{align}

Now,
\begin{eqnarray}\label{eq:intresult}
    \mathbf{C_1}&\overset{\eqref{eq:C1}}{=}&\frac{\theta}{n}\sum_{i=1}^n(u_i^{(t)}-\alpha_i^*)^2-\tfrac{2 \gamma\theta}{\lambda}\nabla P(w^{(t)})^T(w^{(t)}-w^*)\notag\\
    &\overset{\mathclap{(\ref{eq: strong ineq})}}{\leq} &2\theta\tilde{L}\left(P(w^{(t)})-P(w^*)-\tfrac{\lambda}{2}\norm{w^{(t)}-w^*}^2\right) - \tfrac{2 \gamma\theta}{\lambda}\nabla P(w^{(t)})^T(w^{(t)}-w^*)\notag\\
    &\overset{\mathclap{\gamma = \lambda \tilde{L}}}{=}&-\gamma \theta\norm{w^{(t)}-w^*}^2 + 2\theta\tilde{L}\left(P(w^{(t)})-P(w^*)-\nabla P(w^{(t)})^T(w^{(t)}-w^*)\right)\notag\\
    &\leq& -\gamma\theta\norm{w^{(t)}-w^*}^2,
\end{eqnarray}
where the last inequality follows from convexity of $P(w)$, i.e., $P(w^{(t)})-P(w^*) \leq \nabla P(w^{(t)})^T(w^{(t)}-w^*)$. Combining \eqref{eq:lem12intermediate} and \eqref{eq:intresult} gives
\begin{eqnarray*}
    \E[D^{(t+1)}|\alpha\ttt]-D^{(t)} \leq -\theta A^{(t)} -\gamma\theta\norm{w^{(t)}-w^*}^2 + \mathbf{C_2} = -\theta D\ttt+ \mathbf{C_2}.
\end{eqnarray*}
Rearranging gives the result.\qed
\end{proof}

\begin{proof}[Proof of Lemma~\ref{thm: average convex}]
For this result we assume that the average of the loss functions $\frac1n\sum \phi_i(\cdot)$ is convex. Note that one can define parameters $ \mathbf{\bar C_1}$ and $\mathbf{\bar C_2}$ that are analogous to $\mathbf{C_1}$ and $\mathbf{C_2}$ in \eqref{eq:C1} and \eqref{eq:C2} but with $\gamma$ replaced by $\bar\gamma$. Then, the same arguments as those used in \eqref{eq:lem12intermediate} can be used to show that
\begin{equation}\label{eq:Ddiffbar}
  \E[\bar D^{(t+1)}|\alpha\ttt]-\bar D^{(t)}  \leq -\theta A^{(t)} + \mathbf{\bar C_1} + \mathbf{\bar C_2}.
\end{equation}
Now, note that by Lipschitz continuity of $\phi'(\cdot)$ one has
\begin{equation}\label{eq:Lipschitz}
    (u_i^{(t)}-\alpha_i^*)^2 = \left(\phi'_i(x_i^Tw)-\phi'_i(x_i^Tw^{(t)})\right)^2 \leq L_i^2\norm{w^*-w^{(t)}}^2.
\end{equation}

Further, since the average of the losses is convex, $P(w)$ is strongly convex, so
\begin{equation} \label{eq: strong convexity}
    P(w^*) - P(w^{(t)}) \geq \nabla P(w^{(t)})^T(w^*-w^{(t)}) + \tfrac{\lambda}{2}\norm{w^*-w^{(t)}}^2
\end{equation}
and since $w^*$ is the minimizer
\begin{equation}\label{eq: minimzer}
    P(w^{t}) - P(w^*) \geq \tfrac{\lambda}2\norm{w^{(t)}-w^*}^2.
\end{equation}
Now, adding \eqref{eq: strong convexity} and \eqref{eq: minimzer} gives
\begin{equation}\label{eq:gradPnormw}
    \nabla P(w^{(t)})^T(w^{(t)}-w^*) \geq \lambda\norm{w^{(t)}-w^*}^2.
\end{equation}
Therefore,
\begin{align}\label{eq:barC1vsw}
    \mathbf{\bar C_1}=~~&\frac{\theta}{n}\sum_{i=1}^n(u_i^{(t)}-\alpha_i^*)^2-\frac{2 \bar\gamma\theta}{\lambda}\nabla P(w^{(t)})^T(w^{(t)}-w^*)\notag\\
    \overset{\eqref{eq:Lipschitz},\eqref{eq:gradPnormw}}{\leq} ~~& \frac{\theta}{n}\sum_{i=1}^nL_i^2\norm{w^{(t)}-w^*}^2- 2\bar\gamma \theta\norm{w^{(t)}-w^*}^2\notag\\
    \overset{\eqref{eq:bargamma}}{\leq}~~& -\bar\gamma\theta\norm{w^{(t)}-w^*}^2.
\end{align}

Thus, from \eqref{eq:Ddiffbar} and \eqref{eq:barC1vsw} we have that $\E[D^{(t+1)}|\alpha\ttt]-D\ttt  \leq -\theta D^{(t)} + {\mathbf{\bar C_2}}$, which is the desired result.\qed
\end{proof}

\subsection{Proof of Lemma \ref{lem: optimal probabilities}}
\begin{proof}
    This is easy to verify by derive KKT conditions of optimization problem \eqref{Prob: to derive best probabilities}, which is
    \begin{equation}
    \begin{cases}
             -(n\lambda^2\sum_{i\in I_ \kappa}\kappa_i^2)(\sum_{i\in I_ \kappa}(n\lambda^2+v_i\gamma)p_i^{-1}\kappa_i^2)^{-2}(-(n\lambda^2+v_i\gamma)p_i^{-2}\kappa_i^2) + \mu &= 0 \notag\\
             \sum_{i\in I_k}p_i &= 1\notag\\
    \end{cases}
    \end{equation}

    where $\mu$ is the Lagrange multiplier.

    Therefore, we have
    \begin{equation}
        \frac{p_i}{p_j}= \frac{\sqrt{n\lambda^2+v_i\gamma}|\kappa_i|}{\sqrt{n\lambda^2+v_j\gamma}|\kappa_j|}, \mbox{  for all  } i, j\in I_\kappa.
    \end{equation}

    Considering $\sum_{i\in I_\kappa}p_i=1$, we show that the optimal probabilities \eqref{eq: optimal probabilities}. \eqref{eq: optimal theta} can be further derived by combine \eqref{eq: optimal probabilities} and \eqref{Prob: to derive best probabilities}.
\end{proof}

\subsection{Proof of Theorems~\ref{cor: all convex} and \ref{cor: aver-convex}}
\begin{proof}[Proof of Theorem
~\ref{cor: all convex}]
Note that substituting $p^*$ (where $p^*$ is defined in Lemma~\ref{lem: optimal probabilities}) into $\Theta(\kappa, p^*)$ in \eqref{eq:Theta} and using the Cauchy-Schwartz inequality, gives
\begin{align}
    \Theta(\kappa, p^*) =  \frac{n \lambda^2 \sum_{i\in I_ \kappa} \kappa_i^2}{(\sum_{i\in I_ \kappa}\sqrt{v_i \gamma + n \lambda^2}|\kappa_i|)^2}
    =\frac{n \lambda^2 \sum_{i=1}^n \kappa_i^2}{(\sum_{i=1}^n\sqrt{v_i \gamma + n \lambda^2}|\kappa_i|)^2} \geq \frac{n \lambda^2}{\sum_{i=1}^n(v_i \gamma + n \lambda^2)} \overset{\eqref{eq:thetastar}}{=} \theta^*.
\end{align}
The above confirms that $\theta^*$ in \eqref{eq:Theta} is a (constant) global lower bound of $\Theta(\kappa, p^*)$ at every iteration.
Thus, using the arguments following Lemma~\ref{thm: seperate convex}, setting $p^{(t)} = p^*$ (as computed in Lemma~\ref{lem: optimal probabilities}) at each iteration gives
    \begin{equation}\label{eq:Ddiffthetastar}
        \E\left[D^{(t+1)}|\alpha\ttt\right] \leq (1- \theta^*)D\ttt.
    \end{equation}
    That is, \eqref{eq:Dreduce} used with $\theta\equiv \theta^*$ holds. Because \eqref{eq:Ddiffthetastar} holds at every iteration of
    Algorithm~\ref{Alg: adfSDCA}, one can show that
    \begin{equation}\label{eq:DC0}
    \E\left[D\ttt\right] \leq (1- \theta^*)^tC_0 \leq e^{- \theta^*t}C_0,
\end{equation}
where $C_0$ is defined in \eqref{eq:C0}. Now, note that $P(w)$ is $(L+\lambda)$-smooth, i.e.,  $P(w) - P(w^*) \leq \tfrac{\lambda + L}{2}\norm{w-w^*}^2$, so
\begin{eqnarray*}
  D\ttt = \tfrac1n \|\alpha\ttt - \alpha^*\|^2 + \gamma\|w\ttt - w^*\|^2
  \geq \gamma\|w\ttt - w^*\|^2
  \geq \tfrac{2\gamma}{\lambda+L}(P(w\ttt) - P(w^*)).
\end{eqnarray*}
This means that we must find $T$ for which
\begin{equation}\label{eq:PdiffC0}
 \E[P(w^{(T)}) - P(w^*)]\leq \tfrac{\lambda+L}{2\gamma}e^{- \theta^*T}C_0 \leq \epsilon.
\end{equation}
Subsequently, the expression for $T$ in \eqref{eq:Tconvex} is obtained by multiplying through by $e^{\theta^*T}/\epsilon$, taking natural logs, rearranging and noting that
\begin{equation*}
  \frac1{\theta^*} = \frac{\sum_{i=1}^n (v_i \gamma + n\lambda^2)}{n\lambda^2} = n + \frac{\gamma}{n\lambda^2}\sum_{i=1}^n v_i \overset{\eqref{eq:gamma}}{=} n + \frac{\tilde L}{n\lambda}\sum_{i=1}^n v_i\overset{\eqref{eq:MQ}}{=} n + \frac{\tilde L Q}{\lambda}.
\end{equation*}
\end{proof}

\begin{proof}[Proof of Theorem~\ref{cor: aver-convex}]
Here we assume that the average loss $\frac1n\sum_{i=1}^n\phi_i(\cdot)$ is convex, but that individual loss functions $\phi_i(\cdot)$ may not be. The proof of this result is almost identical to the proof of Theorem~\ref{cor: all convex}, but with the parameters defined in Section~\ref{sec:analysisCaseII}. Similarly to \eqref{eq:PdiffC0} we must find $T$ for which
\begin{equation}\label{eq:barPdiffC0}
 \E[P(w^{(T)}) - P(w^*)]\leq \tfrac{\lambda+L}{2\bar\gamma}e^{- \theta^*T}\bar C_0 \leq \epsilon,
\end{equation}
where $\bar\gamma = \frac{1}{n}\sum_{i=1}^nL_i^2$ is defined in \eqref{eq:bargamma} and $\bar C_0$ is defined in \eqref{eq:barC0}. The expression $T$ in \eqref{eq:Tavconvex} is obtained by multiplying through by $e^{\theta^*T}/\epsilon$, taking natural logs, rearranging and noting that
\begin{equation*}
  \frac1{\theta^*} = \frac{\sum_{i=1}^n (v_i \bar\gamma + n\lambda^2)}{n\lambda^2} = n + \frac{\bar\gamma}{\lambda^2}\Big(\frac1n\sum_{i=1}^n v_i\Big)\overset{\eqref{eq:MQ}}{=} n + \frac{\bar\gamma Q}{\lambda^2}.
\end{equation*}
\qed
\end{proof}

\subsection{Proof of Corollary \ref{variance reduction}}

\begin{proof}
    Recall that $w^*$ denotes the minimizer of \eqref{Prob: L2EMR} and $\alpha_i^* = -\phi'(x_i^Tw^*)$. Let Assumption~\ref{ass: smooth} hold. Then

    \begin{eqnarray}\label{cor:takeexp}
         \Big\|\frac{1}{np_i}\kappa_i^{(t)}x_i\Big\|^2
       & \overset{\eqref{eq:viQ}}{=}  &\frac{1}{n^2p_i^2} (\kappa_i^{(t)})^2v_i\notag\\
         &\overset{\mathclap{\eqref{eq: optimal probabilities}}}{=}  & \frac{1}{n^2}\left(\frac{\sum_{j\in I_ \kappa}\sqrt{n\lambda^2+v_j\gamma}|\kappa_j^{(t)}|}{\sqrt{n\lambda^2+v_i\gamma}|\kappa_i^{(t)}|}\right) ^2(\kappa_i^{(t)})^2v_i\notag\\
       &\overset{\rm(CS)}{\leq} &\frac{1}{n^2}\frac{\sum_{j=1}^n(n\lambda^2+v_j\gamma)\sum_{j=1}^n(\kappa_j^{(t)})^2}{(n\lambda^2+v_i\gamma)(\kappa_i^{(t)})^2} (\kappa_i^{(t)})^2v_i\notag\\
       &  =  &\frac{\sum_{j=1}^n(n\lambda^2+v_j\gamma)}{n^2(n\lambda^2+v_i\gamma)}\norm{\kappa^{(t)}}^2v_i \notag\\
       & \overset{\eqref{eq:MQ}}{=}  &\frac{n^2\lambda^2+\gamma n Q}{n^2(n\lambda^2+v_i\gamma)}\norm{\kappa^{(t)}}^2v_i.
    \end{eqnarray}
Taking the (conditional) expectation of \eqref{cor:takeexp} gives

\begin{eqnarray}\label{eq:expof}
   \E\left[ \Big\|\frac{1}{np_i} \vc{\kappa_i}{t}x_i\Big\|^2 \,\Big|\alpha^{(t-1)}\right]
    &=  &\sum_{i=1}^np_i\left(\frac{n^2\lambda^2+\gamma n Q}{n^2(n\lambda^2+v_i\gamma)}\norm{\kappa^{(t)}}^2v_i\right)\notag\\
    &\leq & \sum_{i=1}^n\left(\frac{n^2\lambda^2+\gamma n Q}{n^2(n\lambda^2+v_i\gamma)}\norm{\kappa^{(t)}}^2v_i\right)\notag\\
    &\leq & \sum_{i=1}^n\left(\frac{n^2\lambda^2+\gamma n Q}{n^3\lambda^2}\norm{\kappa^{(t)}}^2v_i\right)\notag\\
    &=&\left(\frac{n^2\lambda^2+\gamma n Q}{n^2\lambda^2}\norm{\kappa^{(t)}}^2\right)\left(\frac{1}{n}\sum_{i=1}^nv_i\right)\notag\\
    &\overset{\eqref{eq:MQ}}{=} &Q\left(1+\frac{\gamma Q}{n \lambda^2}\right) \norm{\kappa^{(t)}}^2.
\end{eqnarray}

Finally
\begin{align*}
    \norm{\kappa^{(t)}}^2 = \E\left[\norm{\kappa^{(t)}}^2|\alpha^{(t-1)}\right]
    =~~& \E\Big[\sum_{i=1}^n\Big(\alpha^{(t)}_i + \phi'_i(x_i^Tw^{(t)})\Big)^2|\alpha^{(t-1)}\Big]\notag\\
    =~~& \E\Big[\sum_{i=1}^n\Big(\alpha^{(t)}_i - \alpha^* - \phi'_i(x_i^Tw^*)+ \phi'_i(x_i^Tw^{(t)})\Big)^2|\alpha^{(t-1)}\Big]\notag\\
     \leq~~& 2\E[\norm{\alpha^{(t)}-\alpha^*}^2|\alpha^{(t-1)}] + 2L\E[\norm{w^{(t)}-w^*}^2|\alpha^{(t-1)}].
\end{align*}
Combining the last step with \eqref{eq:expof} gives the result. \qed
\end{proof}

The proof of Corollary~\ref{variance reduction_nonconvex} is essentially identical, but with the notation established in Section~\ref{sec:analysisCaseII}, so we omit it for brevity.

\subsection{Proof of Theorems \ref{thm: minibatch-all convex} and \ref{thm: minibatch-seprate convex}}

Recall that $A^{(t)}$ and $B^{(t)}$ are defined in \eqref{eq:AB}. To prove Theorem \ref{thm: minibatch-all convex} we need the following two conditions to hold,
    \begin{align}
                \E_{\hat{S}}\left[A^{(t+1)} - A^{(t)}|\alpha\ttt\right] =& -\theta A^{(t)} + \frac{\theta}{n}\sum_{i=1}^n\left((u_i^{(t)}-\alpha_i^*)^2 - \left(1- \tfrac{\theta}{bp_i}\right)(\kappa^{(t)}_i)^2\right),\label{eq: ES[A]}\\
        \E_{\hat{S}}\left[B^{(t+1)} - B^{(t)}|\alpha\ttt\right]  \leq& -\frac{2 \theta }{\lambda}\nabla P(w^{(t)})^T(w^{(t)}-w^*) + \sum_{i=1}^n \frac{\theta^2v_i(\kappa^{(t)}_i)^2}{n^2 \lambda^2bp_i}.\label{eq: ES[B]}
    \end{align}
Note that $\E_{\hat{S}}\left[A^{(t+1)}-A^{(t)}|\alpha\ttt\right] = \sum_{i=1}^nbp_i(A^{(t+1)}-A^{(t)})$, and so \eqref{eq: ES[A]} is obtained by using arguments similar to those used in the proof of \eqref{eq: E[A]}. To show \eqref{eq: ES[B]}, first we have
    \begin{align}
    B^{(t+1)}-B^{(t)} &=  \norm{w^{(t+1)}-w^*}^2 - \norm{w^{(t)}-w^*}^2\notag\\
                            &= \norm{w^{(t)} - \sum_{i\in S}\frac{\theta}{n\lambda bp_i}\kappa_i^{(t)}x_i^T -w^*}^2 - \norm{w^{(t)}-w^*}^2\notag\\
                            & = -\frac{2 \theta}{n\lambda}\sum_{i\in S}\frac{\kappa^{(t)}_i}{bp_i}x_i^T(w^{(t)}-w^*) + \frac{\theta^2}{n^2\lambda ^2}\norm{\sum_{i\in S}\frac{\kappa_i^{(t)}}{bp_i}x_i}^2.
    \end{align}

    Therefore, we have
    \begin{align}
        \E_{\hat{S}}\left[B^{(t+1)}-B^{(t)}|\alpha\ttt\right] &= \E_{\hat{S}}\bigg[-\frac{2 \theta}{n\lambda}\sum_{i\in S}\frac{\kappa^{(t)}_i}{bp_i}x_i^T(w^{(t)}-w^*) + \frac{\theta^2}{n^2\lambda ^2}\norm{\sum_{i\in S}\frac{\kappa_i^{(t)}}{bp_i}x_i}^2|\alpha\ttt\bigg]\notag\\
        & = -\frac{2 \theta}{n \lambda}\sum_{i=1}^n\kappa_i^{(t)}x_i^T(w^{(t)}-w^*) + \frac{\theta^2}{n^2 \lambda^2}\E_{\hat{S}}\norm{\sum_{i\in S}\frac{\kappa_i^{(t)}}{bp_i}x_i}^2.
    \end{align}

    Note that from Section \ref{sec:eso} we have
    \begin{equation}\label{eq: theorem5}
        \E_{\hat{S}}\norm{\sum_{i\in S}\frac{\kappa_i^{(t)}}{bp_i}x_i}^2 \leq \sum_{i=1}^nbp_iv_i'\bigg(\frac{\kappa_i^{(i)}}{bp_i}\bigg)^2 = \sum_{i=1}^n \frac{v_i' (\kappa_i^{(t)})^2}{bp_i},
    \end{equation}

    where $v_i'$ is defined in \eqref{ESO, v}. We can then derive \eqref{eq: ES[B]} by using \eqref{eq: theorem5} and $\nabla P(w^{(t)}) =\frac{1}{n}\sum_{i=1}^n \kappa_i^{(t)}x_i$.

\begin{proof}[Proof of Theorem \ref{thm: minibatch-all convex}]
Define
\begin{equation}\label{eq:C}
  \mathbf{C}(\theta, p^{(t)}, \kappa^{(t)} ) \eqdef \sum_{i=1}^n\left(-\frac{\theta}{n}\Big(1- \frac{\theta}{bp_i}\Big)+\frac{\theta^2v_i'\gamma}{n^2\lambda^2bp_i}\right)(\kappa_i^{(t)})^2.
\end{equation}
Then
    \begin{eqnarray}
        \E_{\hat{S}}[D^{(t+1)} - D^{(t)}|\alpha\ttt] &=& \E_{\hat{S}}[A^{(t+1)}-A^{(t)}|\alpha\ttt] + \gamma \E_{\hat{S}}[B^{(t+1)}-B^{(t)}|\alpha\ttt]\notag\\
        &\overset{\mathclap{\eqref{eq: ES[A]}, \eqref{eq: ES[B]}}}{\leq}~~&  -\theta A^{(t)} + \frac{\theta}{n}\sum_{i=1}^n\left(\Big(u_i^{(t)}-\alpha_i^*)^2 - (1- \frac{\theta}{bp_i}\Big)(\kappa^{(t)}_i)^2\right)\notag\\
        && \qquad + \gamma \bigg(-\frac{2 \theta }{\lambda}\nabla P(w^{(t)})^T(w^{(t)}-w^*) + \sum_{i=1}^n \frac{\theta^2v_i'(\kappa^{(t)}_i)^2}{n^2 \lambda^2bp_i}\bigg)\notag\\
        &\overset{\mathclap{\eqref{eq:intresult}}}{\leq}& -\theta A^{(t)} - \theta\gamma\norm{w^{(t)}-w^*}^2) + \mathbf{C}(\theta, p^{(t)}, \kappa^{(t)})\notag\\
        &=&-\theta D^{(t)} + \mathbf{C}(\theta, p^{(t)}, \kappa^{(t)} ).
    \end{eqnarray}

    We can then derive the optimal probabilities to ensure that $\mathbf{C}(\theta, p^{(t)}, \kappa^{(t)} )\leq 0$, i.e.,
    \begin{equation}\label{eq: optimal batch theta}
        \theta \leq \Theta( p^{(t)}, \kappa^{(t)}) := \frac{n\lambda^2b \sum_{i\in I_(\kappa^{(t)})} (\kappa_i^{(t)})^2}{\sum_{i\in I_{\kappa^{(t)}}}(n\lambda^2+v_i\gamma)(p_i^{(t)})^{-1}(\kappa_i^{(t)})^2}
    \end{equation}

    and then making $\theta$ as large as possible. Indeed, to have largest $\theta$ we arrive at the same optimal probabilities as in Lemma \ref{lem: optimal probabilities}. Using these optimal probabilities we find a fixed $\theta^*$ such that
    \begin{equation}
         \theta^* \eqdef \frac{n\lambda^2b}{\sum_{i=1}^n(n\lambda^2+v_i\gamma)}.
     \end{equation}
    Furthermore, the complexity result in this mini-batch setting follows: $\E[P(w^{t})-P(w^*)]\leq \epsilon$ holds if
    \begin{equation}
        T\geq \bigg(\frac{n}{b} + \frac{\tilde{L}Q'}{b\lambda}\bigg)\log\bigg(\frac{(\lambda +L)C_0}{\lambda \tilde{L} \epsilon}\bigg).
    \end{equation}
\end{proof}

\begin{proof}[Proof of Theorem~\ref{thm: minibatch-seprate convex}]
Define
\begin{equation}\label{eq:Cbar}
  \mathbf{\bar C}(\theta, p^{(t)}, \kappa^{(t)} ) \eqdef \sum_{i=1}^n\left(-\frac{\theta}{n}\Big(1- \frac{\theta}{bp_i}\Big)+\frac{\theta^2v_i'\bar\gamma}{n^2\lambda^2bp_i}\right)(\kappa_i^{(t)})^2.
\end{equation}
Now
    \begin{eqnarray}
        \E_{\hat{S}}[\bar D^{(t+1)} - \bar D^{(t)}|\alpha\ttt] &=& \E_{\hat{S}}[A^{(t+1)}-A^{(t)}|\alpha\ttt] + \bar \gamma \E_{\hat{S}}[B^{(t+1)}-B^{(t)}|\alpha\ttt]\notag\\
        &\overset{\mathclap{\eqref{eq: ES[A]}, \eqref{eq: ES[B]}}}{\leq}~~&  -\theta A^{(t)} + \frac{\theta}{n}\sum_{i=1}^n\left(\Big(u_i^{(t)}-\alpha_i^*)^2 - (1- \frac{\theta}{bp_i}\Big)(\kappa^{(t)}_i)^2\right)\notag\\
        && \qquad + \bar \gamma \bigg(-\frac{2 \theta }{\lambda}\nabla P(w^{(t)})^T(w^{(t)}-w^*) + \sum_{i=1}^n \frac{\theta^2v_i'(\kappa^{(t)}_i)^2}{n^2 \lambda^2bp_i}\bigg)\notag\\
        &\overset{\mathclap{\eqref{eq:barC1vsw}}}{\leq}& -\theta A^{(t)} - \theta\bar\gamma\norm{w^{(t)}-w^*}^2) + \mathbf{\bar C}(\theta, p^{(t)}, \kappa^{(t)})\notag\\
        &=&-\theta D^{(t)} + \mathbf{\bar C}(\theta, p^{(t)}, \kappa^{(t)} ).
    \end{eqnarray}
Similar arguments to those made in the final stages of the proof of Theorem~\ref{thm: minibatch-seprate convex} can be used to show that if $T$ is given by the expression in \eqref{eq:minibatchnonconvexT} then $\E[P(w^{t})-P(w^*)]\leq \epsilon$. \qed
\end{proof}

\end{document}